\newcommand{\E}{\mathbb{E}}
\newcommand{\N}{\mathbb{N}}
\newcommand{\R}{\mathbb{R}}
\newcommand{\Z}{\mathbb{Z}}
\newcommand{\T}{\mathbb{T}}
\newcommand{\un}{\mathbbm{1}}
\newcommand{\PP}{\mathbb{P}}
\newcommand{\bP}{\textbf{P}}
\newcommand{\Q}{\mathbb{Q}}
\renewcommand{\P}{\mathbb{P}}
\newcommand{\Prob}{\mathbb{P}}
\newcommand{\FEP}{\textsc{fep}}
\newcommand{\FZR}{\textsc{fzr}}
\newcommand{\SWT}{\textsc{swt}}
\newcommand{\SSEP}{\textsc{ssep}}
\newcommand{\abs}[1]{\left\vert#1 \right\vert}
\newcommand{\pa}[1]{\left(#1\right)}
\newcommand{\gene}{\mathscr{L}_N}
\newcommand{\TB}{\mathcal{T}_{\mathsmaller{\mathsmaller{\mathsmaller{\mathsmaller{-}}}},N}}
\newcommand{\TG}{\mathcal{T}_{\mathsmaller{\mathsmaller{\mathsmaller{\mathsmaller{+}}}},N}}
\newcommand\ent[1]{\left\lfloor #1 \right\rfloor} 
\newcommand{\noteblue}[1]{\textcolor{MSBlue}{#1}}
\newtheorem{theorem}{Theorem}[section]
\newtheorem{lemma}[theorem]{Lemma}
\newtheorem{proposition}[theorem]{Proposition}
\newtheorem{corollary}[theorem]{Corollary}
\newtheorem{remark}{Remark}
\definecolor{MSBlue}{rgb}{.15,.0.35,.85}
\newcommand{\eqs}[1]{\begin{equation}#1\end{equation}}
\newcommand{\eq}[2]{\begin{equation}\label{#1}#2\end{equation}}
\title{Cutoff for the transience and mixing time of a SSEP with traps and consequences on the FEP}
\author{Cl\'ement Erignoux, Brune Massoulié}
\date{}
\begin{document}

\maketitle


\begin{abstract}
We introduce a new particle system that we call the SSEP with traps, which is non reversible, attractive, and has a transient regime.
We study its \emph{transience time} $\theta_K$, meaning the time after which the system is no longer in a transient state 
with high probability, on the ring with $K$ sites.  We first show that $\theta_K$ is of order $K^2 \log K$ for a system of size $K$, and more precisely that it exhibits a cutoff at time $\frac{1}{\pi^2} K^2 \log K$. We then show that its \emph{mixing time} also undergoes cutoff at the same time.

We further define a new mapping between the SSEP with traps and the Facilitated Exclusion Process (FEP) which has attracted significant scrutiny in recent years \cite{BBCS18,blondel_hydrodynamic_2020,GLS21}. We expect that this mapping will be a very useful tool to study the FEP's microscopic and macroscopic behaviour. In particular, using this mapping, we show that the FEP's transience time also undergoes a cutoff at time $\frac{1}{4 \pi^2} N^2 \log N$. Notably, our results show that for a FEP with particle density strictly greater than $\frac12$, the transient component is exited in a diffusive time. This allows to extend the upper-bound from \cite{ayre_mixing_2024} for the mixing time of the FEP with particle density $\rho > 1/2$.

\end{abstract}

\section*{Introduction}

In recent years, multi-phased and multi-type interacting particle systems have attracted a lot of attention from the statistical physics community as toy models for phase separation.  Some of these models belong to the family of lattice gases, whose particles evolve and interact on a discrete lattice \cite{liggett_interacting_2004}. Under particular scrutiny in the last ten years, the Facilitated Exclusion Process (FEP, see e.g. \cite{BBCS18,blondel_hydrodynamic_2020,CZ18,GLS21}), as well as the Activated Random Walks (ARW, e.g. see \cite{cabezas_2014,LL23,Rolla20,RS11,ST17}) model, can be seen as active/inactive multi-phased lattice gases. Because they are conservative, such models do not belong to the classical \emph{Directed Percolation} universality class \cite{RPV00}.

\medskip

In one dimension, the FEP, first introduced by physicists in \cite{BM09}, is a standard exclusion process, except that particle jumps to an empty neighbouring site can only occur if the jumping particle's other neighbouring site is occupied. This constraint makes the FEP non-reversible and non-attractive. Under this dynamics, isolated particles cannot move until another particle comes around, whereas isolated empty sites remain isolated forever.  In particular, after a transience time, the FEP either freezes because all particles become isolated (which can happen at density $\rho\leq 1/2$) or becomes ergodic and diffusive if all empty sites become isolated ($\rho\geq 1/2$) \cite{blondel_hydrodynamic_2020}.  The Facilitated Exclusion Process has rich mapping features: in the periodic setting, its evolution can be paired with an attractive and degenerate Facilitated Zero-Range Process (FZR, see \cite{blondel_hydrodynamic_2020}), in which any number of particles can occupy a site, and at rate $1$, a particle jumps away if it is not alone on the site. Activated random walks (as well as the \emph{particle-hole model} introduced in \cite{cabezas_2014}) have roughly the same behaviour as the FZR, except that particles jump independently, and become inactive at a given rate, rather than instantly, when they are alone on a site. In these models, diffusion or particle displacement is \emph{facilitated} by the presence of other particles close by, so that both are roughly diffusive at high densities, and frozen at low densities. The strong local interaction of these activated/facilitated models makes the derivation of their macroscopic and microscopic features challenging. For activated random walks, the particle's discrete trajectories are independent random walks, opening up the way to multi-scale arguments \cite{RS11} to study the macroscopic spread of particles. For the FEP, this is not the case, but its local stationary states have explicit expressions charging the ergodic and frozen components, so that the classical theory of hydrodynamic limit can be adapted to obtain extensive results on its phase-separated macroscopic behaviour, both in symmetric and asymmetric cases \cite{blondel_hydrodynamic_2020,blondel_stefan_2021,ESZ23-3,EZ23}. However, obtaining sharp quantitative estimates on the transience and mixing properties of such models is by no means straightforward.

\medskip

The FEP's transience time (after which it either becomes frozen or ergodic) on the ring $\T_N$ of size $N\geq 1$ was first studied in  \cite{blondel_hydrodynamic_2020, blondel_stefan_2021}, where it is shown that starting from a product state with density $\rho\neq 1/2$, the transience time is w.h.p. at most of order $(\log N)^q$ for some explicit constant $q>0$. The case of a deterministic initial configuration was then considered in \cite[Section 4.3]{ayre_mixing_2024}, where it is shown that if in the initial configuration, a fixed number $m$ of ergodic segments contain more than $N-K$ particles, the transience time is $\mathcal{O}(N^2\log N)$. As exploited in \cite{GLS21, ayre_mixing_2024}, after its transience time, the supercritical FEP (meaning a FEP with more than $N/2$ particles) becomes ergodic, at which point it can be mapped to the standard SSEP. For this reason, estimating the FEP's transience time is a crucial step to understand its mixing time, since the SSEP's mixing time has been thoroughly studied \cite{lacoin_simple_2017}. 

\medskip

In this article, we introduce a new phase separated model, that we call SSEP with traps (SWT), in which particles evolve on $\T_K$ according to a SSEP until they fall into a trap where they become inactive. Each trap can only contain a fixed number of particles, so that traps can ultimately disappear and then behave as regular SSEP empty sites. Our SWT model is attractive, and until either all particles or all traps have disappeared, it remains in a transient state where both particles and traps coexist. We are interested in the transient properties of the SWT, and show that its transience time undergoes a cutoff at time $t_K^\star:=K^2\log K/\pi^2$. This means that the probability $p_\xi(t)$ for the SWT to remain transient up to time $t$, starting from the worst possible transient configuration $\xi$, goes from $0$ to $1$ in a sharp window of size $\delta_K\ll t_K^\star$ around $t_K^\star$.  Our proofs rely on delicate coupling arguments between the \emph{periodic} SWT and the \emph{non-periodic} SSEP in contact with empty reservoirs. Note that, unlike the latter, the SWT does not preserve negative dependence (see Appendix \ref{subsec:noND}), so that arguments in the spirit of \cite{salez_universality_2022} cannot be straightforwardly adapted. However, we are able to make use of attractiveness to compare SWT with different numbers of particles and traps. 

\medskip

Once in the ergodic phase, the SSEP with traps behaves exactly like a SSEP. Because of that, we can study the SWT's mixing time by combining our  transience time estimates with SSEP mixing time estimates from \cite{lacoin_simple_2017}. 
Strikingly, we show that transience time and mixing time \emph{after the transience time} balance each other out (cf. Remark \ref{rem:mixingtransience}), in the sense that a transient supercritical SWT configuration with high transience time $t\simeq t_{K}^\star$ (e.g. a critical configuration with $K-1$ particles) will have a low mixing time $\tau=\mathcal{O}(K^2)$ once it reaches the ergodic component, and a supercritical configuration with low transience time $t=\mathcal{O}(K^2)$ (e.g. a configuration with $\delta K$ excess particles) will have a long mixing time $\tau=\mathcal{O}(K^2\log K)$ afterwards. In other words, being far from criticality shortens the transience time, but lengthens the mixing time. From this detailed study we can control the mixing time starting from any SWT configuration of size $K$ and find that the mixing time also exhibits cutoff around time $t_{K}^{\star}$.

\medskip
Then, developing a mapping between the SWT and the FEP's trajectories, we show that the transience time for the Facilitated Exclusion Process on the ring $\T_N$, also exhibits cutoff at time $t_{N/2}^\star$. This is a significant improvement on the previous estimates of the FEP's transience time \cite{ayre_mixing_2024, blondel_hydrodynamic_2020, blondel_stefan_2021}, since we lift all assumptions on the initial state, and obtain a sharp estimate, through the explicit constant $1/4\pi^2$ appearing in $t_{N/2}^\star$. 
Like the mapping to the zero-range process, this new mapping to the SWT is very useful: it also maps the FEP to an attractive process, however with the added upside that the SWT behaves in many ways similarly to the SSEP. In particular, unlike in the FZR, individual particles in the SWT behave as rate $1$ symmetric random walkers, for which very precise estimates are available.

\smallskip

Our mapping generalises to the circle the lattice path construction developed in \cite{ayre_mixing_2024} in the following sense. In \cite{ayre_mixing_2024}, a FEP on a segment is transformed into a height function that has corner-flip like dynamics, and by monotonicity it can be compared to the maximal and the minimal possible height functions: once they have joined, the height function has coupled with the stationary law. In \cite{ayre_mixing_2024}, this construction is only used on the segment, indeed this kind of argument would fail on the circle, since height functions can move up or down, and the maximal and minimal height functions don't necessarily meet. On the other hand, our mapping from the FEP to the SWT is robust enough to deal with the case of the circle. It also gives a new interpretation of the lattice path from \cite{ayre_mixing_2024} as the height function associated with the SWT.

The transience of the FEP is equivalent to the transience of the corresponding SWT. However, the mapping from the FEP to the SWT doesn't allow us to directly obtain the FEP's mixing time from the SWT's, as opposed to the transience time. Indeed, as laid out in Section \ref{sec:FEPmixing}, some information is lost as one goes through the mapping, so we can easily obtain a lower bound on the FEP's mixing time but not an upper bound.
Although we don't have an upper bound for general starting configurations, we can generalise \cite[Theorem 2.3.]{ayre_mixing_2024} and show that for all sequence of number of particles $K(N)$ such that $K(N)/N \longrightarrow \rho > \frac12$, the mixing time for all FEP configurations with $K(N)$ particles is less than $C_{\rho,\varepsilon} N^2 \log N$. Controlling the mixing time of the FEP for any initial configuration would require a more detailed study, and we leave this for future work.

\medskip

 The SWT introduced here is of deep interest in its own right, and is easily defined in more general settings (higher dimension, general lattices, any particle jump kernel). It can be seen as a generalisation of a phase-separated model introduced by Funaki \cite{Funaki99}, and is reminiscent of another trap model introduced in \cite{CFRS23}, with the important difference that particles cannot leave traps, so that our model is non-reversible. To the best of our knowledge, the cutoff phenomenon has so far been identified overwhelmingly in the context of Markov process's mixing time. It was first discovered in \cite{diaconis_generating_1981, aldous_shuffling_1986} while studying card shuffling, and the cutoff of mixing time for many Markov chains has been established since, see \cite[Chapter 18]{levin_markov_2017} for an introduction to this phenomenon. We study here a novel, and natural, instance of cutoff, which affects kinetically constrained models with a transient regime. 

\medskip

This article is organised as follows. In Section \ref{sec:swt}, we introduce the SSEP with traps model that will be studied throughout. We also present our first main results, Theorems \ref{thm:transiencetimeSSEP} and \ref{thm:cutoff_SWT}, which state that the SWT's maximal transience time is of order $\mathcal{O}(K^2\log K)$ and undergoes cutoff at time $t_K^\star$, and Theorem \ref{thm:cutoff_mix}, which states that the SWT's mixing time also exhibits cutoff at time $t_K^{\star}$. We also present some important properties of the SWT. In Section \ref{sec:fep}, we define the Facilitated Exclusion Process, translate our transience time cutoff results to the latter (Theorem \ref{thm:freezetime_cutoff_crit}) and state our upper bound on the mixing time, Proposition \ref{prop:mix_fep}. We then present our mapping between SWT and FEP, which is the main argument to obtain Theorem \ref{thm:freezetime_cutoff_crit} from  Theorem \ref{thm:cutoff_SWT}. In Section \ref{sec:proof_tran_swt}, we introduce a  labelled SWT that will be used throughout, prove with it Theorem \ref{thm:transiencetimeSSEP}, show Theorem \ref{thm:cutoff_mix} and fully estimate the SWT's mixing time in Proposition \ref{prop:t_mix_swt_all_s}. We prove the cutoff for the SWT's transience time in Section \ref{sec:cutoffSWT}, based on coupling arguments with the non periodic SSEP in contact with empty reservoirs, and fully estimate the super-critical transience time in Lemma \ref{lem:t_tran_critique}. Finally, in Section \ref{sec:proofthm2.2}, we introduce the Facilitated Zero-Range process, and use both the latter and the SWT to extend our transience time cutoff results to the FEP. 

\subsection*{Acknowledgements} The authors would like to thank Anna Benhamou, Pietro Caputo, Paul Chleboun, Justin Salez, Cristina Toninelli and Hong-Quan Tran for interesting discussions on the cutoff phenomenon.

\subsection*{Notation and conventions}
Throughout this article, we will repeatedly map processes into each other. In order to give as clear an exposition as possible, we will whenever possible  always use the same notational conventions for the three main processes we will investigate. We present those notations here for reference.
\begin{center}
\def\arraystretch{1.4}
\begin{tabular}{l|c|c|c}
Process &SSEP with traps& Facilitated Exclusion& Facilitated Zero-range\\
\hline
\hline
Acronym & \textsc{SWT} & \textsc{FEP} & \textsc{FZR} \\
Scaling parameter &$K$ & $N$& $P$ \\
Discrete variable & $k$ & $x$ & $y$ \\
Discrete configuration & $\xi=(\xi_k)_{k\in \T_K}$ & $\eta=(\eta_x)_{x\in\T_N} $ & $\omega=(\omega_y)_{y \in \T_P}$  \\
Statespace &$ \Gamma_K$ &$\Sigma_N$& $\Omega_P $\\
Law of the process & ${\bf P}^K_\xi$  & $\P^N_\eta$ & ${\bf P}^P_\omega$ \\
\end{tabular}
\end{center}
Furthermore, to ease reading, whenever a new notation that is relevant throughout is introduced inside of a paragraph, it will be coloured \noteblue{in blue}. Other notation conventions will be used throughout;

\begin{itemize}
    \item We use $"\llbracket"$ to delimit sets of \emph{integers}, meaning for example that  $\llbracket a, b\rrbracket=\{a,a+1,\dots,b-1,b\}$, whereas   $\llbracket a, b\llbracket=\{a,a+1,\dots,b-1\}$.
\item We denote by $\N:=\{0,1,\dots\} $ the set of non-negative integers.
\item We will consider various time-dependent processes throughout this article. As a general rule, for $\{\gamma(t),\; t\geq 0\}$, we will simply denote by $\gamma:=\gamma(0)$ its \emph{initial state}, and by $\gamma(\cdot)$ the whole process. For example, ${\bf P}^K_\xi(\xi(t)=\xi')$ is the probability, starting from $\xi$, to be in a  state $\xi'$ at time $t$. 
\end{itemize}

\section{The SSEP with traps: model and results}
\label{sec:swt}
\subsection{Definition}
\label{sec:defxi}
We introduce our model, that we call \emph{SSEP with traps}, and for which we use the abbreviation \emph{SWT}. We fix a positive integer $K\geq 1$, and define the SWT on the \emph{periodic} ring $\noteblue{\T_K:=\{1,\dots K\}}$, and define its statespace as
\eq{eq:defGammaK}{\Gamma_K:=\{n\in \Z,\; n\leq 1\}^{\T_K}.} 
As represented in Figure \ref{fig:SWTconfig}, a configuration $\noteblue{\xi=(\xi_k)_{k\in \T_K}}\in \Gamma_K$ is interpreted as follows:
\begin{itemize}
\item For $k\in \T_K$, $\xi_k=1$ indicates that site $k$ is occupied by a particle.
\item Similarly, $\xi_k=0$ indicates that site $k$ is empty.  An empty site can alternatively be interpreted as a trap of depth zero. 
\item Finally, $\xi_k<0$ indicates that at site $k$, there is a trap of depth $|\xi_k|$.
\end{itemize}

The SWT $\xi(\cdot):=\{\xi(t),\; t\geq 0\}$ is a continuous time Markov process on $\Gamma_K$, whose dynamics is analogous to that of the classical SSEP. Particles jump symmetrically at rate 1 to a neighbouring site with no particle (exclusion rule), with the following exception; as represented in Figure \ref{fig:SWTconfig}, when a particle jumps towards a trap of positive depth, it falls into it, becomes inactive forever, and the trap's depth reduces by one. Since they play no further roles in the dynamics, once the trap's depth's decrease has been recorded, inactive particles are considered \emph{dead}, so that when referring to a \emph{particle} in the SWT, unless otherwise specified we will always be talking about a \emph{live} particle, meaning the single live particle present at a site $k$ where $\xi_k=1$. According to this dynamics, traps get progressively filled up by successive particles falling into them, until they are full, at which point they start behaving as empty sites. Since dead particles are inactive, trap depth can only decrease, in particular the SWT is \emph{non-reversible}. The SWT is therefore driven by the generator 
\eq{eq:genexi}
{
{\mathscr{L}}^{\SWT}_K f (\xi)=\sum_{k\in \T_K}\sum_{z=\pm 1}{\bf 1}_{\{\xi_k=1, \xi_{k+z}\leq 0\}}\{f(\xi^{k,k+z})-f(\xi)\},
}
where 
\eqs{
\xi^{k,k+z}_{k'}=\begin{cases}0 & \mbox{ if }k'={k}\\
\xi_{k+z}+1 & \mbox{ if }k'=k+z\\
\xi_{k'}& \mbox{ if }k'\neq k,k+z.
\end{cases}
}

Given a configuration $\xi\in \Gamma_K$, we define its \emph{total trap depth}, (resp. its \emph{number of particles}), as the quantity \eq{eq:defTTD}{|\xi^-|:=-\sum_{k\in\T_K}\min(\xi_k, 0) \qquad \Big(\mbox{resp.} \qquad  |\xi^+|:=\sum_{k\in\T_K}\un_{\{\xi_k=1\}}\Big).}
We also define its \emph{number of excess particles} $\noteblue{S(\xi)=\sum_{k\in \T_K}\xi_k=|\xi^+|-|\xi^-|}\in \Z$.
Note that the number of excess particles $S(\xi)$ is conserved throughout the dynamics, because the process loses a particle iff it loses one trap depth. It can also be negative if the configuration has more total trap depth than particles.

\begin{figure}
\centering
\includegraphics[width=10cm]{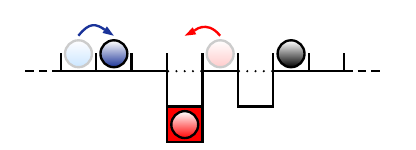}
\caption{A local configuration of the SWT, with two jumps represented: the jump of the blue particle to an empty neighbour, and the jump of the red particle to a neighbouring trap, killing it and reducing the trap's depth by $1$.}
\label{fig:SWTconfig}
\end{figure}

\medskip

The SWT  can be seen as a generalisation of a model previously introduced by Funaki in \cite{Funaki99}, in which sites can either be occupied by water ($\xi_k=1$) or ice ($\xi_k=-\ell$), and water is destroyed by ice but melts it after $\ell$ visits by water.  That process, whose macroscopic limit derived in \cite{Funaki99} is a free boundary problem,  is actually a SSEP with traps, but started from a distribution where only occupied sites and traps of depth exactly $\ell$ are allowed.

\medskip 

For our purpose, we introduce the SWT as a symmetric process on the 1-dimensional ring $\T_K$. However, this model  readily extends to more general setups, namely to different lattices, and to general local jump rates (e.g. asymmetric or weakly asymmetric).

\subsection{Transient, frozen and ergodic states for the SWT}
\label{subsec:tran_fro_erg_swt}
Because of the presence of traps, depending on the sign of its number of excess particles $S(\xi(0))$,  the SWT will eventually either freeze, because all of its particles are trapped, or become a SSEP, because all its traps have been filled up.

\medskip

We call \emph{critical} (resp. \emph{subcritical}, \emph{supercritical}) configuration a configuration $\xi$ with $S(\xi)=0$ (resp. $S(\xi)\leq 0$, $S(\xi)\geq 0$). Note that we identify \emph{critical} configurations as both \emph{subcritical} and \emph{supercritical}. According to the previous remark, we distinguish two categories of \emph{subcritical configurations};
\begin{itemize}
\item A configuration with no particles is frozen, and we denote by 
\eq{eq:FK}{\mathcal{F}^{\SWT}_K:=\{\xi\in\Gamma_K \mbox{ such that }  \; \xi_k \leq 0, \;\forall k\in \T_K \}}
the set of frozen configurations.
\item Any  subcritical configuration that is not frozen will be called \emph{subcritical transient}, and we denote by 
\eqs{\mathcal{T}_{-,K}^{\SWT}:=\big\{\xi\in \Gamma_K\setminus\mathcal{F}^{\SWT}_K\mbox{ such that } S(\xi)\leq 0\big\}}
the set of subcritical transient configurations. 
\end{itemize}

\bigskip

We have an analogous classification for \emph{supercritical configurations}.
\begin{itemize}
\item Once all traps have been filled up, the process becomes a classical SSEP, and in particular becomes ergodic and reversible. We therefore define the \emph{ergodic component} for the process, which is the set 
\eq{eq:EK}{\mathcal{E}_K^{\SWT}:=\{\xi\in\Gamma_K \mbox{ such that }  \; \xi_k \geq 0,\;\forall k\in \T_K \}=\{0,1\}^{\T_K}}
of classical SSEP configurations.
\item Finally, any supercritical configuration that is not ergodic will be called \emph{supercritical transient}, and we denote by 
\eq{eq:DefTplusK}{\mathcal{T}_{+,K}^{\SWT}:=\big\{\xi\in \Gamma_K\setminus\mathcal{E}_K^{\SWT}  \mbox{ such that }  \; S(\xi)\geq  0\big\}}
the set of supercritical transient configurations.

\end{itemize} 
Note that the only configuration that is both \emph{ergodic} and \emph{frozen} is the empty configuration $\xi\equiv 0$. We call \emph{transient state} any configuration that is neither ergodic nor frozen, and denote by 
\eqs{\mathcal{T}_K^{\SWT}=\{\xi\in \Gamma_K \mbox{ such that }  \; |\xi^-| > 0,\; |\xi^+| > 0\} } 
the set of transient configurations.

\subsection{Transience and mixing time for the SWT}

Given an initial configuration $\xi\in \Gamma_K$, we denote by \noteblue{${\mathbf P }_\xi^K$} the distribution of the SSEP with traps $\xi(\cdot)$ on $\T_K$, driven by the generator $\mathscr{L}_K^{\SWT}$ defined in \eqref{eq:genexi} and started from the initial configuration $\xi$. 

\subsubsection{Transience time}

We introduce the SWT's transience probability at time $t$, starting from $\xi$,
\eq{eq:pxit}{p_\xi(t):={\bf P}^K_\xi\big(\xi(t)\in \mathcal{T}_K^{\SWT}\big).}
Note that since it is encoded implicitly in the initial configuration $\xi$, we do not make the scaling parameter $K$ appear in $p_\xi(t)$.
We now define the maximal transience  probability, and the $\varepsilon$-transience time, 
\eq{eq:pkSWT}{p_{K}^{\SWT}(t):=\sup \Big\{p_\xi(t): \xi \in \Gamma_K\Big\}\qquad \mbox{ and }\qquad \theta_{K}^{\SWT}(\varepsilon):=\inf \Big\{t\geq 0: p_{K}^{\SWT}(t)\leq \varepsilon\Big\}.}
It is not hard to check that $p_{K}^{\SWT}=[0,+\infty)\to[0,1)$ is differentiable and strictly decreasing, so that $\theta^{\SWT}_{K}$ is its inverse function. However, we rather define $\theta_K^{\SWT}(\varepsilon)$ as in \eqref{eq:pkSWT}, since it is the standard presentation for cutoff phenomena and allows us to avoid proving the last statement.
Our first result is an estimate of the transience probability for the SWT, uniformly in $K$.

\begin{theorem} 
\label{thm:transiencetimeSSEP}
We have 
\eq{eq:transiencetimeSSEP}{\lim\limits_{t\to+\infty}\sup_{K\ge 0}p_{K}^{\SWT}(tK^2 \log K)=0.}
\end{theorem}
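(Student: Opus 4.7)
The plan is to establish the uniform bound $\sup_{K}\,p_K^{\SWT}(tK^2\log K)\to 0$ as $t\to+\infty$ in two steps: an attractiveness-based reduction to critical initial configurations, followed by a coupling of the critical SWT with a non-periodic SSEP with two empty absorbing reservoirs.

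For the first step, I would exploit the attractiveness of the SWT (easily verified from \eqref{eq:genexi} through the basic Poissonian coupling) to reduce to the critical case $S(\xi)=0$. Given any $\xi\in\Gamma_K$, I build a critical reference $\xi^\star$ with $S(\xi^\star)=0$: in the supercritical case I remove $S(\xi)$ live particles to get $\xi^\star\leq \xi$; in the subcritical case I raise enough trap coordinates (for instance by converting traps of sufficient depth into live particles) to get $\xi^\star\geq \xi$. A key observation is that the only non-transient critical configuration is the all-zero configuration $0\in \Gamma_K$, which is both frozen and ergodic. Hence, under the attractive coupling, $\xi^\star(t)\equiv 0$ forces $\xi(t)$ to have only non-negative (resp.\ non-positive) coordinates, so that $\xi(t)\in\mathcal{E}_K^{\SWT}$ (resp.\ $\xi(t)\in\mathcal{F}_K^{\SWT}$), and in either case $\xi(t)$ is non-transient. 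It therefore suffices to show that, starting from any critical $\xi$, the SWT hits $0$ by time $tK^2\log K$ with probability tending to $1$ as $t\to+\infty$, uniformly in $K$ and $\xi$.

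For the second step, given a critical $\xi$, the trap sites partition $\T_K$ into arcs of lengths $L_1,\dots,L_m\leq K$. Inside each arc, particles evolve as an SSEP, and any particle jumping out of an arc falls into the adjacent trap and dies, decreasing that trap's depth by one. Up until the first trap is entirely exhausted, I couple the dynamics in each arc with an independent non-periodic SSEP on a segment of the same length, with two empty absorbing reservoirs. When a trap is exhausted it becomes a regular empty site and two neighbouring arcs merge; I iterate the coupling with the coarser partition. Since each merging event decreases the number of arcs by one, the procedure terminates in at most $K$ steps, precisely when $\xi(t)\equiv 0$. I then invoke the classical estimate that, on a segment of length $L\leq K$ with two empty reservoirs, the SSEP is absorbed by time $O(L^2\log L)$ with high probability, either via the $O(K^{-2})$ spectral gap of the Dirichlet SSEP combined with a contractive $L^2$ bound, or more elementarily via a labelled-particle argument and a union bound on single-particle exit times of $\llbracket 0,L\rrbracket$. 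Summing the contributions of the at most $K$ successive arc-merging phases yields the required $O(K^2\log K)$ total bound, with failure probability $o(1)$ as $t\to+\infty$, uniformly in $K$ and $\xi$.

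The main technical obstacle is implementing the iterative coupling across successive arc-merging events: the coupling must simultaneously track labelled particles and the evolving arc structure, and the accumulated failure probability from the $O(K)$ successive union bounds must remain small (requiring exponential-in-$t$ tails for the single-arc absorption time, which the SSEP with reservoirs does provide). The labelled SWT announced for Section \ref{sec:proof_tran_swt} seems tailored to this bookkeeping. A secondary subtlety is that transience is not itself a coordinate-wise monotone event in $\xi$, so the attractive coupling of the first step is used only through the hitting of the specific fixed point $0$ by the critical reference configuration, and not directly through monotonicity of the transience event.
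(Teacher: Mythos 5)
Your first step (the attractiveness reduction to critical configurations) is correct and mirrors Lemma \ref{lem:t_tran_K}, but your second step has a genuine gap: the arc-merging bookkeeping does not produce the claimed $O(K^2\log K)$ bound. Each phase (between two successive trap exhaustions) can only be bounded, via your coupling with SSEPs with empty absorbing reservoirs, by an absorption time of order $L^2\log L$ with $L$ up to $K$, while the number of phases can itself be of order $K$ (a critical configuration may contain order $K$ traps of depth one). Summing the per-phase bounds therefore yields $O(K^3\log K)$, a factor $K$ too large, and you give no mechanism (telescoping, shrinking arc lengths, decaying particle numbers) that would make the phase durations sum to $O(K^2\log K)$. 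Nothing in the phase structure forces later phases to be shorter: merged arcs are longer, and restarting the reservoir coupling after each merge resets the absorption estimate from scratch. To get the right total time one has to track each particle globally across the whole evolution --- each particle only ever needs to reach a trap once --- and that is exactly the information your phase decomposition discards.

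This is how the paper actually proceeds, and the route avoids phases altogether: using the labelled SWT of Section \ref{subsub:echange}, one couples $\hat\xi(\cdot)$ with a labelled SSEP (interchange process) obtained by deleting all traps but keeping the same Poisson clocks and labels, so that $\Xi_j(t)=Y_j(t\wedge\tau_j)$. Transience at time $t$ then implies that some label $j$ has a trajectory $\{Y_j(s):\, s\le t\}$ which has not covered all of $\T_K$ (a still-live particle cannot have visited the site of a still-positive trap), and a union bound over the at most $K$ particles combined with the single random walk cover-time estimate \eqref{eq:explorationtime} gives $p_K^{\SWT}(tK^2\log K)\le c\,K^{1-t}$, which vanishes uniformly in $K$ as $t\to\infty$. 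If you want to keep your arc picture you would essentially have to re-derive this particle-wise argument inside it; as written, the summation over merging events is the step that fails.
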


 We now state our second main result, namely that the transience time for the SWT  undergoes a cutoff around time 
\eq{eq:tNstar}{t_K^\star:=\frac{K^2\log K}{\pi^2}.}
\begin{theorem}[Cutoff for the transience time of the SSEP with traps] 
\label{thm:cutoff_SWT}
There exists $C>0$ such that for all $K \in \N^*$ and $0<\varepsilon<1$,
\begin{equation}
\label{eq:encadrement_t_freeze}
\left|\theta_{K}^{\SWT}(\varepsilon) - t_K^\star \right| \le C K^2 \left( 1+ \log \frac{\log K}{\varepsilon \wedge (1-\varepsilon)}\right).
\end{equation} 
In particular,  the transience time exhibits cutoff, meaning that for all $0<\varepsilon < 1$,
\begin{equation}\label{eq:cutoff}
\lim\limits_{K\to \infty} \frac{\theta_K^{\SWT}(\varepsilon)}{\theta_K^{\SWT}(\frac14)} = 1.
\end{equation}
\end{theorem}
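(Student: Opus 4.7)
The plan is to bracket $\theta_K^{\SWT}(\varepsilon)$ between two bounds of the form $t_K^\star \pm CK^2(1 + \log(\log K/(\varepsilon \wedge (1-\varepsilon))))$. Theorem \ref{thm:transiencetimeSSEP} already supplies the correct order of magnitude $K^2\log K$; what remains is to identify the exact constant $1/\pi^2$ appearing in $t_K^\star$ and to control the transition window on the finer scale $K^2$. The origin of the constant is suggestive: $\pi^2/K^2$ is, up to lower order, the smallest eigenvalue of the discrete Dirichlet Laplacian on a segment of length $K$, which is also the exponential decay rate of the survival probability for a SSEP on $\llbracket 1,K\rrbracket$ whose two endpoints are in contact with empty (absorbing) reservoirs. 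Following the introduction, the central ingredient is therefore a coupling of the SWT with such an open-boundary SSEP.

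For the \emph{upper bound} I would first use attractiveness to reduce the worst-case transience probability to configurations with a single very deep trap facing the $|\xi^+|\le K$ live particles. Cutting the ring $\T_K$ at the trap yields a segment of length $K-1$ on which the live particles are stochastically dominated by a SSEP with empty absorbing reservoirs at both ends. A spectral decomposition then gives that the probability that at least one particle survives up to time $t$ decays like $K\exp(-\pi^2 t/K^2)$, the linear prefactor accounting for the initial number of particles which must all be absorbed. Inverting this inequality at the level $\varepsilon$ produces the desired bound on $\theta_K^{\SWT}(\varepsilon)$, with the $\log\log K$ term entering through the $\log K$ prefactor.

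For the \emph{lower bound} I would exhibit a single near-critical starting configuration---for instance one with $K-1$ live particles and a single trap of depth $K-1$, so that $S(\xi)=0$---and show that at time $t_K^\star - CK^2(1+\log(\log K/(1-\varepsilon)))$ at least one live particle and at least one un-filled trap still coexist, hence $\xi(t)\in\mathcal{T}_K^{\SWT}$, with probability at least $\varepsilon$. Again, comparison with the open-boundary SSEP supplies the needed survival estimate, now in the opposite direction, via projection onto the first nonzero Dirichlet eigenfunction of the discrete Laplacian together with a second-moment estimate of Paley--Zygmund type.

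I expect the main obstacle to lie in the lower bound. The paper stresses that the SWT does \emph{not} preserve negative dependence, so the Salez-style mixing arguments used in comparable settings are unavailable; the entire argument must instead be driven by attractiveness and by careful monotone couplings with the open-boundary SSEP. Moreover, that coupling is only faithful while the trap has not yet been filled up, so the near-critical initial configuration must be chosen with enough total trap depth that the comparison remains valid throughout the window $[0,t_K^\star]$. It is precisely through the Dirichlet survival tail on the segment, together with this constraint, that the additive error $CK^2(1+\log(\log K/(\varepsilon\wedge(1-\varepsilon))))$ appears in \eqref{eq:encadrement_t_freeze}; the deduction of the cutoff statement \eqref{eq:cutoff} is then immediate since this additive error is $o(t_K^\star)$.
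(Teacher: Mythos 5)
Your single-trap analysis and your lower bound are essentially the paper's: cutting the ring at the unique deep trap of a critical configuration turns the live particles into a SSEP on $\llbracket 1,K-1\rrbracket$ with empty absorbing reservoirs, and the sharp two-sided estimate (Markov inequality for the upper tail, a second-moment/Paley--Zygmund argument using the negative dependence of the \emph{open-boundary} SSEP for the lower tail) is exactly Lemma \ref{lem:unique_well}, adapted from \cite{tran_cutoff_2022}. The genuine gap is in your upper bound for general configurations: you claim that attractiveness reduces the worst case to ``a single very deep trap''. Attractiveness only reduces to \emph{critical} configurations (Lemma \ref{lem:t_tran_K}); it cannot turn a critical configuration with several traps into a single-trap critical one, because in the partial order on $\Gamma_K$ removing the other traps raises the configuration while deepening the remaining trap lowers it, so no monotone comparison exists in either direction. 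The paper explicitly flags ``the single-trap configuration is the worst'' as an open conjecture (Remark \ref{rem:worstcutoff}), and notes that proving it would \emph{improve} the window to $\mathcal{O}(K^2)$ --- which also shows your accounting of the error term is off: inverting $K e^{-\pi^2 t/K^2}\le\varepsilon$ gives a window $\mathcal{O}(K^2\log(1/\varepsilon))$ with no $\log\log K$ at all.

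What the paper actually does for the upper bound, and what your sketch is missing, is a treatment of critical configurations with traps at unknown, random surviving positions: $\T_K$ is split into $Q+1\approx\log K$ boxes $A_i$ of size $\approx K/\log K$, one union-bounds over which box still contains a trap at time $t$ (this union bound, applied at level $\varepsilon/(Q+1)$, is the true source of the $\log\log K$ in \eqref{eq:encadrement_t_freeze}), and on the event $E_{A_i}(t)$ one builds three ``unrolled'' couplings ($\Upsilon_{A_i}$, $\Upsilon_{A_i}^{\textsc{r}}$, $\Upsilon_{A_i}^{\textsc{l}}$, Section \ref{subsec:unroll}) of the labelled SWT with SSEPs on the non-periodic segment $\llbracket 1,K+|A_i|\rrbracket$ with empty reservoirs. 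The point of conditioning on $E_{A_i}(t)$ is precisely that the surviving trap blocks full crossings of $A_i$, so the periodic trajectories can be dominated by the open-boundary process; without this event (or in your sketch, without knowing which configuration is worst), the comparison with a segment is not valid, since a trap may fill up mid-way and restore periodicity. So the scheme you propose proves the theorem only for single-trap initial data; to close the argument you would need either the (conjectural) extremality of the single-trap configuration or a multi-box decomposition and coupling of the type the paper constructs.
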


Theorem \ref{thm:transiencetimeSSEP} will be proved later in Section \ref{sec:proof_th_3}. The proof of Theorem \ref{thm:cutoff_SWT} is the purpose of Section \ref{sec:cutoffSWT}. 

\medskip

Note that Theorem \ref{thm:transiencetimeSSEP} could be obtained straightforwardly using Theorem \ref{thm:cutoff_SWT}. However, we choose this presentation of both results because the proof of Theorem \ref{thm:transiencetimeSSEP} will lay the foundation for the (harder) proof of Theorem \ref{thm:cutoff_SWT}. Roughly speaking, the first theorem states that on time scales $tK^2\log K$, the transience probability goes from $1$ to $0$ over long times. Theorem \ref{thm:cutoff_SWT} states that this evolution actually occurs brutally, and at an explicit time $t:=1/\pi^2.$

\medskip

\begin{remark}[Worst configurations and optimal cutoff window]
\label{rem:worstcutoff}
We point out below in Remark \ref{rem:attract} that the maximal transience time for the SWT is actually achieved for critical configurations, meaning for configurations with as many particles as total trap depth. Among critical configuration however, it is not clear which one is the worst case scenario. 

\medskip

Simulations suggest that the configuration with a single deep trap is the worst one, and proving this claim would allow us to improve the upper bound and obtain a cutoff window of $\mathcal{O}(K^2)$ instead of $\mathcal{O}(K^2 \log \log K)$. However, no straightforward argument so far proves that the unique case trap is the worst one, so this is all left as a conjecture at this point.
\end{remark}

\begin{remark}[Transience time for independent random walks] \label{rem:IRW1}
If the particles' trajectories until they are trapped are independent random walks, it is possible to show, following the same steps, that Theorem \ref{thm:cutoff_SWT} is also true: the transience time exhibits cutoff at the same time $t_K^\star$. This comes from the fact that the time for a SSEP with reservoirs with $K-1$ particles to lose $s$ particles is of the same order as the time for $s$ independent random walks out of $K-1$ to reach empty reservoirs, so \eqref{eq:encadrement_theta} is also true for this process. 
\end{remark}

\subsubsection{Mixing time}

Our third main result concerns the mixing time of the SWT. We denote by $\noteblue{d_{\textsc{tv}}}$ the total variation distance between two probability measures. Given two integers $0\leq s\leq K$, denote by $\noteblue{\Gamma_{K,s}:=\{\xi\in \Gamma_K, \;S(\xi)=s\big\}}$ (resp. by $\noteblue{\Sigma_{K,s}:=\big\{\xi\in \{0,1\}^{\T_K}, \;|\xi|=s\big\}}$)  the set of SWT configurations (resp SSEP configurations) with $s$ excess particles. We then define 
\eqs{\pi_{K,s}(\xi)=\binom{K}{s}^{-1}{\bf 1}_{\{\xi\in\Sigma_{K,s}\}}}
the uniform distribution on $\Sigma_{K,s}$, which is invariant law of the SWT on $\T_K$ with $s$ excess particles.
For $\varepsilon > 0$, the $\varepsilon$-mixing time of the SWT is defined as 
\eq{eq:def_tmix}{\tau_{K}^{\SWT}(\varepsilon) = \inf \left\{t \ge 0 : \sup_{\substack{s\in \llbracket0,K\rrbracket\\\xi\in \Gamma_{K,s}}} d_{\textsc{tv}}\left(\bP_{\xi}^K\left(\xi(t) \in \cdot\right),\pi_{K, s}\right) \le \varepsilon \right\}.}

Then we have the following result:
\begin{theorem}[Cutoff for the mixing time of the SWT]
\label{thm:cutoff_mix}
There exists $C>0$ such that for all $K \in \N^*$ and $0<\varepsilon<1$,
\eq{eq:tmix_fenetre}{|\tau_K^{\SWT}(\varepsilon) - t_K^{\star}| \le CK^2\left(1 + \log \frac{\log K}{\varepsilon \wedge (1-\varepsilon)}\right). }
In particular, the mixing time for the SWT also exhibits cutoff at time $t_K^{\star}$, meaning that for all $0<\varepsilon < 1$,
\begin{equation}\label{eq:cutoff2}
\lim\limits_{K\to \infty} \frac{\tau_K^{\SWT}(\varepsilon)}{t_K^{\star}} = 1.
\end{equation}
\end{theorem}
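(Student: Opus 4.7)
The approach combines our transience time cutoff (Theorem \ref{thm:cutoff_SWT}) with the cutoff for the SSEP mixing time on the ring $\T_K$ from \cite{lacoin_simple_2017}, via the observation that, once in the ergodic component $\mathcal{E}_K^{\SWT}$, a SWT with $s$ excess particles is exactly a classical SSEP with $s$ particles, whose invariant law is $\pi_{K,s}$.

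\textbf{Upper bound.} For $s=0$, $\pi_{K,0}$ is the Dirac at $\xi\equiv 0$; since a critical trajectory stays in $\Gamma_{K,0}$, which meets $\mathcal{F}_K^{\SWT}\cup\mathcal{E}_K^{\SWT}$ only at the empty configuration, one has the identity $d_{\textsc{tv}}(\bP_\xi^K(\xi(t)\in\cdot), \pi_{K,0}) = p_\xi(t)$, and the bound \eqref{eq:tmix_fenetre} follows from Theorem \ref{thm:cutoff_SWT}. For $s\geq 1$, fix two time budgets $T_1, T_2$ and apply the strong Markov property at $T_1$ and the triangle inequality to obtain
\[
d_{\textsc{tv}}\bigl(\bP_\xi^K(\xi(T_1+T_2)\in\cdot), \pi_{K,s}\bigr) \leq \bP_\xi^K\bigl(\xi(T_1)\in \mathcal{T}_K^{\SWT}\bigr) + \sup_{\eta \in \mathcal{E}_K^{\SWT}\cap \Sigma_{K,s}} d_{\textsc{tv}}\bigl(\bP_\eta^K(\xi(T_2)\in\cdot), \pi_{K,s}\bigr).
\]
The first term is an $s$-refined transience probability, to be controlled via Lemma \ref{lem:t_tran_critique}; the second is the SSEP mixing time on $\T_K$ with $s$ particles, at cutoff scale $\simeq \frac{K^2}{2\pi^2}\log(s\wedge(K-s))$ by Lacoin. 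As highlighted in Remark \ref{rem:mixingtransience}, these two contributions are dual: the farther $s$ sits from $0$, the faster the exit from $\mathcal{T}_K^{\SWT}$ but the longer the SSEP takes to equilibrate. Choosing $T_1, T_2$ so that each summand is $\leq \varepsilon/2$, one then gets $T_1+T_2 \leq t_K^\star + O(K^2(1+\log(\log K/\varepsilon)))$ uniformly in $s$, which is precisely the content of Proposition \ref{prop:t_mix_swt_all_s}.

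\textbf{Lower bound.} Restricting the supremum in the definition of $\tau_K^{\SWT}(\varepsilon)$ to critical starting configurations $\xi \in \Gamma_{K,0}$, one has $d_{\textsc{tv}}(\bP_\xi^K(\xi(t)\in\cdot), \pi_{K,0}) = p_\xi(t)$ as above, so that by Remark \ref{rem:worstcutoff} (which ensures that critical configurations attain the maximal transience probability),
\[
\sup_{\xi\in\Gamma_{K,0}} d_{\textsc{tv}}(\bP_\xi^K(\xi(t)\in\cdot),\pi_{K,0}) = p_K^{\SWT}(t).
\]
This yields $\tau_K^{\SWT}(\varepsilon) \geq \theta_K^{\SWT}(\varepsilon)$, and the lower bound in \eqref{eq:tmix_fenetre} is then given by Theorem \ref{thm:cutoff_SWT}.

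\textbf{Main obstacle.} The technical heart is the $s$-refined transience estimate invoked in the upper bound: one needs to show that the worst transience time over $\xi\in\Gamma_{K,s}$ is bounded by $t_K^\star - \frac{K^2}{2\pi^2}\log(s\wedge(K-s)) + O(K^2(1+\log(\log K/\varepsilon)))$, so that the sum with the Lacoin cutoff balances exactly to $t_K^\star$. A blind application of the uniform estimate from Theorem \ref{thm:cutoff_SWT} would instead produce a total of order $\frac{3 K^2}{2\pi^2}\log K$, losing the announced constant $1/\pi^2$. Establishing the refinement will presumably require adapting the coupling arguments with the non-periodic SSEP with empty reservoirs from Section \ref{sec:cutoffSWT}, tracking quantitatively how $s$ excess particles shorten the effective trap-extinction time.
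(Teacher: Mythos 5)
Your proposal is correct and follows essentially the same route as the paper: decompose the mixing time via the Markov property into an $s$-refined transience time plus the post-ergodic SSEP mixing time, lower-bound the mixing time by the critical ($s=0$) transience time, and balance $\frac{K^2}{\pi^2}\log\frac{K}{s}$ against $\frac{K^2}{2\pi^2}\log s$. The refinement you flag as the main obstacle is precisely Lemma \ref{lem:t_tran_critique} (which you already invoke, so no new coupling work is needed here), and note that the $\frac{K^2}{2\pi^2}\log\frac{s}{\varepsilon}$ bound with explicit $\varepsilon$-dependence used in the balance is the paper's own Appendix \ref{app:tmix} estimate \eqref{eq:ssep_pas_sharp} rather than Lacoin's sharp $\frac{1}{8\pi^2}$ cutoff.
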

\begin{remark}
\label{rem:mixingtransience}
At a first glance, it might appear surprising that the cutoff for the SWT's mixing time occurs at the same time $t_K^\star$ as the one for its transience time, even though to enforce mixing, one must surely first exceed the transience time. Actually, there is no contradiction here, because the upper bound $t_K^\star$ for the transience time is achieved (cf. Remark \ref{rem:worstcutoff}) for critical or close-to-critical configurations for which $S(\xi)=\mathcal{O}(1)$. For those, once the ergodic component is reached, mixing requires a time $\mathcal{O}(K^2)$, and therefore has no influence on the SWT's overall mixing time, whose dominant contribution is simply the transience time of order $\mathcal{O}(K^2\log K)$.

Conversely, SSEP configurations with  mixing time of order $\mathcal{O}(K^2\log K)$ need to have $\mathcal{O}(K)$ particles, and must therefore stem from a SWT configuration with $S(\xi)=\mathcal{O}(K)$ excess particles. But according to Lemma \ref{lem:t_tran_critique} below, any such SWT configuration reaches the ergodic component in a diffusive time of order $\mathcal{O}(K^2)$. In this case, the mixing time \emph{starting from the ergodic component} is the dominant contribution to the SWT's overall mixing time. Detailed estimates on the behaviour according to $S(\xi)$ are given in Proposition \ref{prop:t_mix_swt_all_s}.
\end{remark}

\begin{remark}[Mixing time for independent random walks]
As we did for the transience time, we compare the mixing time of the SWT to the mixing time of the process where particles perform independent random walks until they get trapped. We can show, see Appendix \ref{app:tmix}, that the mixing time of $s$ independent random walks when there are no traps is less than $\frac{K^2}{2 \pi^2} \log s$, so in particular the mixing time for independent random walks with traps is less than $t_K^{\star}$. Since the mixing time of this process is lower bounded by its transience time $t_K^{\star}$ (see Remark \ref{rem:IRW1}), it is equal to $t_K^{\star}$, the mixing time of the SSEP with traps. This result is quite interesting, since for the regular SSEP, \cite{oliveira_mixing_2013, hermon_exclusion_2020} have explored the link between the mixing time of the SSEP and of independent random walks on general graphs, and it is conjectured by Oliveira in \cite{oliveira_mixing_2013} that they should be of the same order. In our modified version of the SSEP, this appears to be true.
\end{remark}

\begin{remark}[No negative dependence for the SWT]
As the  SSEP and the SSEP in contact with reservoirs both  preserve negative dependence \cite{salez_universality_2022, tran_cutoff_2022}, and since traps behave in some ways as empty reservoirs, one would expect that the SWT does preserve negative dependence. Surprisingly, this is not the case, as detailed in Appendix \ref{subsec:noND}, and prevents us from exploiting the tools used in \cite{salez_universality_2022} in the context of the SSEP in contact with reservoirs. 

\end{remark}

Theorem \ref{thm:cutoff_mix} is proved in Section \ref{sec:proof_cutoff_mix}. Its proof is actually easier than estimating the transience time, since we just combine estimates on the transience time of the SWT and on the mixing time of the SSEP.

\subsection{Attractiveness and monotonicity of the transience probability}
\label{subsec:att_swt}

The SWT is attractive, meaning that given two SWT configurations $\xi\leq \xi'$ there exists a coupling ${\bf P}_{\xi, \xi'}^K$  between two SWT $\xi(\cdot)$, $\xi'(\cdot)$ respectively started from $\xi,$ $\xi'$ such that for any $t>0$,
\eq{eq:attractiveness}{{\bf P}_{\xi, \xi'}\big(\xi(t)\leq \xi'(t)\big)=1.}

\medskip
This coupling is easily defined, and is very similar to the classical \emph{basic coupling} for zero-range processes \cite[Chapter 2, Section 5]{KL99}. It is enough to equip both processes with the same Poisson clocks associated with the system's edges, and, whenever a clock rings on an edge $e\in \T_K$, make any possible particle jump over $e$ in both $\xi$ and $\xi'$. More precisely, shorten
\eqs{\delta_{k,z}={\bf 1}_{\{\xi_k=1, \xi_{k+z}\leq 0\}}\qquad \delta'_{k,z}={\bf 1}_{\{\xi'_k=1, \xi'_{k+z}\leq 0\}},}
which is either $0$ or $1$ depending on whether a particle can jump from $k$ to $k+z$ in $\xi$, $\xi'$. The basic coupling ${\bf P}_{\xi, \xi'}^K$ is then defined as the distribution of a Markov process on $\Gamma_K^2$, started from $(\xi, \xi')$ and  driven by the generator
\begin{multline*}
{\mathscr{L}}^{\SWT}_{K,2} f (\xi, \xi')=\sum_{k\in \T_K}\sum_{z=\pm 1}\bigg(\delta_{k,z}\delta'_{k,z}\Big\{f(\xi^{k,k+z},(\xi')^{k,k+z})-f(\xi, \xi')\Big\}\\
+\delta_{k,z}(1-\delta'_{k,z})\Big\{f(\xi^{k,k+z},\xi')-f(\xi, \xi')\Big\}+(1-\delta_{k,z})\delta'_{k,z}\Big\{f(\xi,(\xi')^{k,k+z})-f(\xi, \xi')\Big\}\bigg),
\end{multline*}
To see that order between configurations is preserved under this coupling, fix $k$, $k+z$, and assume that locally $\xi_k\leq \xi'_k$, $\xi_{k+z}\leq \xi'_{k+z}$ at time $t^-$, and that at time $t$ a clock triggers a jump from $k$ to $k+z$. Because the two configurations are locally ordered, only three cases are possible
\begin{itemize}
\item If $\xi'_k(t^-)=\xi'_{k+z}(t^-)=1$, nothing will happen at time $t$ in $\xi'$, so that $\xi'_k(t)=\xi'_{k+z}(t)=1$ and order is clearly preserved since for any $k'$, $\xi_{k'}\leq 1$ by definition.
\item If  $\xi'_k(t^-)=1$ and $\xi'_{k+z}(t^-)\leq 0$, then we must have  $\xi_{k+z}(t^-)\leq \xi'_{k+z}(t^-)\leq  0$ as well. Then, either  $\xi_k(t^-)=1$ and then a particle jumps in both processes at time $t$ towards site $k+z$, preserving the order, or $\xi_k(t^-)\leq 0$, and at time $t$ a particle only jumps in $\xi'$. If the latter occurs, after the jump, we have $\xi'_k(t)=0\geq \xi_k(t^-)=\xi_k(t)$, and $\xi'_{k+z}(t)=\xi'_{k+z}(t^-)+1 > \xi_k(t^-)=\xi_k(t)$, so that order is preserved as well.
\item If $\xi'_k(t^-)\leq 0$ and $\xi'_{k+z}(t^-)\leq 0$, nothing happens in either configuration, so that order is preserved as well.
\end{itemize}

\begin{remark}[Attractiveness and monotonicity of the transience probability]
\label{rem:attract}
Attractiveness is a very useful property, and particularly so regarding the transience time of the SWT : indeed, attractiveness yields that adding particles or removing traps to a supercritical SWT decreases  a.s. its transience time. Similarly, in a subcritical configuration, removing particles or adding traps decreases a.s. its transience time. In particular, recalling the notations introduced in Section \ref{subsec:tran_fro_erg_swt}, and  defining the subcritical, supercritical and critical transience probabilities, 
\eq{eq:critprobSWT}{p_{\pm,K}^{\SWT}(t):=\sup \Big\{p_\xi(t): \xi \in \mathcal{T}_{\pm,K}^K\Big\} \quad \mbox{and} \quad  p_{\star,K}^{\SWT}(t):=\sup \Big\{p_\xi(t), \xi \in \Gamma_K : S(\xi)= 0\Big\}.}
Then, the SWT's attractiveness yields (See Lemma \ref{lem:t_tran_K} below)  
\eq{eq:probcritSWT}{p_{+,K}^{\SWT}(t)= p_{-,K}^{\SWT}(t)= p_{\star,K}^{\SWT}(t).}
\end{remark}

\section{The facilitated exclusion process: model and results}
\label{sec:fep}

The SWT described above can be mapped into the Facilitated Exclusion Process (FEP) whose macroscopic behaviour was studied in \cite{blondel_hydrodynamic_2020, blondel_stefan_2021}.  
In particular, although Theorems \ref{thm:transiencetimeSSEP} and \ref{thm:cutoff_SWT} are interesting in their own right, they also yield analogous estimates on the (symmetric) FEP, which we now define.

\subsection{The facilitated exclusion process}

We define the FEP on the periodic lattice $\T_N = \{1,\dots,N\}$, it is an exclusion process with  state space is given by $\noteblue{\Sigma_N:=\{0,1\}^{\T_N}}$. We denote by \noteblue{$\eta \in \Sigma_N$} its configurations, i.e.  collections $(\eta_x)_{x\in \T_N}$, where $\eta_x=1$ means that site $x$ is occupied by a particle, and $\eta_x=0$ means that site $x$ is empty.  Dynamically speaking, in the FEP each particle jumps at rate $1$ to a neighbouring site, with two constraints:
\begin{itemize}
\item an \emph{exclusion constraint}, meaning that particles cannot jump on sites that are already occupied,
\item a \emph{kinetic constraint}, meaning that a particle at site $x$ can jump to a neighbouring empty site $x\pm 1$ if and only if its other neighbouring site $x\mp 1$ is occupied. In the rest of the article, particles satisfying the kinetic constraint, i.e. with a neighbouring particle, will be referred to as \emph{active particles}, as opposed to \emph{isolated particles,} which are surrounded by empty sites and for this reason cannot jump. Note that we choose a different terminology (\emph{active/isolated} particles) than for the SWT (\emph{live/dead} particles), because no one-to-one mapping will be available between SWT particles and FEP particles.
\end{itemize}
Given these two constraints, the FEP on $\T_N$ is characterised by its Markov generator $\gene, $ acting on function $f:\Sigma_N\to \R$ as
\eq{eq:gene}{\gene^{\FEP} f (\eta)=\sum_{x\in \T_N}c_{x,x+1}(\eta)\{f(\eta^{x,x+1})-f(\eta)\},}
where the jump rate $c_{x,x+1}$ is given by 
\eqs{
c_{x,x+1}(\eta)=\eta_{x-1}\eta_x(1-\eta_{x+1})+(1-\eta_x)\eta_{x+1}\eta_{x+2},
}
and $\eta^{x,x+1}$ is the configuration where sites $x$ and $x+1$ have been inverted in $\eta$, namely 
\begin{equation} \label{eq:eta_ech}
\eta^{x,x+1}_y=\begin{cases}\eta_x & \mbox{ if }y={x+1}\\
\eta_{x+1} & \mbox{ if }y=x\\
\eta_y& \mbox{ if }y\neq x,x+1.
\end{cases}
\end{equation}

\subsection{Transient, ergodic, frozen configurations}

\begin{figure}
    \centering
    \begin{subfigure}[b]{0.45\textwidth}
        \centering \includegraphics[width=\textwidth]{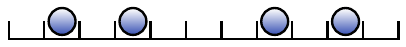}
        \caption{A frozen configuration}\label{fig:fep_gele}
    \end{subfigure}
        \hspace{0.08\textwidth} 
    \begin{subfigure}[b]{0.45\textwidth} 
        \centering \includegraphics[width=\textwidth]{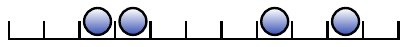}
        \caption{A subcritical transient configuration}\label{fig:tran_bad}
    \end{subfigure}
        		\begin{subfigure}[b]{0.45\textwidth} 
        		\vspace{0.5cm}
        \centering \includegraphics[width=\textwidth]{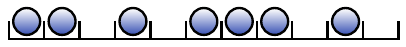}
        \caption{An ergodic configuration}\label{fig:fep_ergo}
    \end{subfigure}
                \hspace{0.08\textwidth} 
		\begin{subfigure}[b]{0.45\textwidth} 
		                        		\vspace{0.5cm}	
        \centering \includegraphics[width=\textwidth]{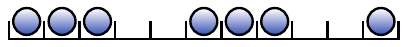}
        \caption{A supercritical transient configuration}\label{fig:tran_good}
    \end{subfigure}		
    \caption{Examples of the different types of configuration of the FEP on $\T_{11}$}\label{fig:ex_fep}
\end{figure}
The FEP is conservative, meaning its number of particles is preserved by the dynamics.  Given a configuration $\eta\in \Sigma_N$, denote by $\noteblue{K:=|\eta|}$ its number of particles and $\noteblue{P:=N-|\eta|}$ its number of empty sites. As for the SWT we distinguish four possible types of configurations, summarised in Figure \ref{fig:ex_fep}, depending on their number of particles. The sets of configurations are named in analogy to Section \ref{subsec:tran_fro_erg_swt}.

\bigskip

First, we distinguish two categories of \emph{subcritical configurations}, i.e. such that $K\leq N/2$.
\begin{itemize}
\item Because of the kinetic constraint, a configuration with no two neighbouring occupied sites is frozen. We denote by 
\eq{eq:FN}{\mathcal{F}^{\FEP}_N:=\{\eta\in\Sigma_N :  \; \eta_x\eta_ {x+1}=0 ,\; \forall x\in \T_N\}}
the set of frozen configurations. The configuration in Figure \ref{fig:fep_gele} is frozen.
\item Any configuration with $K\leq N/2$ particles that is not frozen will be called \emph{subcritical transient}, and we denote by 
\eqs{\TB^{\FEP}:=\Big\{\eta\in \Sigma_N\setminus\mathcal{F}_N^{\FEP}  :  \; |\eta|\leq N/2\Big\}}
the set of subcritical transient configurations. The configuration in Figure \ref{fig:tran_bad} is subcritical transient. Starting from a subcritical transient configuration, the process ultimately reaches a frozen configuration.
\end{itemize}

\bigskip

We now turn to \emph{supercritical configurations}, i.e. such that $K\geq N/2$
\begin{itemize}
\item Once again, because of the kinetic constraint, the process can break up pairs of neighbouring empty sites, but not create them. Indeed, in order for an empty site to ``jump'' next to another would require a particle in-between jumping out, which is not possible for the FEP. For this reason, we define the \emph{ergodic component} for the process, which is the set 
\eq{eq:EN}{\mathcal{E}_N^{\FEP}:=\{\eta\in\Sigma_N :  \; (1-\eta_x)(1-\eta_{x+1})=0 ,\; \forall x\in \T_N\}.}
We call \emph{ergodic configurations}  its elements, which are the set of configurations with isolated empty sites. The configuration in Figure \ref{fig:fep_ergo} is ergodic. 
\item Finally, any supercritical configuration ($K\geq N/2$) particles that is not ergodic will be called \emph{supercritical transient}, and we denote by 
\eqs{\TG^{\FEP}:=\Big\{\eta\in \Sigma_N\setminus\mathcal{E}_N^{\FEP}  :  \; |\eta|\geq N/2\Big\}}
the set of supercritical transient configurations. The configuration in Figure \ref{fig:tran_good} is supercritical transient. As in the subcritical case, starting from a supercritical transient configuration, the process ultimately reaches the ergodic component. 
\end{itemize}
In the case where $N$ is even, note that the alternated configurations $\circ \!\bullet\! \circ\! \bullet \dots \circ \!\bullet$ and $\bullet \!\circ\! \bullet\! \circ \dots\bullet\!\circ$ ($\circ$ representing empty sites and $\bullet$ representing occupied sites) are \emph{critical}, in the sense that they are both \emph{ergodic} and \emph{frozen}. In general, we call \emph{transient state} any configuration that is neither ergodic nor frozen, and denote by 
\eqs{\mathcal{T}_N^{\FEP}=\TB^{\FEP}\cup \TG^{\FEP}} 
the set of transient configurations for the FEP.

\subsection{Transience time for the FEP}

Given an initial configuration $\eta\in \Sigma_N$, we denote by $\noteblue{\Prob_\eta^N}$ the distribution of a FEP with generator $\gene^{\FEP}$  and started from the configuration  $\eta$. We denote by $\noteblue{\E_\eta^N}$ the corresponding expectation.

\medskip

As we were for the SWT, we are interested in the time the FEP takes to leave the transient component. Given $\eta \in  \Sigma_N$, as for the SWT we denote by
\eq{eq:petat}{p_\eta(t)=\Prob_\eta^N(\eta(t)\in \mathcal{T}_N^{\FEP})}
the probability that the process started from $\eta$ is still transient at time $t$. Note that this notation is the same as  in  \eqref{eq:pxit}, but this will not be an issue because of our use of different letters to represent the SWT ($\xi$) and the FEP ($\eta$). Consider the FEP's maximal transience probability and $\varepsilon$-transience time 
\eq{eq:def_time_eps}{p_{N}^{\FEP}(t):=\sup \Big\{p_\eta(t): \eta\in \Sigma_N\Big\} \qquad \mbox{and} \qquad \theta_{N}^{\FEP} (\varepsilon) = \inf \left\{t \ge 0 : p_N^{\FEP}(t) \le \varepsilon\right\},} 
analogous to the equivalent quantities for the SWT defined in \eqref{eq:pkSWT}. 
We start with a uniform estimate of the transience probability analogous to Theorem \ref{thm:transiencetimeSSEP} for the SWT, which will be proved at the end of the section.

\begin{theorem}
\label{thm:unif_transiencetime}
Uniformly in $N$ and in the initial configuration, if $t$ is large, the FEP is w.h.p. no longer transient at a time $t N^2 \log N$, meaning that 
\begin{equation}
\label{eq:time_unif}
\lim\limits_{t \to \infty} \sup_{N \ge 2}p_N^{\FEP}(tN^2\log N) = 0.
\end{equation}
\end{theorem}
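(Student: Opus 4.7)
The plan is to reduce this uniform transience estimate for the FEP to the corresponding estimate for the SWT given by Theorem \ref{thm:transiencetimeSSEP}, by exploiting the FEP-to-SWT mapping that will be constructed in Section \ref{sec:proofthm2.2}. Although the detailed construction of this mapping is deferred to that section, the key properties I would rely on are: (i) every FEP configuration $\eta\in\Sigma_N$ can be encoded as a SWT configuration $\xi(\eta)\in\Gamma_K$ on a ring of size $K=K(\eta)\le N$; (ii) this encoding is dynamically equivariant, in the sense that the FEP trajectory $\eta(\cdot)$ starting from $\eta$ can be coupled with an SWT trajectory $\xi(\eta(\cdot))$ starting from $\xi(\eta)$, with matching timescales; and (iii) under this coupling, $\eta(t)\in\mathcal{T}_N^{\FEP}$ if and only if $\xi(\eta(t))\in\mathcal{T}_K^{\SWT}$.

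Granting (i)--(iii), the conclusion follows from a short monotonicity argument. For any FEP configuration $\eta$, equivariance together with the equivalence of transient states yields
\[
p_\eta(t)=\bP_{\xi(\eta)}^{K}\bigl(\xi(t)\in\mathcal{T}_K^{\SWT}\bigr)\le p_K^{\SWT}(t).
\]
Since $p_K^{\SWT}(\cdot)$ is non-increasing in $t$, and since $K\le N$ forces $K^2\log K\le N^2\log N$ (the cases $K\le 1$ being trivially handled as the SWT is then either empty or freezes in constant time), we have for every $t>0$
\[
p_K^{\SWT}(tN^2\log N)\le p_K^{\SWT}(tK^2\log K)\le \sup_{K'\ge 0}p_{K'}^{\SWT}(tK'^2\log K').
\]
Taking the supremum over $\eta\in\Sigma_N$ gives $p_N^{\FEP}(tN^2\log N)\le \sup_{K'\ge 0}p_{K'}^{\SWT}(tK'^2\log K')$, and the right-hand side tends to $0$ as $t\to\infty$ by Theorem \ref{thm:transiencetimeSSEP}. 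Taking then the supremum over $N\ge 2$ yields \eqref{eq:time_unif}.

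The main obstacle is thus packaged entirely into the construction of the FEP-to-SWT mapping and the verification of properties (i)--(iii). In particular, one must exhibit the encoding as a genuine dynamical coupling with synchronised clocks (so that SWT jump events are in exact correspondence with FEP jump events, rather than being merely comparable in law), and ensure that the SWT size can uniformly be taken no larger than $N$; this latter point is exactly what makes the resulting bound uniform in $N$. Once the mapping is in place, no further probabilistic estimates beyond Theorem \ref{thm:transiencetimeSSEP} and the monotonicity of $t\mapsto p_K^{\SWT}(t)$ are required to conclude.
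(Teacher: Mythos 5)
Your proposal is correct and follows essentially the same route as the paper: the properties (i)--(iii) you assume are exactly the content of Theorem \ref{thm:mapping} (constructed in Section \ref{sec:mapping}), and the paper's proof likewise combines $K=|\eta|\le N$, the preservation of transience under the mapping, the monotonicity in time of the transience probability, and Theorem \ref{thm:transiencetimeSSEP}. The only cosmetic difference is that the paper phrases the monotonicity step pathwise (being transient at time $tN^2\log N$ implies being transient at the earlier time $tK^2\log K$), which is equivalent to your use of the monotonicity of $t\mapsto p_K^{\SWT}(t)$.
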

Using the analogous estimate for the SWT, we prove this result at the end of the section.
Our next result is that the function $p_N^{\FEP}(\cdot)$ also goes through a cutoff at time $t_ {N/2}^\star$ (see \eqref{eq:tNstar}).
\begin{theorem}[Cutoff for the transience time of the  FEP] 
\label{thm:freezetime_cutoff_crit}
There exists $C>0$ such that for all $N \in \N^*$ and $0<\varepsilon<1$,
\begin{equation}
\label{eq:encadrement_t_freeze2}
\left|\theta_{N}^{\FEP}(\varepsilon) - t_{N/2}^\star \right| \le C N^2 \left(1 +\log \frac{\log N}{(1-\varepsilon) \wedge \varepsilon}\right).
\end{equation} 
In particular,  the transience time exhibits cutoff, meaning that for all $0<\varepsilon < 1$,
\begin{equation}\label{eq:cutoff_fep}
\lim\limits_{N\to \infty} \frac{\theta_N^{\FEP}(\varepsilon)}{\theta_N^{\FEP}(\frac14)} = 1.
\end{equation}
\end{theorem}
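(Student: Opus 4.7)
The plan is to deduce Theorem \ref{thm:freezetime_cutoff_crit} from Theorem \ref{thm:cutoff_SWT} by means of the mapping between the FEP and the SWT that is announced in the introduction and constructed in Section \ref{sec:proofthm2.2}. Concretely, I would build a map $\Phi$ sending each FEP configuration $\eta \in \Sigma_N$ to a SWT configuration $\Phi(\eta) \in \Gamma_{K(\eta)}$, where the scaling parameter $K(\eta) \le N/2$ counts (essentially) the minority species of $\eta$; in particular $K(\eta) \approx N/2$ at criticality, which is precisely where the factor $1/4$ in $t_{N/2}^\star \sim N^2 \log N/(4\pi^2)$ comes from. The idea of the construction is to contract each maximal cluster of the minority species into a single SWT particle, and to record the lengths of adjacent majority clusters as the depths of associated traps.

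The two properties of $\Phi$ that the proof relies on are: (i) a dynamical intertwining, so that if $\eta(\cdot)$ has law $\P^N_\eta$ then $\Phi(\eta(\cdot))$ has law ${\bf P}^{K(\eta)}_{\Phi(\eta)}$ with no time change (FEP and SWT jumps both occur at rate $1$ whenever allowed); and (ii) $\Phi$ maps $\mathcal{T}_N^{\FEP}$, $\mathcal{E}_N^{\FEP}$, $\mathcal{F}_N^{\FEP}$ respectively onto the transient, ergodic and frozen components of the corresponding SWT. From (i)--(ii) one reads off the pointwise identity $p_\eta(t) = p_{\Phi(\eta)}(t)$, and since $\Phi$ is surjective onto $\bigcup_{K \le \lfloor N/2 \rfloor} \Gamma_K$, taking suprema over FEP configurations yields
\begin{equation*}
p_N^{\FEP}(t) \;=\; \max_{K \le N/2} p_K^{\SWT}(t) \qquad \text{and hence} \qquad \theta_N^{\FEP}(\varepsilon) \;=\; \max_{K \le N/2} \theta_K^{\SWT}(\varepsilon).
\end{equation*}

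Since $K \mapsto t_K^\star = K^2 \log K/\pi^2$ is strictly increasing, this maximum is attained at $K = \lfloor N/2 \rfloor$. Plugging in the bound \eqref{eq:encadrement_t_freeze} of Theorem \ref{thm:cutoff_SWT} at $K = \lfloor N/2 \rfloor$ and absorbing the $O(N)$ discrepancy between $t_{\lfloor N/2 \rfloor}^\star$ and $t_{N/2}^\star$ into the error produces \eqref{eq:encadrement_t_freeze2}; dividing by $t_{N/2}^\star$ and letting $N \to \infty$ then yields the cutoff \eqref{eq:cutoff_fep} since $N^2(1+\log\log N) = o(N^2 \log N)$. As a byproduct, Theorem \ref{thm:unif_transiencetime} follows immediately, since for any fixed $\varepsilon$, \eqref{eq:encadrement_t_freeze2} gives $\theta_N^{\FEP}(\varepsilon) \le (\tfrac{1}{4\pi^2}+o(1))N^2 \log N$, so that $p_N^{\FEP}(tN^2\log N) \le \varepsilon$ for all $t$ large enough uniformly in $N$.

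The main obstacle is the construction of $\Phi$ together with the verification of its dynamical intertwining without any time rescaling. The delicate cases to handle are (a) FEP isolated particles, which are frozen and must be mapped to SWT particles already dead inside a trap rather than to active ones; (b) jumps at FEP cluster boundaries, where a single allowed FEP exchange must produce exactly one SWT move of the correct type (a particle jump, a particle falling into a trap, or an empty exchange) at the matching rate; and (c) extending the ``lattice path'' height function of \cite{ayre_mixing_2024} from the segment to the periodic ring $\T_N$, as announced in the introduction. Once these points are settled in Section \ref{sec:proofthm2.2}, what remains is the elementary rescaling argument described above.
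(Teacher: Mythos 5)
Your reduction hinges on a mapping $\Phi$ sending \emph{every} FEP configuration $\eta\in\Sigma_N$ to a SWT on a ring of size $K(\eta)\le N/2$ (the ``minority species''), with an exact rate-$1$ intertwining, preservation of the transient/ergodic/frozen components, and surjectivity onto $\bigcup_{K\le N/2}\Gamma_K$ — and you defer its construction as ``the main obstacle''. That obstacle is precisely where the proposal breaks down: no such map is constructed in the paper, and the one you sketch cannot work as stated. The paper's mapping $\Pi$ (Theorem \ref{thm:mapping}) has ring size equal to the \emph{number of particles} $K=|\eta|$, not the minority count: a particle cluster of size $n$ corresponds to $n-1$ SWT particles (pairs of adjacent particles), and a cluster of $n$ empty sites to a single trap of depth $n-1$. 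Your variant (one SWT particle per minority cluster, trap depths given by adjacent majority clusters) cannot be a dynamical intertwining into a fixed $\Gamma_{K(\eta)}$, because the number of clusters is not conserved by the FEP dynamics (clusters merge and split), so the purported ring size changes in time. Consequently, for supercritical $\eta$ (where $K>N/2$) the available mapping only yields the non-sharp bound $\theta^{\FEP}_N(\varepsilon)\le\max_{K\le N}\theta^{\SWT}_K(\varepsilon)$, whose leading term is $t_N^\star$ rather than $t_{N/2}^\star$ — exactly the mismatch recorded in \eqref{eq:encadrement_pas_sharp} — and your claimed identity $p^{\FEP}_N(t)=\max_{K\le N/2}p^{\SWT}_K(t)$ is unsubstantiated (the paper, lacking FEP monotonicity, only obtains one-sided comparisons such as \eqref{eq:critprob}).

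The paper closes this gap differently. For subcritical $\eta$ ($K\le N/2$) it uses $\Pi$ together with SWT attractiveness to dominate $p_\eta(t)$ by the critical transience probability $p^{\SWT}_{\star,K}(t)$ on $\T_K$ with $K\le N/2$. For supercritical $\eta$ it introduces a second mapping, to the facilitated zero-range process on $\T_P$ with $P=N-K\le N/2$, uses the FZR's attractiveness to compare with a critical configuration $\omega^\star\le\omega$, and then maps back to a \emph{critical} SWT on $\T_P$, so that Lemma \ref{lem:t_tran_critique} applies on a ring of size at most $N/2$; the lower bound comes from critical FEP configurations, which correspond exactly to critical SWT configurations on $\T_{\lfloor N/2\rfloor}$. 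Your final rescaling step (absorbing the $t^\star_{\lfloor N/2\rfloor}$ versus $t^\star_{N/2}$ discrepancy and deducing \eqref{eq:cutoff_fep}) is fine, but it is the easy part; without either constructing a genuinely new ``minority-species'' intertwining (which would be a substantial result in itself) or reproducing the FZR-plus-attractiveness detour, the supercritical upper bound — the heart of Theorem \ref{thm:freezetime_cutoff_crit} — is missing.
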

Theorem \ref{thm:freezetime_cutoff_crit} is proved below in Section \ref{subsec:super_crit_FEP}.

\medskip

\begin{remark}[Monotonicity and critical configurations] 
Given those uniform estimates of the maximal transience probability at time $t$, it is natural, as for the SWT in Remark \ref{rem:attract}, to try and identify the configuration(s) with yields the largest transience probability, meaning the $\eta^\star$ such that $p_{\eta^\star}(t)=p_N^{\FEP}(t).$ However, the answer to this question is by no means straightforward, mainly because of the lack of montonicity in the FEP's dynamics.

\medskip

To illustrate this point, consider the effect of adding or removing particles on  the transience probability. Removing particles in a subcritical configuration, one expects, should likely help reaching a frozen state, in the same way that adding particles to a supercritical configuration should likely  help reaching the ergodic component. However, in the absence of attractiveness arguments, those statements cannot be supported, and it is not clear, although we conjecture it, that the initial configuration that maximises the transience time should be a critical configuration, meaning $|\eta^\star|=N/2$ (for $N$ even).

\medskip

Another tricky question is that of the influence of the size of the system over the transience probability. Once again, we expect that as size $N$ of the system increases, the maximal transience probability $p_N^{\FEP}$ at fixed time $t$ should grow as well. This statement is not straightforwardly proved either, and is also left as a conjecture at this point.

\medskip

What is true however, and at the core of our proof of Theorem  \ref{thm:freezetime_cutoff_crit}, is that the transience probability starting from a configuration $\eta$ can be compared to the one started from a critical configuration $\widetilde{\eta}$, but on a smaller ring of size $\widetilde{N}=2(K(\eta)\wedge P(\eta))$, where $K$ and $P$ are respectively $\eta$'s number of particles and empty sites.This yields in particular that  for any time $t$,
\eq{eq:critprob}{p_{\star,N}^{\FEP}(t)\leq p_{N}^{\FEP}(t)\leq \sup_{\substack{N' \leq N \\N'\mbox{\tiny{even}}}}p_{\star,N'}^{\FEP}(t).}
However, because of the lack of monotonicity, it is not clear that the left and right-hand side above are equal.
\end{remark}

\begin{remark} 
\label{rem:fep_diffusif}
Although the ``worst'' transience phase lasts longer than the typical diffusive timescale (of order $N^2$), we  can show (see Remark \ref{rem:sctransience}  below) that for macroscopically supercritical configurations (ie $K(\eta) = \delta N$ with $\delta > \frac{1}{2}$), the transience time is indeed diffusive  instead of $\mathcal{O}(N^2\log N)$. 
\end{remark}

The proof of Theorems \ref{thm:unif_transiencetime} and \ref{thm:freezetime_cutoff_crit} for the FEP strongly rely on their counterparts for the SWT, thanks to a new mapping that we now formally introduce, and whose full construction will be detailed in Section \ref{sec:mapping}. 
\begin{theorem}
\label{thm:mapping}
For any $N\geq K\geq 1$, a FEP taking values in 
\eqs{\Sigma_{N,K}:=\{\eta\in \Sigma_N: |\eta|=K\}.}
can be mapped to a SWT on $\T_K$, meaning that there exists a deterministic dynamical mapping 
\eqs{\Pi^\star:D([0,+\infty), \Sigma_{N,K}) \to D([0,+\infty), \Gamma_K)}
between trajectories $\eta(\cdot)$ and $\xi(\cdot)$, such that 
\eqs{\eta(\cdot)\sim \Prob_{\eta}^N \qquad \Longleftrightarrow \qquad \Pi^\star[\eta]\sim {\bf P}_{\Pi^\star[\eta](0)}^K.}
Furthermore, this mapping preserves ergodic, frozen and transient components, meaning in particular that 
\eq{eq:tran_fep_swt}{\eta(t)\in \mathcal{T}^{\FEP}_N \qquad \Longleftrightarrow  \qquad \Pi^\star[\eta](t)\in \mathcal{T}^{\SWT}_K.}
\end{theorem}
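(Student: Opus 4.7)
The plan is to construct $\Pi^\star$ explicitly by encoding each FEP configuration through the sequence of gaps between consecutive particles on the ring, and then show that this encoding conjugates the FEP generator onto the SWT generator. First I would define the configuration-level map as follows: given $\eta \in \Sigma_{N,K}$, pick a cyclic labeling $1, \dots, K$ of its particles (e.g.\ starting from the first particle clockwise from site $1$ of $\T_N$) and let $n_i(\eta) \in \N$ denote the number of empty sites strictly between particles $i$ and $i+1$ in the cyclic order, so that $\sum_{i \in \T_K} n_i(\eta) = N - K$. Then set
\[\Pi^\star(\eta)_i := 1 - n_i(\eta) \qquad (i \in \T_K),\]
which takes values in $\Gamma_K$ and yields SWT excess $S(\Pi^\star(\eta)) = K - (N-K) = 2K - N$. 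The essential dictionary is: $n_i = 0 \Leftrightarrow \xi_i = 1$ (two adjacent FEP particles encode a SWT live particle), $n_i = 1 \Leftrightarrow \xi_i = 0$ (an isolated FEP empty site encodes a SWT empty site), and $n_i \geq 2 \Leftrightarrow \xi_i \leq -1$ (a run of $n_i$ consecutive empty FEP sites encodes a SWT trap of depth $n_i - 1$).

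To extend the map to trajectories, I rely on the fact that FEP particles only swap with neighbouring empty sites, so their cyclic ordering is preserved by the dynamics; I fix the labeling at time $0$ and track it throughout. The resulting map $t \mapsto \Pi^\star(\eta(t))$ is a measurable c\`adl\`ag function of $\eta(\cdot)$, which I take as the definition of $\Pi^\star[\eta(\cdot)]$.

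The heart of the argument is verifying that $\Pi^\star[\eta(\cdot)]$ has generator $\mathscr{L}^{\SWT}_K$. For this I would enumerate the two types of admissible FEP transitions. A rightward jump of particle $i$ (rate $1$, admissible iff $n_{i-1}(\eta)=0$ and $n_i(\eta)\geq 1$) updates $n_{i-1}\mapsto 1$ and $n_i\mapsto n_i-1$, hence in the image $\xi_{i-1}\mapsto 0$ and $\xi_i\mapsto \xi_i+1$; this is exactly the rate-$1$ SWT transition where the particle at site $i-1$ (present since $\xi_{i-1}=1$) jumps to site $i$ (with $\xi_i \leq 0$ before the jump), either landing on an empty site or dying in a trap. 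A symmetric check handles leftward jumps. One then verifies that this assignment yields a bijection between the admissible FEP transitions at $\eta$ and the admissible SWT transitions at $\Pi^\star(\eta)$, preserving rates, which shows that the generators coincide and hence yields the law equivalence $\eta(\cdot)\sim \Prob_\eta^N \Leftrightarrow \Pi^\star[\eta] \sim {\bf P}^K_{\Pi^\star[\eta](0)}$.

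Preservation of the three components then follows immediately from the dictionary: $\eta \in \mathcal{F}_N^{\FEP}$ iff $n_i\geq 1$ for all $i$ iff $\xi_i\leq 0$ for all $i$ iff $\Pi^\star(\eta) \in \mathcal{F}_K^{\SWT}$, and $\eta \in \mathcal{E}_N^{\FEP}$ iff $n_i\leq 1$ for all $i$ iff $\xi_i\geq 0$ for all $i$ iff $\Pi^\star(\eta) \in \mathcal{E}_K^{\SWT}$; taking complements yields \eqref{eq:tran_fep_swt}. The main obstacle I anticipate is a clean bookkeeping in the generator verification, since the underlying rings $\T_N$ and $\T_K$ have different sizes: the bijection between admissible transitions on the two sides must be laid out explicitly, with particular care near FEP particles where both a rightward and a leftward jump may be admissible simultaneously, and one must argue that FEP clock rings across edges that induce no FEP jump also induce no change in the image, so that no spurious transitions are introduced on the SWT side.
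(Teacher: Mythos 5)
Your construction is the same as the paper's: your $\Pi^\star(\eta)_i = 1 - n_i(\eta)$ coincides with the paper's $\xi_k = 2 + X_k - X_{k+1}$, the dynamical extension by tracking the (order-preserving) particle labels is identical, and both the transition-by-transition generator check and the dictionary argument for frozen/ergodic/transient states mirror the paper's case analysis of rightward/leftward jumps onto empty sites or traps. The proposal is correct; it merely spells out as an explicit rate-preserving bijection of admissible transitions what the paper leaves as "straightforward to check."
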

Not that in order to keep notations simple, we denote the mapped trajectory by $\Pi^\star[\eta]$ rather than $\Pi^\star[\eta(\cdot)]$, but, of course, the latter depends on the whole trajectory $\eta(\cdot)$ and not only on its initial configuration. We also denote by
\eqs{\Pi(\eta):=\Pi^\star[\eta](0)}
the statical mapping between initial configurations.

\begin{proof}[Proof of Theorem \ref{thm:unif_transiencetime}]
Clearly, \eqref{eq:time_unif} is a consequence of Theorem \ref{thm:transiencetimeSSEP}. Indeed, given a trajectory $\eta(\cdot)$ of the FEP, according to Theorem \ref{thm:mapping}, $\xi(\cdot):=\Pi^\star[\eta]$  is a SSEP with traps on $K=|\eta|\leq N$ sites. Then, using that $t K^2 \log K\leq t N^2 \log N$ and \eqref{eq:tran_fep_swt}, we have
\eqs{ \eta(t N^2 \log N)\in \mathcal{T}^{\FEP}_N\quad \Longleftrightarrow \quad \xi(t N^2 \log N)\in \mathcal{T}^{\SWT}_K  \quad \Longrightarrow \quad \xi(t K^2 \log K)\in \mathcal{T}^{\SWT}_K .}
In particular, for any initial configuration $\eta\in \Sigma_{N,K}$

\begin{align*}
    p_{\eta}(tN^2 \log N)&= {\mathbf P}_{\Pi(\eta)}^K\big(\xi(tN^2\log N)\in \mathcal{T}^{\SWT}_K\big) \\
    & \le {\mathbf P}_{\Pi(\eta)}^K\big(\xi(tK^2 \log K)\in \mathcal{T}_K^{\SWT}\big) \\
    & \le \sup_{K\geq 0}p_{K}^{\SWT}(t K^2 \log K).
\end{align*}
Taking the supremum over all configurations $\eta\in \Sigma_N$ proves Theorem \ref{thm:unif_transiencetime}.
\end{proof}

\subsection{Mapping of the FEP to a SWT}
\label{sec:mapping}
We now build a mapping between the SWT and the FEP, and prove Theorem \ref{thm:mapping}. We start by the statical mapping that will allow us to map initial configurations, and then define a dynamical mapping between trajectories. This mapping generalises the ``lattice path'' construction, developed in \cite[Section 3.1]{ayre_mixing_2024} for the FEP on a segment, to the FEP on the circle. Indeed, Equation \eqref{eq:xieta} bears some resemblance with Equation (3.1) in \cite{ayre_mixing_2024}, which transforms a FEP on a segment into a height function. The novelty of our mapping is that we define it on the circle, and interpret it as a particle system (a SWT), which allows to study the FEP on a circle, when the height function approach could only be used on the segment.

\subsubsection{Statical mapping}

Fix an exclusion configuration $\eta$, that we will map to a SWT configuration $\xi$. Recall that we distinguish between the space variable $x$ for the FEP, and $k$ for the SWT. Assume that $\eta\in\Sigma_N$ is a non empty configuration of the FEP on $\T_N$, meaning that its total number of particles $\noteblue{K=K(\eta):=\abs{\eta}}\geq 1$ is not zero. We tag arbitrarily the first particle at or to the right  of  the origin in $\eta$ and denote by $\noteblue{X_1\in \T_N}$ its position, meaning that $\eta_0=\dots=\eta_{X_1-1}=0$. Denote similarly $\noteblue{X_1<X_2<X_3<\cdots <X_K}$ the successive positions in $\eta$ of particles to the right of the origin. Note in particular that we can trivially rewrite 
\eq{eq:etaXi}{\eta_x=\sum_{k=1}^K {\bf1}_{\{X_k=x\}},}
since in the right-hand side at most one indicator function is not $0$.
\medskip

As illustrated in Figure \ref{fig:stat}, we build the configuration $\xi\in \Z^{\T_K} $  by letting for all $k\in \T_K$, 
\begin{equation}
\label{eq:xieta}
\xi_k = 2 + X_k - X_{k+1}.
\end{equation}
\begin{figure}
\centering
\includegraphics[width=11.5cm]{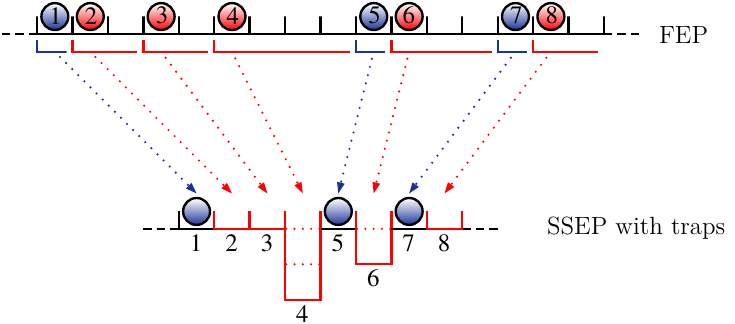}
\caption{Illustration of the statical mapping $\eta\mapsto \xi$, which associates particles in $\eta$ with sites in $\xi$. Particles that are followed by another particle in the $\eta$ are represented in blue and become occupied sites in $\xi$. Particles that are followed by one or more empty sites in $\eta$ are represented in red and become empty sites or traps in $\xi$.}
\label{fig:stat}
\end{figure}
    \begin{figure}
        \centering \includegraphics[width=10cm]{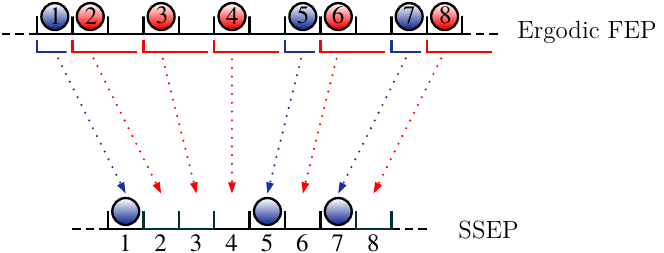}
        \caption{When a FEP configuration $\eta$ is ergodic, the mapped configuration $\xi=\Pi(\eta)$ is a SSEP configuration.}\label{fig:mapergo}
    \label{fig:ergomap}
    \end{figure}
\begin{figure}
        \centering \includegraphics[width=10cm]{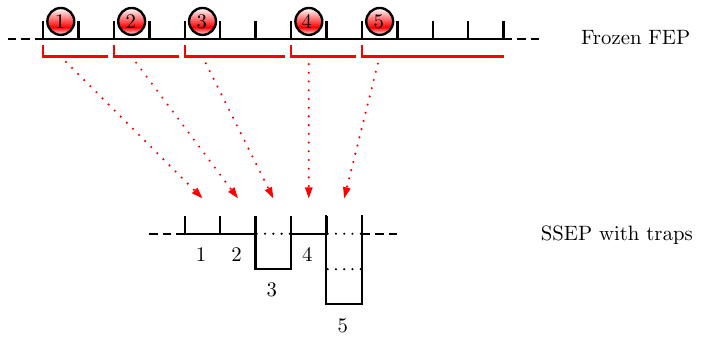}
        \caption{When a FEP configuration $\eta$ is frozen, the mapped configuration $\xi=\Pi(\eta)$ only  has traps, no particles.}\label{fig:mapfro}
    \label{fig:frozenmap}
\end{figure}

Note that $\xi_k$ is worth $1$ minus the number of consecutive empty sites after $X_k$ in $\eta$, i.e. after the $k$-th particle in the FEP. Because of the exclusion rule in the FEP, there can be at most one particle at each site in $\xi$, so $\xi \in \Gamma_K$. We thus interpret $\xi$ as a SWT configuration on $\T_K$.

\medskip

The application $\eta \mapsto (\xi, X_1)$ is clearly $1$-to-$1$ : the configuration $\xi$ is uniquely defined by $\eta$, and similarly, given $\xi$ and a (suitable) position $X_1$ of the first particle right of the origin, the full configuration $\eta$ can be recovered using \eqref{eq:xieta} and \eqref{eq:etaXi}.  In what follows, for any $1\leq K\leq N$ we will denote by 
\eq{eq:defpi}{\Pi:\{\eta\in\Sigma_N, \; |\eta|=K\} \; \longrightarrow \; \Gamma_K\times \T_N}
the previous mapping, meaning that given a configuration $\eta$ and a tagged occupied site $X_1$ in $\eta$, we define 
$\Pi(\eta)=(\xi, X_1)$, where $\xi$ is defined through \eqref{eq:xieta}. For the sake of brevity, we will abuse our notation, and simply write $\xi=\Pi(\eta)$, ignoring the position of the tagged particle, since we are interested in the transient properties of both processes, that do not depend on the starting point of the mapping.

As illustrated in Figure \ref{fig:ergomap}, an ergodic configuration (see \eqref{eq:EN}) of the FEP is mapped to a SSEP configuration without traps, i.e. an ergodic SWT configuration, meaning
\eqs{\eta\in\mathcal{E}_N^{\FEP}, \; |\eta|=K \quad \Longrightarrow \quad  \xi:=\Pi(\eta)\in \mathcal{E}_K^{\SWT} = \Sigma_K.}
This mapping in the ergodic case was already known, and was successfully exploited in previous works \cite{ayre_mixing_2024, AGLS23}. The mapping to the SWT, however, in the general case, is to our knowledge a novelty of our work.
Similarly, as illustrated in Figure \ref{fig:frozenmap}, a frozen FEP configuration (see \eqref{eq:FN}) is mapped into a configurations with no particles, so a frozen SWT (see \eqref{eq:FK}), meaning 
\eqs{\eta\in\mathcal{F}_N^{\FEP}, \; |\eta|=K \quad \Longrightarrow \quad  \xi:=\Pi(\eta) \in \mathcal{F}_K^{\SWT}.}

\medskip

In what follows, both for $\eta$ and $\xi $, we call a \emph{particle cluster of size $n$} a sequence of $n$ consecutive occupied sites, surrounded on both sides by empty sites or traps. Analogously, we call \emph{empty sites cluster of size $n$} a sequence of $n$ consecutive empty sites in $\eta$, surrounded by two particles. It is straightforward to see that a cluster of $n$ particle in $\xi$ correspond to a  particle cluster of $n+1$ particles in $\eta$. Analogously, a cluster of $n$ empty sites in $\eta$, surrounded by the $k$-th and $k+1$-th particles,  corresponds to site $k$ being a trap of depth $n-1$ in $\xi$, i.e. $\xi_k=-n+1$.

\subsubsection{Dynamical mapping}
\label{sec:dynamicmapping}
We now describe the effect of the FEP dynamics started from $\eta$ on the mapped configuration started from $\Pi(\eta)$. Dynamically speaking, given a cluster of at least $2$ particles in $\eta$, the possible jumps of the cluster's particles are the rightmost particle jumping to the right (resp. the leftmost particles jumping to the left). We will see that this corresponds exactly to a jump of the rightmost particle to the right (resp. the leftmost particle to the left) in the corresponding cluster in $\xi$. 

\smallskip

More precisely, we now account for all possible jumps in the facilitated exclusion process $\eta$, and consider their effect on the mapped configuration $\xi:=\Pi(\eta)$. 
\begin{enumerate}[i)]
\item We first take the case, illustrated in Figure \ref{fig:saut_vide_droit}, in which in $\eta$, the $k$-th particle and $k+1$-th particles are neighbours, and followed by an isolated empty site. When the $k+1$-th particle jumps rightwards  to the neighbouring empty site in  $\eta$, the  particle on site $k$ in $\xi$ jumps to site $k+1$.
\item Next, we assume that the $k$-th particle and $k+1$-th particles in $\eta$ are neighbours, but the following site is empty and is the first of a cluster of $n\geq 2$ empty sites. When the $k+1$-th particle jumps rightwards in $\eta$ to the neighbouring empty site, the particle at site $k$ in $\xi$ jumps towards a trap of depth $n-1$, and fills it up by $1$ : the particle disappears, and the trap is now of depth $n-2$. This is illustrated in Figure \ref{fig:saut_droite_piege}.
\item Third, we consider the case where  the $k$-th particle and $k+1$-th particles in $\eta$ are neighbours, and preceded by an isolated empty site. Then, as illustrated in Figure \ref{fig:saut_vide_gauche}, when the $k$-th particle jumps leftwards in $\eta$ to the neighbouring empty site, the particle on site $k$ in $\xi$ jumps to site $k-1$.
\item Finally, the last case to consider is the one represented in Figure \ref{fig:saut_gauche_piege}, where the $k$-th particle and $k+1$-th particles in $\eta$ are neighbours, and preceded by a cluster of $n\geq 2$ empty sites. Then, when the $k$-th particle jumps leftwards in $\eta$ to the neighbouring empty site, the particle at site $k$ in $\xi$ jumps leftwards towards a trap of depth $n-1$, and fills it up by $1$ as in case ii).
\end{enumerate}
\begin{figure}
    \centering
    \begin{subfigure}[t]{0.45\textwidth}
        \vspace{0pt}
        \includegraphics[width=\textwidth]{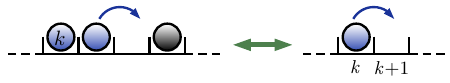}
        \vspace{0pt}
        \caption{Rightward jump to an empty site}\label{fig:saut_vide_droit}
    \end{subfigure}
        \hfill
		\begin{subfigure}[t]{0.45\textwidth} 
		\vspace{0pt}
        \includegraphics[width=\textwidth]{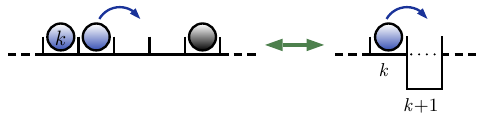}
        \caption{Rightward jump to a trap}\label{fig:saut_droite_piege}
    \end{subfigure}
    
     \begin{subfigure}[t]{0.45\textwidth} 
     \vspace{0pt}
        \includegraphics[width=\textwidth]{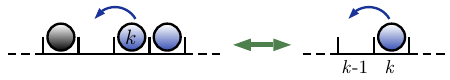}
        \vspace{0pt}
        \caption{Leftward jump to an empty site}\label{fig:saut_vide_gauche}
    \end{subfigure}   
        \hfill
		\begin{subfigure}[t]{0.45\textwidth} 
		 \vspace{0pt}      			
        \includegraphics[width=\textwidth]{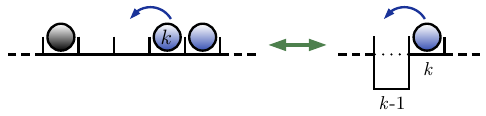}
        \caption{Leftward jump to a trap}\label{fig:saut_gauche_piege}
    \end{subfigure}		
    \caption{Examples of jumps through the mapping. In each subfigure,The FEP is represented on the left and the corresponding SWT on the right. }\label{fig:ex_sauts}
\end{figure}

We can therefore define a dynamical mapping between $\eta(\cdot)$ and the corresponding SSEP with traps. To do so, consider a trajectory $\{\eta(t),\;t\geq 0\}$ of the FEP started from an initial configuration $\eta(0)\in \Sigma_N$ with $K:=|\eta(0)|$ particles. We tag, as in the statical mapping, the first particle at or to the right of the origin. We denote $X_1(0)$ its initial position, and we keep track of its trajectory throughout the process, and denote by $X_1(t)$ its position at time $t$. Since particles cannot cross eachother, we can do the same for all consecutive particles, and define
\eqs{X_1(t)<X_2(t)< \dots <X_K(t)}
as the trajectories of the $K$ particles in the system. As in \eqref{eq:xieta}
we can then define
\begin{equation}
\label{eq:xietat}
\xi_k(t)= 2 + X_k(t) - X_{k+1}(t).
\end{equation}
Given a FEP trajectory $\eta=\{\eta(t), \; t\geq 0\}$, we denote by $\noteblue{\Pi^\star[\eta]:=\{\xi(t), \; t\geq 0\}}$ the trajectory defined by the dynamical mapping \eqref{eq:xietat}. Note that unless at time $t$, the tagged particle is still the first particle at or to the right of the origin, we \emph{do not have} $\Pi(\eta(t))=\Pi^\star[\eta](t)$.
It is straightforward to check that, as represented in Figure \ref{fig:xi}, the process $\{\xi(t), \; t\geq 0\}=\Pi^\star[\eta]$ is a SWT, started from $\xi(0):=\Pi(\eta(0))$, and with generator \eqref{eq:genexi}.

\medskip

Note that in the FEP, the length of a cluster of empty sites can only decrease, because of the kinetic constraint, which corresponds to the fact that in the SSEP with traps, the depth of traps can only decrease.  This construction proves Theorem \ref{thm:mapping}.

\begin{figure}
\centering
\includegraphics[width=13cm]{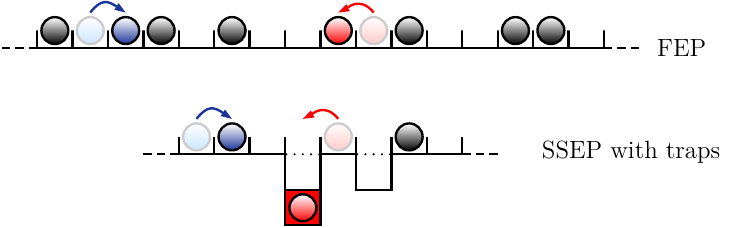}
\caption{Illustration of the dynamical mapping $\eta(t)\mapsto \xi(t)$. A pair of neighbouring particles in the FEP corresponds to a particle in the SSEP with traps. When the right particle of the pair jumps to the right in the FEP, it corresponds to a jump to the right of the particle in the SSEP with traps. Notice the blue particle in the FEP joins a new pair of particles, so in the SSEP with traps the particle doesn't disappear. However, the red particle becomes isolated in the FEP, so in the SSEP with traps it falls into a trap.}
\label{fig:xi}
\end{figure}

\subsubsection{Mixing time for the FEP}
\label{sec:FEPmixing}

Given the sharp estimate on the SWT's mixing time and the previous mapping, it is natural to consider the mixing time of the FEP on the ring. In \cite{ayre_mixing_2024} the latter is studied for a subclass of initial configurations, limited by the estimation of the transience time. Since this limitation is lifted by Theorem \ref{thm:freezetime_cutoff_crit}, we expect that in the general case, the FEP's mixing time on the ring should be obtainable.

\smallskip

It is not hard to show that the mixing time of a FEP on $\T_N$ with $K > N/2$ particles is lower bounded by the mixing time of a SWT on $\T_K$ with $2K-N$ excess particles.
However, upper bounding the mixing time of the FEP by that of the SWT is not straightforward, because our mapping is defined up to the position of a tagged particle (or, symmetrically, up to the current crossing the origin in the SWT). This is not an issue for the transience time, which is not affected by this translation of the system. This is, however, an issue for the mixing time, over which the translation of the system by a random quantity could a priori have an impact. To overcome this issue, and translate the estimation of the SWT's mixing time to the FEP's, one would need to get detailed understanding of the joint distribution of the SWT's configuration and its current through the origin up to time $t$. This interesting question is at this point left for future work.

\smallskip

Still, even though a sharp estimate is not immediately achievable, our refined transience time estimates yield straightforwardly the following result.

\begin{proposition}[Mixing time for the macroscopically supercritical  FEP]
\label{prop:mix_fep}
Let $K(N)$ a sequence such that for all $N$, $K(N) > N/2$ and $\lim\limits_{N \to \infty} K(N)/N = \rho \in (\frac12,1)$. \\
Then, for all $\varepsilon \in (0,1)$, setting $\tau^{\FEP}_{N,K(N)}(\varepsilon)$ the $\varepsilon$-mixing time for the FEP on $\T_N$ with $K(N)$ particles, there exists $C_{\varepsilon, \rho} > 0$ such that
\eqs{\limsup\limits_{N\to\infty} \frac{\tau^{\FEP}_{N,K}(\varepsilon)}{N^2 \log N} \le C_{\varepsilon,\rho}.}
\end{proposition}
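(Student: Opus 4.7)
The plan is to combine the transience time estimate from Theorem \ref{thm:freezetime_cutoff_crit} with a mixing time bound for configurations already in $\mathcal{E}_N^{\FEP}$. First, by Theorem \ref{thm:freezetime_cutoff_crit} there exists $T_1 = T_1(\varepsilon) = O(N^2 \log N)$ such that for every $\eta \in \Sigma_N$,
\[ \P_\eta^N\big(\eta(T_1) \in \mathcal{T}_N^{\FEP}\big) \leq \varepsilon/3. \]
Under the macroscopic-supercriticality assumption, Lemma \ref{lem:t_tran_critique} (equivalently Remark \ref{rem:fep_diffusif}) even yields a diffusive $T_1 = O(N^2)$, but this improvement is not essential. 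By the strong Markov property applied at $T_1$ together with the triangle inequality for $d_{\textsc{tv}}$, it then suffices to bound
\[ \sup_{\eta_0 \in \mathcal{E}_N^{\FEP} \cap \Sigma_{N,K(N)}} d_{\textsc{tv}}\big(\P_{\eta_0}^N(\eta(s) \in \cdot), \pi_{N, K(N)}\big) \leq 2\varepsilon/3 \]
for some $s = O(N^2 \log N)$.

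For any $\eta_0 \in \mathcal{E}_N^{\FEP}$ with $K = K(N)$ particles, the dynamical mapping $\Pi^\star$ from Theorem \ref{thm:mapping} converts the FEP trajectory into a SWT trajectory on $\T_K$ that lives in the ergodic component $\mathcal{E}_K^{\SWT}$ and carries $2K - N$ particles, i.e.\ is a plain SSEP. The macroscopic hypothesis $K(N)/N \to \rho \in (\tfrac12, 1)$ ensures that both $2K - N$ and $N - K$ are of order $N$, so Lacoin's cutoff result \cite{lacoin_simple_2017} for the SSEP on the ring yields that the shape projection $\xi(t) = \Pi^\star[\eta](t)$ mixes to its uniform distribution on $\Sigma_{K, 2K-N}$ within time $T_2 = C_\rho N^2 \log N$.

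What remains is to upgrade mixing of $\xi(t)$ into mixing of the full FEP configuration $\eta(t)$. Using the notation of Section \ref{sec:mapping}, $\eta(t)$ is uniquely determined by the pair $(\xi(t), X_1(t))$, where $X_1(t) \in \T_N$ is the position of the tagged particle; under $\pi_{N, K(N)}$ the marginal of $X_1$ is uniform on $\T_N$ by rotational invariance of the FEP. Since $X_1(t)$ moves only by local jumps of the first particle to the right of the origin, its marginal mixes on the diffusive scale $O(N^2)$, via a comparison with a symmetric random walk on $\T_N$.

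The main obstacle is that $\xi(t)$ and $X_1(t)$ are not independent, which is precisely the issue highlighted in the paragraph preceding the statement and which prevents a sharp cutoff for the FEP's mixing time. For the coarse upper bound $O(N^2 \log N)$ claimed here, however, it will suffice to construct a graphical coupling of two FEPs that first uses the SSEP coupling of \cite{lacoin_simple_2017} to align the shapes after time $T_2 = O(N^2 \log N)$, and then aligns the offsets via a random-walk coupling in an additional $O(N^2)$ time. Controlling the joint total variation distance by the sum of the two marginal contributions and combining with the transience phase $T_1$, one obtains $\tau_{N,K(N)}^{\FEP}(\varepsilon) \leq T_1 + O(N^2 \log N) = O(N^2 \log N)$, which is the advertised bound.
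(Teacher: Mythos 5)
Your overall decomposition (exit the transient component, then mix within the ergodic component) is the right one, and your first phase is fine: Remark \ref{rem:fep_diffusif} even gives a diffusive $T_1=\mathcal{O}(N^2)$ under the macroscopic supercriticality assumption. The gap is in the second phase. You propose to transfer the SSEP mixing estimate of \cite{lacoin_simple_2017} through the mapping $\Pi^\star$ and then ``upgrade'' it by also mixing the offset $X_1(t)$, bounding the joint total variation distance ``by the sum of the two marginal contributions''. That inequality is simply not available: total variation mixing of the pair $(\xi(t),X_1(t))$ does not follow from mixing of each marginal unless you control their joint law, and the two are strongly dependent (the offset is a functional of the very same jumps that drive the shape). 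Concretely, your two-stage coupling does not work as stated: once the two copies' shapes are matched under the Lacoin coupling they perform the same jumps, so the difference of their offsets stops evolving; to realign the offsets you must let the copies perform different jumps, which destroys the shape matching, and you give no mechanism (nor time estimate) for doing both simultaneously. This is exactly the obstruction spelled out in Section \ref{sec:FEPmixing}: an upper bound for the FEP's mixing time via the SWT mapping would require understanding the joint distribution of the SWT configuration and the current through the origin, which is explicitly left open. So as written, your argument assumes the missing ingredient rather than proving it.

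The paper avoids this issue entirely: once in the ergodic component, it does not go through the mapping at all, but invokes \cite[Proposition 4.2]{ayre_mixing_2024}, which bounds the log-Sobolev constant of the ergodic FEP on $\T_N$ with $K$ particles and hence yields an $\mathcal{O}(N^2\log N)$ mixing bound directly for the FEP dynamics restricted to $\mathcal{E}_N^{\FEP}$; combined with the diffusive transience bound of Remark \ref{rem:fep_diffusif} and the Markov property, this gives the stated $C_{\varepsilon,\rho}N^2\log N$ bound. If you want to salvage your route, you would need either an analogous functional-inequality argument for the FEP itself, or a genuine coupling of shape and offset (equivalently, of the SWT and its current), which is a strictly harder problem than the crude bound requires.
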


This upper-bound generalises \cite[Theorem 2.3(b)]{ayre_mixing_2024}, since we make no assumption on the initial configuration, whereas the result in \cite{ayre_mixing_2024} holds for configurations in which a fixed number of ergodic regions contain enough particles. It can be straightforwardly obtained by combining \cite[Proposition 4.2]{ayre_mixing_2024}, which bounds the log-Sobolev constant of an ergodic FEP on $\T_N$ with $K$ particles, and Remark \ref{rem:fep_diffusif} which upper-bounds the transience time of the FEP.

\section{Proof of Theorem \ref{thm:transiencetimeSSEP}}
\label{sec:proof_tran_swt}
We now prove Theorem \ref{thm:transiencetimeSSEP}, the uniform estimate for the SWT's transience time, that we used in the previous section to prove Theorem \ref{thm:unif_transiencetime}.

\subsection{Labelled SSEP with traps}
\label{subsub:echange}

In order to estimate the transience time for the SWT and prove Theorem \ref{thm:transiencetimeSSEP}, we will use a union bound to consider the trajectory of each individual particle, and estimate the probability that they have explored the whole ring. For Theorem \ref{thm:cutoff_SWT}, we will also consider the trajectory of each particle, to know if the particles have reached the traps. However, this is not very convenient under the exclusion rule, because particles cannot cross each other, so that  any given tagged particle has a heavily constrained and non-diffusive trajectory in general. 
For this reason, we consider instead the following  construction for the SSEP with traps with \emph{distinguishable} particles, in which each particle is labelled, and neighbouring particles exchange their positions at rate $1$. Doing so, individual particles perform (dependent) symmetric random walks, until they reach a trap where they get killed as expected. We now formalise this construction, which can be interpreted as a generalisation of the so-called \emph{interchange process} introduced in \cite{TothInterchange}.

 \medskip

 Fix $\xi\in \Gamma_K$ a configuration of the SSEP with traps, and denote by 
\eqs{
n:=\left|\{k\in \T_K : \xi_k=1\}\right|
}
its number of particles, and give each particle an arbitrary label $j=1,\dots, n$. Let us define $\Xi_j(0)\in\T_K$ as the initial position of the particle labelled $j$. We also define variables $\delta_j(0)=1$, which are initially set to $1$ to indicate that each particle is initially live. We will set instead $\delta_j=0$ whenever particle $j$ is killed by a trap. 
 We denote by \eqs{\hat \xi(0)=(\xi, \Xi_1,\dots,\Xi_n, \delta_1,\dots,\delta_n)(0)\in \Gamma_K\times \T_K^n\times \{0,1\}^n.}
 this initial labelled SWT configuration, and define a Markov process 
\eqs{
\hat \xi(t)=(\xi, \Xi_1,\dots,\Xi_n, \delta_1,\dots,\delta_n)(t)
}
whose evolution we now describe.

\medskip

In $ \hat \xi$, $\xi$ is a SSEP with traps, that we assume defined by a set of independent Poisson clocks $\{\mathscr{T}_{k}: k\in \T_k\}$ ringing at rate $1$, each describing the times at which the edge $(k,k+1)$ is activated. Each variable  $\noteblue{\Xi_j(t)}$ represents the position at time $t$ of the particle labelled $j$, and $\noteblue{\delta_j(t)}$ is equal to $1$ if the particle labelled $j$ is live at time $t$, and $0$ otherwise. The process $ \hat \xi$'s time evolution is then determined by these Poisson clocks, with the following rules; when  the clock $\mathscr{T}_{k}$ rings, the following happens depending on the value of $\xi$ at sites $k$ and $k+1$;
\begin{itemize}
\item {[\emph{Two frozen sites}]} if both $\xi_{k}\leq 0$ and $\xi_{k+1}\leq 0$, nothing happens.
\item {[\emph{Particle jump to a trap}]} If $\xi_{k} = 1$ and $\xi_{k+1}< 0$, the particle at site $k$, labelled $j$, jumps to the trap at site $k+1$ and remains dead there forever, keeping its label. More precisely, $\Xi_j$ jumps from $k$ to $k+1$ and $\delta_j$ from $1$ to $0$, and both keep those new values forever. The trap loses $1$ depth, so that $\xi_k$ changes from $1$ to $0$ and $\xi_{k+1}$ increases by $1$. The same construction holds if instead $\xi_{k} <0$ and $\xi_{k+1}=1$, but with opposite  roles  for $k$ and $k+1$.
\item {[\emph{Particle jump to an empty site}]} If $\xi_{k} = 1$ and $\xi_{k+1} = 0$, the particle at site $k$, labelled $j$, jumps to $k+1$, once again keeping its label but this time remaining live, so that $\delta_j$ remains unchanged, $\Xi_j$ jumps from $k$ to $k+1$, and $\xi_k$ and $\xi_{k+1}$ exchange their values. As before, is instead $\xi_{k} = 0$ and $\xi_{k+1} = 1$, the same construction holds with opposite roles for $k$ and $k+1$.
\item {[\emph{Two particles swap positions}]} Finally, if $\xi_{k} = \xi_{k+1} = 1$, the particle at site $k$, labelled $j$ and the one at $k+1$, labelled $j'$ exchange their positions, meaning that $\xi$ remains unchanged, $\Xi_j$ jumps from $k$ to $k+1$ and $\Xi_{j'}$ from $k+1 $ to $k$.
\end{itemize}

\medskip

It is not hard to check that the following affirmations are guaranteed by this construction:
\begin{enumerate}[i)]
\item if $\delta_j(t)=\delta_{j'}(t)=1$, then $\Xi_j(t)\neq \Xi_{j'}(t)$; in other words, two live particles cannot coexist on the same site.
\item The sets $\{k\in \T_K :\;\xi_k(t)=1\}$ and $\{\Xi_j(t),1\leq  j\leq n :  \delta_j(t)=1\}$ are identical: the set of occupied sites  in $\xi$ is at all time the set of live labelled particles.
\item Trivially, the value of $\xi_k$ at time $t$ is given by the initial depth of its trap (which can be $0$), plus the number of labelled particles which are currently there, which translates as the identity 
\eqs{\xi_k(t)=\xi_k(0){\bf 1}_{\{\xi_k(0)\leq 0\}} +\sum_{j=1}^n {\bf 1}_{\{\Xi_j(t)=k\}}.}
\end{enumerate}
Forgetting in $\hat \xi(t)$ the labels $\Xi_j(t)$'s and the $\delta_j$'s, the resulting process $\xi(t)$ is clearly a SSEP with traps, driven by the generator $\mathscr{L}^{\SWT}_K$ defined in \eqref{eq:genexi}. The upside of this construction is that, for any $1\leq j\leq n$, the trajectory $\{\Xi_j(t), \;t\geq 0\}$ of the particle labelled $j$ is a random walk, jumping at rate $1$ to either of its neighbours, and interrupted only when it falls in a trap of positive depth. Of course, those trajectories are not independent from one another, because two of them cannot be at the same time at the same site. Nevertheless, they behave as random walkers, which is what we needed.
\bigskip

\subsection{Proof of Theorem \ref{thm:transiencetimeSSEP}}
\label{sec:proof_th_3}

We now have the tools we need to prove Theorem \ref{thm:transiencetimeSSEP}. Recall that we wish to bound  from above the probability for a SSEP with traps to be transient.

\medskip

We first couple a labelled SWT $\hat \xi(t)=(\xi, \Xi_1,\dots,\Xi_n, \delta_1,\dots,\delta_n)(t)$ with a labelled SSEP $\hat\sigma (\cdot)$, by removing all the traps from the initial state, but using the same Poisson clocks $\mathscr{T}_{k}$ and the same initial labels for both processes. More precisely, fix an initial SWT configuration $\xi$, and  define an initial SSEP configuration $\sigma\in \Sigma_K$ by $\sigma_k=0\vee \xi_k$ for $k\in \T_K$. Choose the same initial labels $(Y_j)$ in $\sigma$ and $\xi$, meaning  $Y_j(0)=\Xi_j(0)$ for  $j\in \{1,\dots, n\}$. Since there are no traps in $\sigma$, we have no need, in $\hat \sigma$, for the auxiliary variables $\delta_j(t)$ which remain constant equal to $1$, and we can proceed as in Section \ref{subsub:echange} to build a labelled  SSEP
\eq{eq:Defsigmahat}{
\hat \sigma(t)=(\sigma, Y_1,\dots,Y_n)(t)\in \Gamma_K\times \T_K^n.
}
started from $(\sigma, Y_1,\dots,Y_n)(0)$, where $\{Y_j(t), \; t\geq 0\}$ represents the trajectory in $\T_K$ of the particle labelled $j$. Note that the trajectories $(Y_1, ... Y_n)(\cdot)$ form an \textit{interchange process}, studied in \cite{TothInterchange,HS21} and used in \cite{morris_mixing_2006,tran_cutoff_2022}.

Define the moment $\tau_j$ at which the particle labelled $j$ falls into a trap, namely 
\eqs{\tau_j:=\inf\{t\geq 0 : \delta_j(t)=0\}.}
Since the SSEP with traps is attractive (see Section \ref{subsec:att_swt}), $\xi(t)\leq \sigma(t)$ for any $t\geq 0$, and the only discrepancies between the two processes are particles that fell in a trap in $\hat\xi$. Indeed, since we used the same labels and the same clocks, the only way for particle $j$ to be in different positions in $\sigma(t)$ and $\xi(t)$ is to have fallen into a trap, therefore for any $t$, any $j$,
\eq{eq:idXiY}{
\Xi_j(t) = Y_j(t \wedge \tau_j) .
}

In particular, we claim that given a trajectory $\hat \xi$ of the labelled SSEP with traps, and the associated SSEP $\hat \sigma$,
\eq{eq:nottransient}{
\xi(t)\in \mathcal{T}^{\SWT}_K \qquad \Longrightarrow \qquad  \exists j: \; \{Y_j(s),\; s \le t\} \neq \T_K.
}
Indeed, assume that $\xi$ is in a transient state at time $t$, there must exist, up to time $t$, at least one live particle, labelled $j$, and one trap with positive depth at some site $k\in\T_K$. This means that the partial trajectory $\{\Xi_j(s), \;s\leq t\}$ does not contain $k$, otherwise  particle $j$ would have fallen in the trap (which has positive depth since time $0$) at its first visit at site $k$. But since the particle is still live, according to \eqref{eq:idXiY}, we must have $t\leq \tau_j$, so that
\eqs{k\notin \{\Xi_j(s) : s \le t\} =\{Y_j(s) : s \le t\},}
which proves \eqref{eq:nottransient}.  For simplicity, we still denote by ${\bf P}_\sigma^K$ the distribution of the labelled SSEP on $\T_K$ started from the configuration $\sigma$ with arbitrary initial labels, thanks to \eqref{eq:nottransient}, to prove Theorem \ref{thm:transiencetimeSSEP} it is enough to show that  
\eq{eq:QKnoneploration}{\lim_{t\to\infty}\;\sup_{K\geq 0}\; \sup_{\sigma\in \Sigma_K}{\bf P}_\sigma^K\big(\exists j, \{Y_j(s) : s \le t K^2\log K\} \neq \T_K)=0.}

\medskip

 Let now $\sigma \in \Sigma_K$ be an initial configuration with $n\le K$ particles. Further denote by $\noteblue{\P}$ the distribution of a rate $1$ random walk $\{Y(t),\; t\geq 0\}$ on $ \T_K$ started from the origin ($Y(0)=0$). Then, by union bound, since $n\leq K$, 
\eq{eq:QTk}{{\bf P}_\sigma^K\big( \exists j\leq n, \{Y_j(s),\; s\leq tK^2\log K\}\neq \T_K\big)\leq K\P\big(\{Y(s),\; s\leq tK^2\log K\}\neq \T_K\big).}
By a standard estimate on random walks, given in  Appendix \ref{app:proof_RW} for completeness, there exists a positive constant $c$ such that applying Corollary \ref{cor:1} at time $t K^2 \log K$,
\eq{eq:explorationtime}{\PP(\{Y(s),\; s\leq tK^2\log K\}\neq \T_K) \le c K^{-t}.}
Injecting this bound into \eqref{eq:QTk} proves \eqref{eq:QKnoneploration} and concludes the proof of Theorem \ref{thm:transiencetimeSSEP}.

\subsection{Mixing time for the SWT}
\label{sec:proof_cutoff_mix}

In this Section, we show Theorem \ref{thm:cutoff_mix} and also derive specific bounds on the SWT mixing time for different numbers of excess particles, which proves cutoff in some regimes. To obtain Theorem \ref{thm:cutoff_mix}, we study the mixing time according to the number of excess particles. For $s > 0$, recall that $\pi_{K,s}$ is the invariant law of a SWT on $\T_K$ with $s$ excess particles. For $s=0$, we set $\pi_{K,s}$ the Dirac measure on the empty configuration. We can now define the $\varepsilon-$mixing time for the SWT configurations with $s$ excess particles:
\eq{eq:def_tmixs}{\tau_{K,s}^{\textsc{swt}}(\varepsilon) = \inf\Big\{t \ge 0 : \forall \xi \in \Gamma_{K},\; S(\xi)=s,\; d_{\textsc{tv}}(\bP^K_{\xi}(\xi(t) \in \cdot), \pi_{K,s}) \le \varepsilon\Big\}.}

Recall that a SSEP with traps with $s \ge 0$ behaves in two phases: first the transient phase, then once in the ergodic set of configurations it behaves like a SSEP with $s$ particles. So, denoting by 
\eqs{\tau^{\textsc{ssep}}_{K,s}(\varepsilon) = \inf\Big\{t \ge 0 : \forall \sigma \in \Sigma_{K},\; |\sigma|=s,\; d_{\textsc{tv}}(\bP_{\sigma}^K(\sigma(t) \in \cdot), \pi_{K,s}) \le \varepsilon\Big\}}
the $\varepsilon-$mixing time of the SSEP on $\T_K$ with $s$ particles, it follows immediately that

\eq{eq:t_mix_tr}{\theta_{K,s}^{\SWT}(\varepsilon) \vee \tau^{\textsc{ssep}}_{K,s}(\varepsilon) \le \tau^{\SWT}_{K,s}(\varepsilon) \le \theta^{\SWT}_{K,s}(\varepsilon/2) + \tau^{\textsc{ssep}}_{K,s}(\varepsilon/2).}

The lower bound is quite straightforward, and the upper bound comes with applying the Markov property at time $\theta^{\SWT}_{K,s}(\varepsilon/2)$. 

\paragraph{Estimates on the transience and mixing times.} We show in Lemma \ref{lem:t_tran_critique} the following estimates for the \emph{transience time}:

\eq{eq:encadr_swt}{\vartheta_{K,s}^{\SWT}(\varepsilon) \le \theta_{K,s}^{\SWT}(\varepsilon) \le \Theta_{K,s}^{\SWT}(\varepsilon),}
where the bounds are given by 
\eq{eq:encadr_swt2}{\vartheta_{K,s}^{\SWT} (\varepsilon) = \frac{1}{\pi^2}K^2\log\frac{K}{s \vee 1} + \mathcal{O}_{\varepsilon}(K^2),}
\eq{eq:encadr_swt3}{\Theta_{K,s}^{\SWT} (\varepsilon) = \frac{1}{\pi^2}K^2\log\frac{K}{s \vee 1} + \mathcal{O}_{\varepsilon} (K^2 \log\log K),}
where $\mathcal{O}_\varepsilon$ means the constant depends on $\varepsilon$. In the case where $s$ is of order $\mathcal{O}(K)$, we actually have $\theta_{K,s}^{\SWT} (\varepsilon)= \mathcal{O}_{\varepsilon}(K^2)$, see Remark \ref{rem:sctransience}.

In order to prove Theorem \ref{thm:cutoff_mix}, according to \eqref{eq:t_mix_tr}, we will use the following upper bound on the SSEP's mixing time, that we show in Appendix \ref{app:tmix}:
\begin{proposition}[Upper bound on the mixing time of the SSEP] For all $K \ge 1$ and $0\le s \le K$,
\eq{eq:ssep_pas_sharp}{\tau_{K,s}^{\textsc{ssep}}(\varepsilon) \le \frac{K^2}{2\pi^2} \left(\log \frac{s \wedge (K-s)}{\varepsilon} + \log 4 /\pi +  o(1) \right).}
\end{proposition}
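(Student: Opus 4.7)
The plan is to establish the bound in two stages: first for $s$ independent symmetric random walkers on $\T_K$ via a sharp spectral computation, and then transfer the estimate to the SSEP by exploiting negative dependence.

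\textbf{IRW bound.} The continuous-time symmetric random walk on $\T_K$ (in the natural rate-$1$ normalization) has eigenvalues $\lambda_j = 1 - \cos(2\pi j/K)$ for $j=0,\dots,K-1$, with Fourier eigenfunctions $\phi_j(k)=e^{2\pi i jk/K}$. A direct Fourier inversion gives $p_t(k) - 1/K = (2/K)\sum_{j\ge 1} e^{-\lambda_j t}\cos(2\pi jk/K)$, from which, as $t\to\infty$,
\[
\|p_t - \pi\|_{\textsc{tv}} \;=\; \frac{2}{\pi}\,e^{-\lambda_1 t}\,(1+o(1)),
\]
the constant $2/\pi$ arising from the Riemann sum $\tfrac{1}{K}\sum_k |\cos(2\pi k/K)|\to 2/\pi$. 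For $s$ independent copies, a standard inductive hybrid estimate $\|P^s_t-\pi^s\|_{\textsc{tv}} \le s\,\|p_t-\pi\|_{\textsc{tv}}$ combined with the single-walker bound yields $d_{\textsc{tv}}(P^s_t,\pi^s)\le (2s/\pi)\,e^{-\lambda_1 t}(1+o(1))$. Solving for $d_{\textsc{tv}}\le\varepsilon$ with $\lambda_1 \sim 2\pi^2/K^2$ produces the desired upper bound $\tfrac{K^2}{2\pi^2}\bigl(\log(s/\varepsilon)+\log(4/\pi)+o(1)\bigr)$ on $\tau^{\textsc{irw}}_{K,s}(\varepsilon)$.

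\textbf{Transfer to the SSEP.} The SSEP's occupation variables are negatively associated at every time $t$, a classical consequence of Pemantle's work on exclusion-type measures. Following the Salez--Tran strategy of \cite{salez_universality_2022, tran_cutoff_2022}, this negative dependence allows a pointwise comparison between the SSEP's joint law at time $t$ and the corresponding product IRW law, from which one deduces $\tau^{\textsc{ssep}}_{K,s}(\varepsilon)\le \tau^{\textsc{irw}}_{K,s}(\varepsilon)$. The case $s>K/2$ reduces to $s<K/2$ via the particle--hole involution $\eta\mapsto 1-\eta$, which swaps $s$ and $K-s$ while preserving the dynamics, producing the $s\wedge(K-s)$ in the stated bound.

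\textbf{Main obstacle.} The subtle step is the second one. The naive spectral argument based only on the Caputo--Liggett--Richthammer identity $\lambda_1^{\textsc{ssep}}=\lambda_1^{\textsc{irw}}$ gives $\tau^{\textsc{ssep}}_{K,s}(\varepsilon)=O(sK^2\log(K/s))$, which is far too loose. The sharp comparison hinges crucially on negative dependence, which (as highlighted in Appendix~\ref{subsec:noND}) survives for the trap-free SSEP but \emph{not} for the SWT, explaining why the comparison can be used here even though it could not be exploited in the proof of Theorem~\ref{thm:cutoff_SWT}. Tracking the precise constant $\log(4/\pi)$ through the hybrid argument then requires care to absorb the additive slack from the union bound step while preserving the leading order $1/(2\pi^2)$.
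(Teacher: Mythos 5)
Your first stage (the IRW computation) is fine in itself, but the second stage — the transfer from independent walkers to the SSEP — is a genuine gap, and it is precisely the step your whole argument rests on. Negative dependence (negative association) of the SSEP's occupation variables gives correlation inequalities for increasing events and product-type moment bounds (e.g.\ Liggett's comparison of $\E[\prod_{x\in A}\eta_t(x)]$ with the IRW analogue); it does \emph{not} give a ``pointwise comparison of the joint law with the product IRW law'' from which one could read off $\tau^{\textsc{ssep}}_{K,s}(\varepsilon)\le \tau^{\textsc{irw}}_{K,s}(\varepsilon)$. Total variation distance is not monotone under such correlation inequalities, and the two stationary laws do not even match (product of uniforms for labelled independent walkers, which allows collisions, versus the uniform measure on $\Sigma_{K,s}$). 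In fact, the comparability of SSEP and IRW mixing times is exactly Oliveira's conjecture, which this paper explicitly treats as open; and in \cite{salez_universality_2022,tran_cutoff_2022} negative dependence is used only to bound variances of linear statistics (second-moment/concentration arguments), never to dominate the SSEP's mixing time by that of independent walkers. So as written the key inequality is asserted, not proved. (A smaller point: your single-walker asymptotics would give the constant $\log(2/\pi)$, not $\log(4/\pi)$; harmless for an upper bound, but it shows the constant is not actually ``tracked through'' your argument.)

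The paper avoids this issue entirely by a direct coupling argument: it runs the SSEP from $\sigma$ jointly with a stationary copy $\zeta\sim\pi_{K,s}$, labels the $s$ particles in each (interchange-process construction), lets the $i$-th particles move with shared clocks once they occupy the same site and with independent clocks before, and bounds $d_{\textsc{tv}}$ at time $t$ by the probability that some pair $(X^i,Y^i)$ has not yet met. The meeting time of a single pair is the absorption time of a difference walk on $\llbracket 0,K\rrbracket$, estimated sharply by a spectral (sine-basis) computation — this is where $\frac{K^2}{2\pi^2}$ and $\log(4/\pi)$ come from — and a union bound over the $s$ pairs produces the $\log s$ in the leading order; the $s\wedge(K-s)$ follows from particle–hole symmetry. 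If you want to salvage your plan, you would need to replace the negative-dependence transfer by such a coupling (or an honest proof of the SSEP-vs-IRW comparison, which is not available at this level of generality).
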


\begin{proof}[Proof of Theorem \ref{thm:cutoff_mix}. ]
To obtain a uniform mixing time, over all values of $s$, we choose for all $K$ a number of particles $s^\star_K$ such that
\eq{eq:def_tmix_s_max}{\tau^{\SWT}_{K,s^\star_K}(\varepsilon) = \max \Big\{\tau_{K,s}^{\SWT}(\varepsilon), 0 \le s \le K\Big\}=\tau^{\SWT}_{K}(\varepsilon),}
where the former was defined in \eqref{eq:def_tmixs} and the latter is the overall SWT mixing time defined in \eqref{eq:def_tmix}.

Clearly, $\tau_{K}^{\SWT}(\varepsilon)\geq \tau_{K,0}^{\SWT}(\varepsilon)= \theta_{K}^{\SWT}(\varepsilon)\ge t_K^\star + \mathcal{O}_{\varepsilon}(K^2) $ according to \eqref{eq:encadr_swt2}. \\
For the upper bound, using \eqref{eq:ssep_pas_sharp}, we have $\tau_K^{\SWT}(\varepsilon) \le t_K^{\star} \left(1 - \frac{1}{2}\frac{\log s_K}{\log K}\right) + \mathcal{O}_{\varepsilon}(K^2 \log \log K)$, which is less than $t_K^{\star} + \mathcal{O}_{\varepsilon}(K^2 \log \log K) $. This yields \eqref{eq:tmix_fenetre} and the cutoff.
\end{proof}

\begin{table}
\bgroup
\def\arraystretch{1.7}
\begin{tabular}{|c|c|c|c|}
\hline
$s$ & $\theta^{\SWT}_{K,s}(\varepsilon)$ & $\tau^{\textsc{ssep}}_{K,s}(\varepsilon)$ & Cutoff? \\
\hline
0 & $\frac{1}{\pi^2}K^2 \log K  + \mathcal{O}_{\varepsilon}(K^2 \log\log K)$  & 0 & Yes \\
\hline
$\mathcal{O}(1)$ & $\frac{1}{\pi^2}K^2 \log K/s  + \mathcal{O}_{\varepsilon}(K^2 \log\log K)$ & $\mathcal{O}_{\varepsilon,s}(K^2)$  & Yes \\
\hline 
$s = K^{\alpha} < \frac{K}{2}$  & $\frac{1-\alpha}{\pi^2}K^2 \log K  + \mathcal{O}_{\varepsilon}(K^2 \log\log K)$ & $\frac{\alpha}{8\pi^2}K^2 \log K + \mathcal{O}_{\varepsilon}(K^2) $ & ? \\
\hline
$s = \delta K < \frac{K}{2}$  & $\mathcal{O}_{\varepsilon}(K^2)$ & $\frac{1}{8\pi^2}K^2 \log (\delta K)+ \mathcal{O}_{\varepsilon}(K^2) $ & Yes \\
\hline
$s = \delta K > \frac{K}{2}$  & $\mathcal{O}_{\varepsilon}(K^2) $ & $ \frac{1}{8\pi^2}K^2 \log ((1-\delta) K) + \mathcal{O}_{\varepsilon}(K^2) $ & Yes \\
\hline
$s = K - K^\alpha$  & $\mathcal{O}_{\varepsilon}(K^2) $ & $\frac{\alpha}{8\pi^2}K^2 \log (K) + \mathcal{O}_{\varepsilon}(K^2) $ & Yes \\
\hline
$s =  K - \mathcal{O}(1)$  & $\mathcal{O}_{\varepsilon}(K^2) $ & $ \mathcal{O}_{\varepsilon,s}(K^2) $ & ? \\
\hline
\end{tabular}
\egroup
\caption{Behaviour of the mixing time of the SWT on $\T_K$ with $s$ excess particles.}
\label{tab:tab_swt}
\end{table}

\paragraph{Mixing time for different values of $s$. }
We are now able to give precise estimates on the mixing time of the SWT with $s$ excess particles depending on the value of $s$. For this, we need precise estimates on the SSEP mixing time from \cite{lacoin_simple_2017}, given by the following Proposition.
\begin{proposition}[Bounds on the mixing time of the SSEP \cite{lacoin_simple_2017}] If $s_K\wedge (K-s_K) \to +\infty$, 
\eq{eq:encadr_ssep}{\frac{1}{8\pi^2}K^2\log (s_K \wedge (K-s_K)) + \mathcal{O}_{\varepsilon}(K^2) \le \tau_{K,s_K}^{\textsc{ssep}}(\varepsilon).}
\eq{eq:upper_bound_ssep}{\tau^{ \textsc{ssep}}_{K,s_K}(\varepsilon) \le \frac{1}{8\pi^2} K^2 \log (s_K\wedge (K-s_K)) + \mathcal{O}_{\varepsilon}(K^2),} 
\end{proposition}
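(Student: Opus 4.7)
The plan is to derive these bounds directly from the techniques developed by Lacoin in \cite{lacoin_simple_2017} for the SSEP cutoff on the circle, adapted to handle the full range of particle numbers $s_K$.

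First I would reduce the problem to the case $s_K \leq K/2$ using the SSEP's particle–hole symmetry: mapping $\sigma \mapsto \mathbf{1} - \sigma$ is an isomorphism between the SSEP with $s_K$ particles on $\T_K$ and the SSEP with $K - s_K$ particles, and preserves the uniform stationary distribution. Thus $\tau^{\textsc{ssep}}_{K,s_K}(\varepsilon) = \tau^{\textsc{ssep}}_{K,K-s_K}(\varepsilon)$, and we may assume $s_K \leq K/2$ so that $s_K \wedge (K-s_K) = s_K$.

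For the upper bound \eqref{eq:upper_bound_ssep}, I would use the height-function/coupling construction of \cite{lacoin_simple_2017}: identify the SSEP configuration with a discrete-gradient height function on $\T_K$, and construct a grand coupling (via the interchange process of Section \ref{subsub:echange}) between all initial configurations sharing the same number of particles. The analysis in \cite{lacoin_simple_2017} shows that the heights decorrelate at the rate dictated by the spectral gap of the SSEP generator, which behaves as $\sim \pi^2/K^2$, and that the expected $L^\infty$ fluctuation between coupled trajectories is controlled by $\sqrt{s_K}$ times a decaying exponential. Taking $t$ so that this exponential is small enough to beat a union bound over the $s_K$ (or $K - s_K$, whichever is smaller) particles produces the $\log(s_K \wedge (K-s_K))$ term with the correct prefactor $1/(8\pi^2)$.

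For the lower bound \eqref{eq:encadr_ssep}, I would use Wilson's eigenfunction method. The generator of the SSEP on $\T_K$ admits explicit discrete plane-wave eigenfunctions $\phi_\ell(\eta) = \sum_{x \in \T_K}(\eta_x - \rho) e^{2\pi i \ell x/K}$ with eigenvalues $\lambda_\ell = 1 - \cos(2\pi\ell/K)$. Applied to the bottom non-trivial mode $\ell = 1$, $\E_\eta[\phi_1(\eta(t))]$ decays as $e^{-\lambda_1 t}$, while the variance under the canonical measure on $\Sigma_{K,s_K}$ satisfies $\mathrm{Var}_{\pi_{K,s_K}}(\phi_1) = \Theta(s_K(K-s_K)/K)$. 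Choosing an initial configuration maximising $|\phi_1|$ and applying Chebyshev's inequality to the event $\{|\phi_1(\eta(t))| \geq$ typical value$\}$ yields the lower bound $t \geq \frac{1}{2\lambda_1}\log(s_K \wedge (K-s_K)) + \mathcal{O}_\varepsilon(K^2)$, giving the constant $1/(8\pi^2)$ after expanding $\lambda_1 = 2\pi^2/K^2 + O(K^{-4})$.

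The main obstacle is ensuring the estimates remain valid uniformly across the full regime $s_K \wedge (K-s_K) \to \infty$, including when $s_K$ is subpolynomial in $K$. Lacoin's original arguments primarily target densities bounded away from $0$ and $1$, and the care needed to control the variance of the plane-wave test function and the coupling fluctuations for small densities is where the proof requires most attention; in particular, the canonical measure's covariance acquires correction terms that must be tracked precisely to retrieve the sharp $1/(8\pi^2)$ constant in both bounds.
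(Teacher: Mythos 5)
The paper does not prove this proposition: it is imported directly from \cite{lacoin_simple_2017}, the lower bound \eqref{eq:encadr_ssep} being \cite[Equation (2.1)]{lacoin_simple_2017} (and valid for every $s$), and the upper bound \eqref{eq:upper_bound_ssep} being assembled from \cite[Propositions 3.2, 3.3 and Equation (3.13)]{lacoin_simple_2017}. Your plan is therefore to re-derive Lacoin's theorem rather than quote it. The ingredients you name --- particle--hole symmetry to reduce to $s_K\le K/2$, Wilson's eigenfunction method for the lower bound, a grand coupling of height functions for the upper bound --- are indeed the methods underlying the cited results, so the route is sound in spirit; but what it buys is a reproof of a known and genuinely difficult theorem that the paper deliberately black-boxes, and as written your sketch does not yet constitute such a proof.

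Two concrete issues. First, the spectral data are inconsistent with the paper's rate convention (each particle jumps at rate $1$ in each direction, equivalently each edge swaps at rate $1$): the relevant eigenvalues on $\T_K$ are $\lambda_\ell=2\bigl(1-\cos(2\pi\ell/K)\bigr)$, so $\lambda_1\simeq 4\pi^2/K^2$ and $\tfrac{1}{2\lambda_1}\simeq\tfrac{K^2}{8\pi^2}$; with your stated $\lambda_1\simeq 2\pi^2/K^2$ the same Wilson computation gives $\tfrac{K^2}{4\pi^2}\log s_K$, contradicting the constant you then assert. (The variance computation $\mathrm{Var}_{\pi_{K,s}}(\phi_1)=\Theta(s(K-s)/K)$ and the choice of a clustered initial condition are fine, and one must also control the time-$t$ variance of $\phi_1$, which is standard.) Second, and more seriously, the upper-bound paragraph asserts rather than proves the one estimate that produces the sharp constant: decay of the expected discrepancy at the spectral-gap rate plus a union bound over $s_K$ particles only yields a bound of order $\tfrac{1}{\lambda_1}\log s_K=\tfrac{K^2}{4\pi^2}\log s_K$ (and the cruder particle-by-particle meeting-time coupling, as in the paper's Appendix \ref{app:tmix}, gives $\tfrac{K^2}{2\pi^2}\log s_K$). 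The halving to $\tfrac{1}{2\lambda_1}\log s_K$ is exactly the content of \cite[Propositions 3.2 and 3.3]{lacoin_simple_2017}, which rests on a delicate second-moment/Fourier analysis of the coupled height functions; your ``$\sqrt{s_K}$ times a decaying exponential'' is the conclusion of that analysis, not an input one may take for granted. Finally, the uniformity issue you flag for small densities is not the real obstacle: Lacoin's statements already only require $s_K\wedge(K-s_K)\to\infty$, which is precisely the hypothesis of the proposition.
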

The lower bound comes from \cite[Equation (2.1)]{lacoin_simple_2017} and is in fact true for all $s$, and the upper bound is obtained from \cite[Propositions 3.2, 3.3 and Equation (3.13)]{lacoin_simple_2017}.
Hence, denoting by $\Theta_{K,s}^{\SSEP} (\varepsilon)$ the adequate upper-bound for $\tau_{K,s}^{\textsc{ssep}}(\varepsilon)$, given either by \eqref{eq:upper_bound_ssep} or \eqref{eq:ssep_pas_sharp} depending on the value of $s$, and $\vartheta_{K,s}^{\textsc{ssep}}(\varepsilon)$ the lower bound in \eqref{eq:encadr_ssep}, \eqref{eq:t_mix_tr} becomes
\eq{eq:t_mix_tr_2}{\vartheta_{K,s}^{\SWT}(\varepsilon) \vee 
\vartheta_{K,s}^{\SSEP}(\varepsilon) \le \tau^{\SWT}_{K,s}(\varepsilon) \le \Theta_{K,s}^{\SWT} (\varepsilon/2) + \Theta_{K,s}^{\SSEP} (\varepsilon/2),}
which directly yields the following bounds on the SWT mixing time according to the number of excess particles $s$.
\begin{proposition}[Bounds on the mixing time of the SWT in different regimes] \label{prop:t_mix_swt_all_s} For all sequence $(s_K)$ such that $0 \le s_K \le K$, setting $s_K^{\#} = s_K \wedge (K-s_K)$, then the mixing time of the SWT with $s_K$ excess particles has the following behaviour.
\begin{itemize}
    \item If $s_K^{\#} $ goes to infinity with $K$, 
    \eq{eq:tmix_swt_big_s}{\frac{K^2}{\pi^2}\left(\log \frac{K}{s_K} \vee \frac{\log s_K^{\#}}{8}  \right) + \mathcal{O}_{\varepsilon}(K^2) \le \tau^{\SWT}_K(\varepsilon) \le \frac{K^2}{\pi^2} \left( \log \frac{K}{s_K} + \frac{\log s_K^{\#}}{8} \right) +\mathcal{O}_{\varepsilon}(K^2\log\log K)}
    \item In all cases,
    \eq{eq:tmix_swt_small_s}{\frac{K^2}{\pi^2} \log \frac{K}{s_K} + \mathcal{O}_{\varepsilon}(K^2) \le \tau^{\SWT}_K(\varepsilon) \le \frac{K^2}{\pi^2}\left( \log \frac{K}{s_K} + \frac{1}{2} \log s_K^{\#}\right) +\mathcal{O}_{\varepsilon}(K^2\log\log K)}
\end{itemize}
\end{proposition}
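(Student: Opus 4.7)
The plan is to directly instantiate the sandwich \eqref{eq:t_mix_tr_2} with the quantitative transience and SSEP mixing time bounds already at hand. The proposition is essentially a consolidation of these estimates: no new probabilistic argument is needed, and the work consists of substituting and separating two regimes according to the behaviour of $s_K^{\#}$.

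First, I would substitute the transience bounds from Lemma \ref{lem:t_tran_critique}, namely \eqref{eq:encadr_swt2}--\eqref{eq:encadr_swt3}, giving $\vartheta_{K,s_K}^{\SWT}(\varepsilon) = \frac{K^2}{\pi^2}\log\frac{K}{s_K\vee 1} + \mathcal{O}_{\varepsilon}(K^2)$ and $\Theta_{K,s_K}^{\SWT}(\varepsilon) = \frac{K^2}{\pi^2}\log\frac{K}{s_K\vee 1} + \mathcal{O}_{\varepsilon}(K^2\log\log K)$. These provide the term $\frac{K^2}{\pi^2}\log(K/s_K)$ that appears on both sides of each bound in the proposition, and the dominant $K^2\log\log K$ error on the upper bounds is inherited from $\Theta_{K,s_K}^{\SWT}$.

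Second, I would split according to the behaviour of $s_K^{\#}$. When $s_K^{\#}\to\infty$, the sharp two-sided bounds \eqref{eq:encadr_ssep}--\eqref{eq:upper_bound_ssep} give both $\vartheta^{\SSEP}_{K,s_K}(\varepsilon)$ and $\Theta^{\SSEP}_{K,s_K}(\varepsilon)$ of the form $\frac{K^2}{8\pi^2}\log s_K^{\#} + \mathcal{O}_{\varepsilon}(K^2)$. Inserting these into \eqref{eq:t_mix_tr_2}, and using that the lower bound is a maximum of two terms while the upper bound is a sum, yields \eqref{eq:tmix_swt_big_s}. For the universal bound \eqref{eq:tmix_swt_small_s}, I would discard the $\vartheta^{\SSEP}_{K,s_K}$ contribution on the left-hand side (since the sharp SSEP lower bound requires $s_K^{\#}\to\infty$), keeping only $\vartheta^{\SWT}_{K,s_K}$, and then use the coarser but always valid upper bound \eqref{eq:ssep_pas_sharp}, which gives $\Theta^{\SSEP}_{K,s_K}(\varepsilon)\le \frac{K^2}{2\pi^2}\log s_K^{\#} + \mathcal{O}_{\varepsilon}(K^2)$. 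Summing the upper bounds then produces \eqref{eq:tmix_swt_small_s}.

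Since each step is a direct substitution, there is no genuine technical obstacle: the only care needed is bookkeeping the $\mathcal{O}_{\varepsilon}$ error terms and correctly selecting which SSEP estimate applies in each regime. I would also briefly note the degenerate case $s_K=0$, where the process freezes at the empty configuration and the SSEP contribution trivially vanishes, so that the mixing time coincides with the transience time, consistent with \eqref{eq:tmix_swt_small_s} under the convention $\log(K/(s\vee 1))=\log K$.
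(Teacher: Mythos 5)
Your proposal is correct and follows exactly the paper's route: the paper also obtains Proposition \ref{prop:t_mix_swt_all_s} by plugging the transience bounds \eqref{eq:encadr_swt2}--\eqref{eq:encadr_swt3} and the SSEP mixing bounds \eqref{eq:encadr_ssep}, \eqref{eq:upper_bound_ssep}, \eqref{eq:ssep_pas_sharp} into the sandwich \eqref{eq:t_mix_tr_2}, choosing the sharp SSEP estimates when $s_K^{\#}\to\infty$ and the coarse bound \eqref{eq:ssep_pas_sharp} otherwise. Your bookkeeping of the $\mathcal{O}_{\varepsilon}(K^2)$ versus $\mathcal{O}_{\varepsilon}(K^2\log\log K)$ errors and the treatment of the degenerate case $s_K=0$ are consistent with the paper.
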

These bounds allows to estimate $\tau_{K,s}^{\SWT}$ quite precisely, and obtain cutoff for some behaviours of $(s_K)$, see Table \ref{tab:tab_swt} for a summary. The SWT transience time exhibits cutoff for small $s$ and the SSEP mixing time exhibits cutoff for big $s$, and the transience time and the SSEP mixing time have roughly opposed monotonicities in $s$, and balance each other out. More precisely, when $s$ is small, the transience time dominates and the upper and lower bound in \eqref{eq:tmix_swt_small_s} are both of order $t_K^{\star}$; when $s$ is big enough, the SSEP mixing time dominates and the upper and lower bound in \eqref{eq:tmix_swt_big_s} are both of order $\tau^{\textsc{ssep}}_{K,s}$.

\section{Proof of Theorem \ref{thm:cutoff_SWT}}
\label{sec:cutoffSWT}

\subsection{Critical transience time with a unique trap}
\label{sec:uniquetrap}

Notice that traps act as empty equilibrium reservoirs on neighbouring particles (meaning those get removed at rate $1$), until they are filled up, at which point they no longer have any effect. For a critical configuration, the moment when all traps have filled up coincides with the moment when the configuration is completely empty, which makes estimates simpler. For this reason, we start by considering the simpler case, represented in Figure \ref{fig:unique}, where the initial configuration is critical and has a unique trap. We obtain in Lemma \ref{lem:unique_well} below a sharp estimate for the SSEP in contact at both extremities with empty equilibrium reservoirs, which directly translates as an estimate for the transience time of a critical single-trap SWT.

\begin{figure} 
\centering
\includegraphics[width=0.7\textwidth]{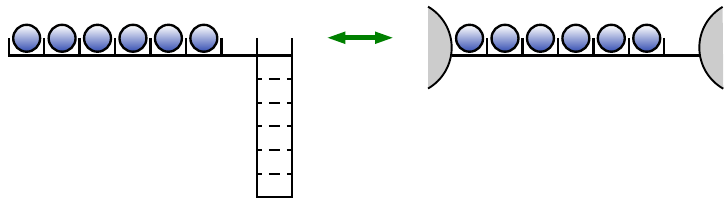}

\caption{On the left, a critical SSEP with traps with only one trap. On the right, a corresponding SSEP with empty reservoirs.}
\label{fig:unique}
\end{figure}

\medskip

Let $\xi\in \mathcal{T}^{\SWT}_{\star,K}$ be a critical SWT configuration, so that starting from $\xi$, $\xi(\cdot)$ stops being transient when it is identically $0$. By attractiveness, we can further assume without loss of generality that every site except the trap is initially occupied, since we want to estimate the largest transience probability at time $t$, and that removing particles will only lower the transience time. The position of the trap does not matter, therefore we consider a SWT $\xi(t)$ with initial configuration 
\eqs{\xi_k^\star={\un}_{\{1\leq k\leq K-1\}}-(K-1){\un}_{\{k=K\}}.}

Since there are exactly $K-1$ particles in the system $\xi(t)$'s particles behave as if the trap at site $K$ was infinite, which breaks the system's periodicity. In particular, defining $\noteblue{\Lambda_K:=\llbracket 1, K-1\rrbracket}$, as represented in Figure \ref{fig:unique}, the restricted process $\sigma(t):=\xi_{\mid \Lambda_K}(t)$ behaves like a regular SSEP on $\Lambda_K$ in contact at both extremities with empty reservoirs, meaning particles are deleted at rate $1$ at sites $1$ and $K-1$. We will strongly rely on the work of \cite{tran_cutoff_2022} about cutoff of the mixing time for the SSEP with reservoirs, and adapt it to the simpler case of empty reservoirs in the following. In what follows, we denote by $\noteblue{\Q^K_\sigma}$ the law of a SSEP  in contact at both extremities with empty reservoirs started from $\sigma\in \{0,1\}^{\Lambda_K},$ and by $\noteblue{\E^K_\sigma}$ the corresponding expectation. We choose this different notation for the law to emphasise that the dynamics takes place on the slightly smaller and \emph{non-periodic} set $\Lambda_K$. This process's invariant measure is the Dirac measure on the empty configuration on $\Lambda_K$ noted $\delta_{0,K}$, therefore for any initial configuration $\sigma$, and any time $t$, \eqs{d_{TV}(\Q_\sigma^K(\sigma(t) \in \cdot ) , \delta_{0,K}) = \Q_\sigma^K(|\sigma(t)|>0),} 
where the left-hand side is the total variation distance between the distribution of the SSEP at time $t$ and its invariant state, whereas the right-hand side is the probability that at time $t$, there is still at least one particle in $\sigma(t)$.
With this identity, we can therefore define for all $\varepsilon \in (0,1)$ the $\varepsilon$-\emph{mixing time}
\begin{align}
\theta^{\textsc{mix}}_K (\varepsilon) &= \inf\left\{t \ge 0 : \forall \sigma \in \{0,1\}^{\Lambda_K} , \Q_\sigma^K(|\sigma(t)| > 0) \le \varepsilon\right\}\\
&=\inf\left\{t \ge 0 : \Q_{\un}^K(|\sigma(t)| > 0) \le \varepsilon\right\}\\
&=\inf\left\{t \ge 0 : {\bf P}_{\xi^\star}^K(\xi(t)\in \mathcal{T}^{\SWT}_K) \le \varepsilon\right\},
\end{align}
where  $\un\in \{0,1\}^{\Lambda_K}$ is the full configuration $\noteblue{\un\equiv 1}$. The second identity holds by the SSEP's attractiveness, whereas the third holds because $\xi$ is transient iff $\sigma$ has at least one particle. Define
\begin{equation}
\label{eq:deftau}
\tau_K^\star = \inf \{t \ge 0 : \E_\un^K \left[|\sigma(t)| \right] \le 1\}.
\end{equation}
We first link $\tau_K^\star$ and the $\varepsilon$-mixing time.
\begin{lemma} 
\label{lem:unique_well}
There exists a constant $C>0$ such that for all $\varepsilon \in (0,1)$,
\begin{equation}
\label{eq:framing1}
 \tau_K^\star - C K^2\left(1 + \log \left(\frac{1}{1-\varepsilon}\right) \right) \le \theta^{\textsc{mix}}_K (\varepsilon) \le \tau_K^\star + C K^2 \log \frac{1}{\varepsilon}.
\end{equation}
Furthermore, recalling the definition \eqref{eq:tNstar} of $t_K^\star$, we have
\begin{equation} \label{eq:t_K}
|\tau_K^\star -t_K^\star|=\mathcal{O}(K^2).
\end{equation}
\end{lemma}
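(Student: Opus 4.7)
The plan is to reduce both bounds on $\theta^{\textsc{mix}}_K(\varepsilon)$ to a sharp control of the expected total occupancy $\E^K_{\un}[|\sigma(t)|]$, which under the empty-reservoirs dynamics is exactly solvable by sine-series expansion.

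First I would observe that the density $\rho(x,t):=\E^K_{\un}[\sigma_x(t)]$ solves the discrete heat equation on $\llbracket 0,K\rrbracket$ with Dirichlet boundary conditions $\rho(0,t)=\rho(K,t)=0$ and initial data $\rho(x,0)=\un_{x\in\Lambda_K}$. Diagonalising the Laplacian in the sine basis $\phi_j(x)=\sin(j\pi x/K)$, $1\le j\le K-1$, with eigenvalues $\lambda_j=4\sin^2(j\pi/(2K))$, yields
\eqs{\E^K_{\un}[|\sigma(t)|]=\sum_{j=1}^{K-1}c_je^{-\lambda_jt},\qquad c_j=\frac{2}{K}\Big(\sum_{x=1}^{K-1}\sin(j\pi x/K)\Big)^2,}
where $c_j=0$ for even $j$ and $c_1\sim 8K/\pi^2$. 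Since $\lambda_1=\pi^2/K^2+O(K^{-4})$, $\lambda_j-\lambda_1\ge c\,j^2/K^2$ for $j\ge 3$, and $c_j\le CK$, a routine tail bound gives
\eqs{\E^K_{\un}[|\sigma(t)|]=c_1 e^{-\lambda_1 t}\bigl(1+O(e^{-ct/K^2})\bigr)\quad\text{for }t\ge CK^2.}
Solving $\E^K_{\un}[|\sigma(\tau_K^\star)|]=1$ then yields $\tau_K^\star=\lambda_1^{-1}\log c_1+O(K^2)=K^2\log K/\pi^2+O(K^2)$, which is precisely \eqref{eq:t_K}.

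For the upper bound in \eqref{eq:framing1}, Markov's inequality at $t=\tau_K^\star+s$ gives $\Q^K_{\un}(|\sigma(t)|>0)\le\E^K_{\un}[|\sigma(t)|]\le e^{-\lambda_1 s}(1+o(1))$, so $s=\lambda_1^{-1}\log(1/\varepsilon)+O(K^2)$ suffices to push the right-hand side below $\varepsilon$, giving $\theta^{\textsc{mix}}_K(\varepsilon)\le\tau_K^\star+CK^2\log(1/\varepsilon)$. For the lower bound I would invoke the negative association of the SSEP with empty reservoirs, established e.g.\ in \cite{salez_universality_2022,tran_cutoff_2022}: the variables $(\sigma_x(t))_{x\in\Lambda_K}$ are negatively associated, hence
\eqs{\Q^K_{\un}(|\sigma(t)|=0)\le\prod_{x=1}^{K-1}(1-\rho(x,t))\le\exp\bigl(-\E^K_{\un}[|\sigma(t)|]\bigr),}
so $\Q^K_{\un}(|\sigma(t)|>0)>\varepsilon$ whenever $\E^K_{\un}[|\sigma(t)|]>\log(1/(1-\varepsilon))$. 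By the asymptotics above this holds as soon as $\tau_K^\star-t\ge\lambda_1^{-1}\log\log(1/(1-\varepsilon))+O(K^2)$, and since $\log\log(1/(1-\varepsilon))\le\log(1/(1-\varepsilon))$ for $\varepsilon$ bounded away from $0$ (the other regime being absorbed by the additive $O(K^2)$), this yields the left-hand side of \eqref{eq:framing1}.

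The main technical obstacle is the clean tail bound on $\sum_{j\ge 3}c_je^{-\lambda_jt}$ needed to pin $\tau_K^\star$ down to an $O(K^2)$ error; once that is in place, both directions of \eqref{eq:framing1} follow directly from Markov's inequality and from the negative-association structure respectively.
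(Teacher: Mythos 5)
Your proposal is correct, and it assembles the estimate from genuinely different ingredients than the paper. The paper keeps the spectral analysis to a minimum: it obtains \eqref{eq:t_K} by citing the computation of $\tau_K^\star$ in \cite[Section 5]{tran_cutoff_2022}, proves the upper bound of \eqref{eq:framing1} by Markov's inequality combined with the crude block-decay estimate $\E^K_\sigma[|\sigma(2t)|]\le e^{-c\lfloor t/K^2\rfloor}|\sigma|$ of Lemma \ref{lem:exp1} (a single random-walk exit-time bound plus a union bound), and proves the lower bound by the second-moment method, using only the variance bound $Var_{\un}^K(|\sigma(t)|)\le \E_{\un}^K[|\sigma(t)|]$ from negative dependence to get $d_{TV}\ge 1-1/\E_{\un}^K[|\sigma(t)|]$, then running the block-decay estimate backwards from $\tau_K^\star$. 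You instead derive everything from the exact sine-series representation of $\E_{\un}^K[|\sigma(t)|]$, which is legitimate (the one-point function solves the discrete heat equation with Dirichlet boundary data, and your eigenvalues and coefficients match the computation the paper itself carries out in Appendix \ref{app:tmix}); this yields \eqref{eq:t_K} directly rather than by citation, gives the upper bound with the sharp rate $\lambda_1$ in place of the unspecified constant $c$, and for the lower bound replaces Paley--Zygmund by the product bound $\Q_{\un}^K(|\sigma(t)|=0)\le \exp(-\E_{\un}^K[|\sigma(t)|])$, which follows from the same negative-dependence property (apply \eqref{eq:nd} inductively to the decreasing singleton events $\{\sigma_x(t)=0\}$, passing to complements) and in fact gives a slightly better $\varepsilon$-dependence ($\log\log\frac{1}{1-\varepsilon}$ instead of $\log\frac{1}{1-\varepsilon}$), more than enough for the statement. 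Two small points to tidy: you should note that the first-eigenvalue expansion is only applied at times $t\ge CK^2$ (when the claimed lower bound falls below this the inequality is vacuous or trivial, since in that regime $\log\frac{1}{1-\varepsilon}\gtrsim \log K$), and, exactly like the paper's own proof (which drops the floor in $e^{-c\lfloor m\rfloor}$), your upper bound is cavalier when $\varepsilon$ is within $O(K^{-c})$ of $1$, where the multiplicative $(1+o(1))$ correction is not absorbed by $CK^2\log\frac{1}{\varepsilon}$; this edge regime deserves a one-line remark but does not affect the substance.
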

The proof of Equation \eqref{eq:t_K} can be straightforwardly adapted from  \cite[Section 5]{tran_cutoff_2022} where a slightly different  definition of $\tau_K^\star$ is considered, therefore we do not reproduce the proof here. To prove \eqref{eq:framing1}, we use the following results.
\begin{lemma}(\cite[Proposition 6]{tran_cutoff_2022})
\label{prop:ND}
The SSEP with reservoirs preserves negative dependence, in particular, for any initial configuration $\sigma$,
for all $t \ge 0$, $Var_\sigma^K(|\sigma(t)|)\le \E_\sigma^K[|\sigma(t)|] $.
\end{lemma}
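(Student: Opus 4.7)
The plan has two components: first, establish that the law of $\sigma(t)$ under $\Q_\sigma^K$ is negatively associated (NA) for every $t \ge 0$, and second, extract the variance bound as a direct consequence.

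For the first step, I would rely on the graphical construction of the SSEP with empty reservoirs. Equip each interior edge $\{k,k+1\} \subset \Lambda_K$ with an independent rate-$1$ Poisson clock that, when it rings, swaps the occupations of its two endpoints, and equip each of the two boundary sites $1$ and $K-1$ with an independent rate-$1$ clock that resets the occupation at that site to $0$. Starting from the deterministic configuration $\sigma$, the process is a measurable function of these clocks; over any finite horizon $[0,t]$, $\sigma(t)$ is produced by finitely many elementary operations of only two types, swaps and resets. A Dirac mass is trivially NA, so by induction on the number of clock rings it suffices to verify that each elementary operation preserves NA.

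The reset operation is easy: replacing one coordinate by the deterministic value $0$ preserves NA, since any increasing test function of the reset coordinate becomes a constant and this independence does not spoil the defining covariance inequalities. The swap is the genuinely nontrivial step, because each Poisson ring yields a stochastic mixture of the identity and the transposition of the endpoints, and such mixtures do not in general preserve NA. Here the cleanest route is to invoke the preservation of the strong Rayleigh property by the symmetric exclusion semigroup, which in particular forces NA to persist under SSEP swaps; combined with preservation under resets, this shows that the joint law of $(\sigma_k(t))_{k \in \Lambda_K}$ is NA for every $t$. This is precisely the content of \cite[Proposition 6]{tran_cutoff_2022}, whose argument accommodates the more general Bernoulli boundary dynamics.

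Once NA is established, the variance estimate follows by a routine covariance decomposition. Writing $|\sigma(t)| = \sum_{k \in \Lambda_K} \sigma_k(t)$, one has
\[
\mathrm{Var}_\sigma^K(|\sigma(t)|) = \sum_{k \in \Lambda_K} \mathrm{Var}_\sigma^K(\sigma_k(t)) + \sum_{k \ne k'} \mathrm{Cov}_\sigma^K(\sigma_k(t), \sigma_{k'}(t)).
\]
Each off-diagonal covariance is nonpositive by NA applied to the increasing single-site functions $\sigma \mapsto \sigma_k$ and $\sigma \mapsto \sigma_{k'}$ on disjoint singletons, so the second sum is $\le 0$. Each diagonal term satisfies $\mathrm{Var}_\sigma^K(\sigma_k(t)) = \E_\sigma^K[\sigma_k(t)](1-\E_\sigma^K[\sigma_k(t)]) \le \E_\sigma^K[\sigma_k(t)]$ because $\sigma_k(t) \in \{0,1\}$, and summing in $k$ gives $\E_\sigma^K[|\sigma(t)|]$. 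The main obstacle is unambiguously the NA-preservation step for SSEP swaps, which rests on the algebraic combinatorics of the strong Rayleigh property; rather than reproducing it, I would quote \cite{tran_cutoff_2022} directly.
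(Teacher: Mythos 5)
Your proposal is correct, but note that the paper does not prove this lemma at all: it is imported verbatim from \cite[Proposition 6]{tran_cutoff_2022}, and the authors explicitly write that they ``will not prove these two results, and refer to the original article for the proofs.'' So there is no in-paper argument to compare against; what you have written is essentially the standard proof underlying the cited result. Your reduction of the variance bound to negative association is exactly right: negative association gives $\mathrm{Cov}(\sigma_k(t),\sigma_{k'}(t))\le 0$ for $k\neq k'$, and the Bernoulli bound $\mathrm{Var}(\sigma_k(t))\le \E[\sigma_k(t)]$ handles the diagonal, so $\mathrm{Var}_\sigma^K(|\sigma(t)|)\le \E_\sigma^K[|\sigma(t)|]$ follows by summation. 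You also correctly identify the only genuinely delicate point: a naive ``induction on clock rings'' does not close on its own, because the time-$t$ law is a \emph{mixture} over which clocks rang, and mixtures of negatively associated measures need not be negatively associated; this is precisely why one must route the argument through the strong Rayleigh property (preserved by symmetric exclusion swaps and by the boundary annihilation), which implies negative association at every time. Since you ultimately defer that step to \cite{tran_cutoff_2022}, your treatment is consistent with, and slightly more informative than, the paper's.
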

We will also use a classical estimate on the hitting time of random walks, taken from the same article.
\begin{lemma}(\cite[Lemmas 8 and 12]{tran_cutoff_2022})
\label{lem:exp1}
Let $(Y(t))_{t \ge 0}$ a continuous time simple symmetric random walk on $\{0,...,K\}$. Let $T_0 = \inf \{t \ge 0 : Y(t) = 0\}$.
Then there exists a constant $c>0$ independent of K such that for all $0 \le i \le K$,
\begin{equation}
\PP_i(T_0 \ge 2 K^2) < e^{-c}. 
\end{equation} 
In particular, for any initial condition $\sigma$ on $\Lambda_K$,
\begin{equation}
\label{lem:exp2}
\E_{\sigma}^K[|\sigma(2 t)|] \le e^{-c \ent{\frac{t}{K^2} }} |\sigma|.
\end{equation}
\end{lemma}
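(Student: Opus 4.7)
The cleanest route is to control the mean hitting time $\E_i[T_0]$ and then apply Markov's inequality. I would solve the standard discrete Poisson equation $\Delta f \equiv -1$ on $\{0,\ldots,K\}$ with $f(0)=0$ and the appropriate boundary condition at $K$; this gives an explicit quadratic formula for $f(i) = \E_i[T_0]$, from which one reads $\E_i[T_0] \le C_0 K^2$ uniformly in $i$ and $K$, with $C_0 \le 1$. Markov's inequality then yields $\PP_i(T_0 \ge 2K^2) \le 1/2$, which is the announced bound for some explicit $c>0$ independent of $K$.

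\textbf{Part 2 — the exponential decay of $\E_\sigma^K[|\sigma(2t)|]$.} The key input here is a duality identity for the SSEP with empty reservoirs, asserting that its one-point correlations evolve as a single random walker absorbed at the reservoir vertices. Concretely, one should prove
\begin{equation*}
\E_\sigma^K[\sigma_x(t)] \;=\; \sum_{y \in \Lambda_K} p_t(y,x)\, \sigma_y,
\end{equation*}
where $p_t$ is the transition kernel of a symmetric random walk on $\{0,\ldots,K\}$ with absorption at the two reservoir vertices. This is most cleanly justified via the graphical/stirring construction: bond-wise swaps preserve $\sigma_x$ as a linear functional of the indicators, and annihilations at a boundary site amount to absorption for the associated walker. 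Summing over $x$ yields
\begin{equation*}
\E_\sigma^K[|\sigma(t)|] \;=\; \sum_{y} \sigma_y \,\PP_y(\tau > t),
\end{equation*}
with $\tau$ the absorption time of the dual walker.

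\textbf{Combining the two parts.} Since the two-sided absorption time $\tau$ is pointwise dominated by the one-sided hitting time $T_0$, Part 1 gives $\PP_y(\tau > 2K^2) \le e^{-c}$ uniformly in $y$. Partitioning $[0,2t]$ into $\lfloor t/K^2 \rfloor$ disjoint windows of length $2K^2$ and iterating via the strong Markov property produces $\PP_y(\tau > 2t) \le e^{-c \lfloor t/K^2 \rfloor}$, and summing against $\sigma_y$ gives the second displayed inequality of the lemma. \textbf{The main technical subtlety} lies in establishing the SSEP duality cleanly in the presence of empty reservoirs (particularly, checking that the graphical construction handles boundary annihilations as absorption for the dual walker); once this identity is granted, the remainder is elementary random-walk bookkeeping.
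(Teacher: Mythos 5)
Your proof is correct, but note that the paper does not actually prove this lemma: it imports both claims verbatim from \cite[Lemmas 8 and 12]{tran_cutoff_2022}, adding only the one-line indication that the second claim follows from a union bound over all particles initially in the system together with the Markov property applied to the sub-trajectories on the windows $[2jK^2,2(j+1)K^2]$. Your argument is therefore a self-contained substitute rather than a reproduction, and both halves hold up. For the first claim, solving $\Delta f\equiv -1$ with $f(0)=0$ and a Neumann condition at $K$ gives $\E_i[T_0]=i(2K-i)\le K^2$, so Markov's inequality yields the bound with any $c<\log 2$ (take it strictly below $\log 2$ to get the strict inequality in the statement); this is at least as elementary as the martingale argument of the cited Lemma 8. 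For the second claim, you pass through the one-point duality $\E_\sigma^K[\sigma_x(t)]=\sum_y p_t(y,x)\,\sigma_y$ with Dirichlet absorption at the reservoir vertices, whereas the route the paper points to uses the labelled/interchange construction (as in Section \ref{subsub:echange}), under which each initial particle's marginal trajectory is already a rate-$1$ random walk killed at the reservoirs, so that $\E_\sigma^K[|\sigma(2t)|]\le |\sigma|\sup_y\PP_y(\tau>2t)$ follows from a plain union bound with no duality computation. The two reductions are equivalent in substance — both reduce the question to the survival probability of a single absorbed walker and then iterate over $\ent{t/K^2}$ windows of length $2K^2$ via the strong Markov property — and your observation that the two-sided absorption time is dominated by $T_0$ (couple the absorbed walk with the reflected one up to the first visit to $K$) correctly bridges the two parts. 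The duality identity with boundary annihilation is the only extra ingredient you must verify, and it is standard, so the proposal is sound.
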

We will not prove these two results, and refer to the original article for the proofs. The first claim in Lemma \ref{lem:exp1} is \cite[Lemma 8]{tran_cutoff_2022}, whereas the second is \cite[Lemma 12]{tran_cutoff_2022}, and is a consequence of a  union bound over all particles initially in the system, and Markov's property applied to the sub-trajectories $\{Y(t), t\in [2jK^2,2(j+1)K^2]\}$, $j\geq 0$ of each particle.

\begin{proof}[Proof of Lemma \ref{lem:unique_well}]We now use the two previous Lemmas to prove our estimate on the mixing time of the SSEP with empty reservoirs, following  the same steps as  \cite[Theorem 2]{tran_cutoff_2022}. 
\paragraph{Upper bound of \eqref{eq:framing1}: }
By Markov inequality, we start by writing 
\eqs{\Q_{\un}^K(|\sigma(t)| \ge 1) \le \E_{\un}^K [|\sigma(t)|] .}
Now we choose $t = \tau_K^\star + 2 K^2 m$ with $m=\frac{1}{c}\log \left(\frac{1}{\varepsilon}\right)$, $c$ being the constant introduced in Lemma \ref{lem:exp1}. We write by Markov property, \eqref{lem:exp2}, and the definition of $\tau_K^\star$,
\begin{align}
\E_{\un}^K[|\sigma(\tau_K^\star + 2K^2 m)|] &= \E_{\un}^K \left[\E^K_{\sigma(\tau_K^\star)} [|\sigma(2K^2 m)|]\right] \label{eq:markov}\\
&\le e^{-cm} \E_{\un}^K[|\sigma(\tau_K^\star)|] \nonumber\\
&\le \varepsilon. \nonumber
\end{align}
In particular, for any $t \ge \tau_K^\star +  \frac{2}{c} K^2 \log\left(\frac{1}{\varepsilon}\right)$, $d_{TV}(\Q_{\un}^K(\sigma(t) \in \cdot), \delta_{0,K}) \le \varepsilon$, therefore 
\begin{equation}
\theta^\textsc{mix}_K(\varepsilon) \le \tau_K^\star +  \frac{2}{c} K^2 \log\frac{1}{\varepsilon}.
\end{equation}

\paragraph{Lower bound of \eqref{eq:framing1}: }
Recall that by Lemma \ref{prop:ND}, $Var_{\un}^K(|\sigma(t)|) \le \E_{\un}^K[|\sigma(t)|]$ at all times.
We also have, by the second moment method, and Jensen's inequality
\begin{equation}
\Q_{\un}^K(|\sigma(t)| > 0)\ge \frac{\E_{\un}^K[|\sigma(t)|]^2}{\E_{\un}^K[|\sigma(t)|^2]} \ge 1 - \frac{Var_{\un}^K(|\sigma(t)|)}{\E_{\un}^K[|\sigma(t)|]^2}. 
\end{equation}
so that
\begin{equation}
\label{eq:dTV}
d_{TV}(\Q_{\un}^K(\sigma(t) \in \cdot), \delta_{0,K}) \ge 1 - \frac{1}{\E_{\un}^K[|\sigma(t)|]}.
\end{equation}
By the same reasoning as in equation \eqref{eq:markov}, for $m \in \N$, $\E_{\un}^K[|\sigma(\tau_K^\star)|] \le e^{-cm} \E_{\un}^K [|\sigma(\tau_K^\star - 2K^2 m)|]$, which gives the lower bound 
\begin{equation}
\E_{\un}^K[|\sigma(\tau_K^\star - 2K^2m)|] \ge e^{cm} \E_{\un}^K[|\sigma(\tau_K^\star)|] = e^{cm}.
\end{equation}

By choosing $m:= \frac{1}{c} \log\left(\frac{1}{1-\varepsilon}\right) +1> \frac{1}{c} \log\left(\frac{1}{1-\varepsilon}\right)$, we obtain using \eqref{eq:dTV} 
\begin{equation}
d_{TV}(\Q_{\un}^K(\sigma(\tau_K^\star - 2K^2m) \in \cdot), \delta_{0,K}) > \varepsilon,
\end{equation}
so that, $\theta_K^{\textsc{mix}}(\varepsilon) \ge \tau_K^\star - \frac{2}{c} K^2\log \left(\frac{1}{1-\varepsilon}\right) - 2 K^2$ which gives us the lower bound. Setting $C = \frac{2}{c} \vee 2$ concludes the proof of \eqref{eq:framing1}. 

\end{proof}

\subsection{Generalisation to all  initial configurations}
\label{subsub:all_critical}

Now that we have solved the case of a critical configuration with a single trap, we wish to consider a general configuration and get back to the single trap case. Because of attractiveness, solving the critical case immediately yields estimates  for the supercritical and subcritical  transience time. More precisely, given a SWT configuration $\xi\in \Gamma_K$, and recalling the definition of the number of excess particles $S(\xi)$ (see Section \ref{sec:defxi}), define 
\eqs{\mathcal{T}^{\SWT}_{\star,K} = \big\{\xi \in \mathcal{T}_K^{\SWT} : S(\xi) = 0\big\}}
the set of transient critical configurations of the SWT on $\T_K$. 
Recall from \eqref{eq:critprobSWT} the definition of the maximal critical transience probability  $p_{\star,K}^{\SWT}(t)$, and define the associated $\varepsilon$-transience time 
\eqs{
\theta^{\SWT}_{\star,K}(\varepsilon) = \inf \{t \ge 0 : p_{\star,K}^{\SWT}(t) \le \varepsilon\}.
}
We claim the following;
\begin{lemma}
\label{lem:t_tran_K}
The maximal transience time for the SWT is the critical one, meaning
\begin{equation}
\label{eq:critSWT}
\theta^{\SWT}_K(\varepsilon) = \theta^{\SWT}_{\star,K}(\varepsilon).
\end{equation}
\end{lemma}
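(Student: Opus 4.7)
The plan is to show both directions of the equality. The inequality $\theta_{\star,K}^{\SWT}(\varepsilon) \leq \theta_K^{\SWT}(\varepsilon)$ is immediate, since $p_{\star,K}^{\SWT}$ is the supremum of $p_\xi(t)$ taken over a subset of $\Gamma_K$. For the reverse, it is enough to prove that for every $\xi \in \Gamma_K$ there exists a critical configuration $\xi^\star \in \mathcal{T}_{\star,K}^{\SWT}$ such that $p_\xi(t) \leq p_{\xi^\star}(t)$ for all $t \geq 0$. If $\xi$ is not transient there is nothing to prove, so assume $\xi \in \mathcal{T}_K^{\SWT}$ and distinguish cases according to the sign of $S(\xi)$.

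In the supercritical case $S(\xi) = s > 0$, pick any $s$ sites $k$ with $\xi_k = 1$ and set them to $0$ to obtain a critical configuration $\xi^\star$ with $\xi^\star \leq \xi$ componentwise. By the basic coupling of Section \ref{subsec:att_swt}, we can run both processes simultaneously with $\xi^\star(t) \leq \xi(t)$ almost surely. The componentwise inequality forces $|\xi^{\star,-}(t)| \geq |\xi^-(t)|$ and, since $\xi^{\star}_k(t) = 1$ implies $\xi_k(t) = 1$, also $|\xi^{\star,+}(t)| \leq |\xi^+(t)|$. Since $S$ is conserved and $S(\xi) = s > 0$, the equality $|\xi^+(t)| = |\xi^-(t)| + s$ holds, so $\xi(t) \in \mathcal{T}^{\SWT}_K$ reduces to the single condition $|\xi^-(t)| > 0$. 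When this happens, $|\xi^{\star,-}(t)| > 0$, and criticality of $\xi^\star$ gives $|\xi^{\star,+}(t)| = |\xi^{\star,-}(t)| > 0$ as well; hence $\xi^\star(t)$ is transient too. This yields $p_\xi(t) \leq p_{\xi^\star}(t) \leq p_{\star,K}^{\SWT}(t)$.

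The subcritical case $S(\xi) = -s < 0$ is treated symmetrically: reduce a total of $s$ units of trap depth (i.e.\ increase the values $\xi_k$ for some $k$ with $\xi_k < 0$) to obtain a critical $\xi^\star \geq \xi$. The basic coupling gives $\xi(t) \leq \xi^\star(t)$ and hence $|\xi^+(t)| \leq |\xi^{\star,+}(t)|$. In the subcritical regime, transience of $\xi(t)$ reduces to $|\xi^+(t)| > 0$, which propagates to $\xi^\star(t)$ by the same conservation-of-$S$ argument, giving once again $p_\xi(t) \leq p_{\xi^\star}(t) \leq p_{\star,K}^{\SWT}(t)$.

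Taking the supremum over $\xi \in \Gamma_K$ establishes $p_K^{\SWT}(t) \leq p_{\star,K}^{\SWT}(t)$, and combined with the trivial reverse inequality we obtain $p_K^{\SWT}(t) = p_{\star,K}^{\SWT}(t)$ for every $t \geq 0$, whence \eqref{eq:critSWT}. The only subtlety worth flagging is that transience is \emph{not} a monotone event under the coupling in general; it is crucial to use the sub/super-critical dichotomy to reduce transience to a single one-sided condition ($|\xi^-(t)| > 0$ or $|\xi^+(t)| > 0$), and then to exploit the conservation of $S$ on the critical side to upgrade that condition back to full transience of $\xi^\star$.
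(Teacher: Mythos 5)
Your proof is correct and follows essentially the same route as the paper: attractiveness and the basic coupling of Section \ref{subsec:att_swt} against a critical configuration $\xi^\star$ obtained by removing particles (supercritical case) or trap depth (subcritical case), with criticality guaranteeing that transience of $\xi(t)$ forces transience of $\xi^\star(t)$. Your explicit reduction of transience to the one-sided condition $|\xi^-(t)|>0$ (resp.\ $|\xi^+(t)|>0$) via conservation of $S$ is precisely the step the paper's shorter argument leaves implicit, so no gap.
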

\begin{proof}
The proof is straightforward thanks to the attractiveness of the SSEP with traps. Given a supercritical configuration $\xi\in \Sigma_K$, we associate it with a critical configuration $ \xi^\star\leq \xi$ e.g. by removing particles in $\xi$ and keeping the same traps. Then, under the basic coupling introduced in Section  \ref{subsec:att_swt}, at any time $t$, we have $\xi^\star(t)\leq \xi(t)$, so that if $\xi$ is transient at time $t$, then $\xi^\star$ is as well. In particular, $p_{\xi^\star}(t)\geq p_\xi(t)$. The exact same argument works for a subcritical configuration $\xi$, which we couple with a critical one $\xi^\star$, for example by adding particles. In the same way, if $\xi$ is still transient at time $t$, then so is $\xi^\star$. Putting these two cases together yields as wanted 
\eqs{p_{K}^{\SWT}(t)=p_{\star,K}^{\SWT}(t),}
which proves the Lemma.
\end{proof}

Because of this Lemma, we now only need to estimate the  critical  transience time. However, we present here a more detailed estimate for supercritical configurations depending on the value of $S(\xi)$, which will be of use later on.
For this purpose, define for $s\in \llbracket 0,K\rrbracket$
\eqs{p^{\SWT}_{K,s} = \sup\big\{p_\xi(t): S(\xi) = s\big\} \qquad \mbox{ and }\qquad \theta^{\SWT}_{K,s}(\varepsilon) = \inf \{t \ge 0 : p_{K,s}^{\SWT}(t) \le \varepsilon\},}
which are the transience probability and transience time for the SWT with $s$ excess particles. 

\begin{lemma} 
\label{lem:t_tran_critique}
There exist $C>0$  such that for all $K \in \Z$, for all $0<\varepsilon<1$,
\begin{equation} \label{eq:encadrement_theta}
\vartheta_{K,s}(\varepsilon) \le \theta^{\SWT}_{K,s}(\varepsilon) \le\Theta_{K,s}(\varepsilon)
\end{equation}
where $t_K^\star$ was defined in \eqref{eq:tNstar}, and 
\begin{subequations}
\label{def:thetaK12}
\begin{align}
\vartheta_{K,s}(\varepsilon)&:=\frac{K^2}{\pi^2}\log\frac{K}{s \vee 1} - C K^2 \left(1 + \log\frac{1}{1-\varepsilon}\right) \label{def:thetaK1}\\ \Theta_{K,s}(\varepsilon)&:= \frac{K^2}{\pi^2}\log\frac{K}{s \vee 1} + C K^2 \left(1 + \log\frac{3 \log K}{\varepsilon}  \right). \label{def:thetaK2}
\end{align}
\end{subequations}
Note that for $s=0$ (critical case), the leading term in both $\vartheta_{K,0}$ and $\Theta_{K,0}$ is exactly $t_K^\star$ defined in \eqref{eq:tNstar}.
\end{lemma}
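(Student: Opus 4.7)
The plan is to prove the two bounds in \eqref{eq:encadrement_theta} separately, using Lemma \ref{lem:unique_well} as the main quantitative input. For the \textbf{lower bound} $\vartheta_{K,s}(\varepsilon)\le\theta^{\SWT}_{K,s}(\varepsilon)$, it suffices to exhibit a specific initial configuration $\xi^\star\in\Gamma_K$ with $S(\xi^\star)=s$ that is transient with probability $\ge\varepsilon$ at time $\vartheta_{K,s}(\varepsilon)$. I would take $\xi^\star$ with $K-1$ particles at sites $1,\dots,K-1$ and a single trap of depth $K-1-s$ at site $K$, exactly as in Section \ref{sec:uniquetrap}. The restricted process $\sigma(t):=\xi(t)_{|\Lambda_K}$ is a SSEP on $\Lambda_K$ with empty reservoirs at both ends, and $\xi(t)$ remains transient iff $|\sigma(t)|>s$. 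Combining the second-moment inequality with the negative dependence of the SSEP with reservoirs (Lemma \ref{prop:ND}) gives, analogously to the proof of \eqref{eq:framing1},
\eqs{{\bf P}^K_{\xi^\star}\bigl(|\sigma(t)|>s\bigr) \ \ge \ 1-\frac{\E_\un^K[|\sigma(t)|]}{\bigl(\E_\un^K[|\sigma(t)|]-s\bigr)^2},}
which is close to $1$ as soon as $\E_\un^K[|\sigma(t)|]$ is a sufficiently large multiple of $s\vee 1$. The spectral expansion of the Dirichlet Laplacian on $\Lambda_K$ (leading eigenvalue $\pi^2/K^2+o(1/K^2)$) yields $\E_\un^K[|\sigma(t)|]\sim(4K/\pi)\,e^{-\pi^2 t/K^2}$ for $t\gg K^2$, and tuning $t$ so that this expectation equals a $\varepsilon$-dependent multiple of $s\vee 1$ produces \eqref{def:thetaK1}.

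For the \textbf{upper bound} $\theta^{\SWT}_{K,s}(\varepsilon)\le\Theta_{K,s}(\varepsilon)$ one must control $p_\xi(t)$ \emph{uniformly} over all $\xi$ with $S(\xi)=s$. My plan is to work with the labelled SWT of Section \ref{subsub:echange} coupled to the labelled SSEP $\hat\sigma$ via \eqref{eq:idXiY}. A supercritical $\xi$ is ergodic at time $t$ iff, at every initial trap site $k_0$ of depth $d_{k_0}\ge 1$, at least $d_{k_0}$ labelled particles have visited $k_0$ by time $t$. For each fixed $k_0$, I would compare to the single-trap setup of Lemma \ref{lem:unique_well} applied on the arc $\T_K\setminus\{k_0\}$ viewed as a segment with empty reservoirs at both sides, but with exit criterion replaced by ``at most $|\xi^+|-d_{k_0}$ surviving particles''. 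Attractiveness ensures that filling the trap at $k_0$ in the full SWT takes no longer than filling this reduced-exit process, so the exponential tail in \eqref{eq:framing1} gives a per-trap non-filling probability decaying as $\exp(-c(t-\tau_K^\star)/K^2)$. A union bound over the at most $K$ trap sites forces each tail to be of order $\varepsilon/K$, producing an extra $\log K$ factor inside the exponential, which translates after conversion to time into the $\log\log K$ factor appearing in \eqref{def:thetaK2}.

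The \textbf{main obstacle} is implementing the per-trap isolation rigorously: the labelled SSEP particles are not independent walkers but form an interchange process, and competition between multiple traps may skew the individual filling dynamics in ways that the naive comparison above does not capture. As observed in Remark \ref{rem:worstcutoff}, attractiveness does not reduce to a single worst-case configuration, which is why the upper bound carries an $\mathcal{O}(K^2\log\log K)$ correction rather than the conjecturally sharp $\mathcal{O}(K^2)$ window. The plan is therefore to tolerate this loss through the union bound, accepting that pinning down the exact worst-case behaviour—and thereby removing the $\log\log K$ factor—would require a finer argument beyond what attractiveness and the single-trap estimate can provide.
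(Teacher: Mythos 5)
Your lower bound is fine and is essentially the paper's argument: the single-trap configuration, the identification of the restricted dynamics with a SSEP with empty reservoirs, Chebyshev plus negative dependence (Lemma \ref{prop:ND}), and the spectral estimate of $\E_{\un}^K[|\sigma(t)|]$ reproduce the proof of the lower bound of Lemma \ref{lem:unique_well} with the reference time $\tau^\star_{K,s}$ of \eqref{eq:tauKS}. The upper bound, however, has a genuine gap, and it is directional. If $\xi'\geq\xi$ is obtained from $\xi$ by deleting every trap except the one at $k_0$, the basic coupling of Section \ref{subsec:att_swt} gives $\xi_{k_0}(t)\leq\xi'_{k_0}(t)$: the trap at $k_0$ fills \emph{at least as fast} in the single-trap (or reduced-exit) process, because the other traps only steal particles that could otherwise have filled $k_0$. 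Hence attractiveness yields $\bP^K_\xi(\xi_{k_0}(t)<0)\geq\bP^K_{\xi'}(\xi'_{k_0}(t)<0)$, a \emph{lower} bound on the per-trap persistence probability, whereas your union bound needs an upper bound. This is precisely why the paper never compares trap-filling rates: for a sector $A$ it works on the event $E_A(t)$ that a trap persists in $A$, deduces that strictly more than $S(\xi)$ labelled particles are still alive and that no live particle has crossed $A$, and uses this absence of crossing to ``unroll'' $\T_K$ onto the segment $\llbracket 1,K+|A|\rrbracket$ and dominate the live particles by three SSEPs in contact with empty reservoirs (Lemmas \ref{lem:survive_central}, \ref{lem:right_k}, \ref{lem:left_k} and Proposition \ref{prop:coupling_central}), yielding \eqref{eq:survival3}; the comparison is on particle survival, where the inequality goes the right way.

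There is also a quantitative problem with your union bound. With per-trap tails of order $\exp(-c(t-\tau^\star_{K,s})/K^2)$, requiring each of up to $K$ trap sites to contribute at most $\varepsilon/K$ forces $t\geq\tau^\star_{K,s}+\tfrac{K^2}{c}\log\tfrac{K}{\varepsilon}$, i.e.\ an error term of order $K^2\log K$ — the same order as the leading term for $s=O(1)$ — and not the $K^2\log\log K$ you claim; your conversion of the extra $\log K$ into a $\log\log K$ is incorrect. The paper's remedy is the decomposition \eqref{eq:splitAi} into only $Q+1$ sectors with $Q=\lceil\log K\rceil$, each of length about $K/\log K$: the sector length inflates the unrolled segment to $K+a$ with $a=O(K/\log K)$, which after multiplication by $\log K$ costs only $O(K^2)$ in time, while the union bound is over $\log K$ events applied at level $\varepsilon/(Q+1)$ via Corollary \ref{cor:survivalSWT}, costing only $CK^2\log\tfrac{\log K}{\varepsilon}$. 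Both ingredients — the survival-based coupling and the mesoscopic (rather than site-by-site) sectorization — are needed to reach \eqref{def:thetaK2}, and neither is supplied by attractiveness plus the single-trap estimate alone.
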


\begin{remark}[Transience time for macroscopically supercritical and subcritical configurations]
\label{rem:sctransience}

In the case where $s = \delta K$ for $0<\delta<1$, one can obtain an upper-bound $C'_\varepsilon K^2\log(1/\delta)$ instead of \eqref{def:thetaK2}. This can be achieved by choosing for example $a=K/2$ in \eqref{eq:survival4}. So there exists a constant $t_\delta$ such that 
\eqs{\limsup_{K\to\infty}p_{\delta K,K}^{\SWT}(t_\delta K^2)=0.}
This estimate shows that in the presence of a macroscopic amount of excess particles, the transience time's scaling changes and becomes diffusive (of order $K^2$), rather than $\mathcal{O}(K^2\log K)$.

\medskip

For macroscopically \emph{subcritical} configurations, however, the transience time remains of order $K^2\log K$. To illustrate this point, it is enough to consider two configurations, one with a single trap of depth $K$ and the other with a single trap of depth $K+{\delta} K$, which will have exactly the same transience time.
\end{remark}

In light of \eqref{eq:critSWT}, Theorem \ref{thm:cutoff_SWT} is a direct consequence of Lemma \ref{lem:t_tran_critique} with $s=0$. 
To prove the lower bound in \eqref{eq:encadrement_theta}, it is enough to have an estimate for the case with a single trap and $s$ excess particles. For the upper bound, we also need to couple the SWT with a non-periodic SSEP in contact with empty reservoirs. The setup, however, is much more involved, because in the general case, provided the configuration is still transient at time $t$, the position of the remaining traps is random, so that we split $\T_K$ in smaller boxes that can contain the remaining trap.

\medskip
\begin{proof}[Proof of Lemma \ref{lem:t_tran_critique}] 
\textbf{Lower bound. }
For $S(\xi) = 0$, \eqref{def:thetaK1} is the lower bound in Lemma \ref{lem:unique_well}. For $S(\xi)=s > 0$, the arguments are very much the same as in the proof of the lower bound in Lemma \ref{lem:unique_well}, but we choose instead of \eqref{eq:deftau}, the reference time 
\eq{eq:tauKS}{\tau_{K,s}^{\star} := \inf\{t \ge 0 = \E^{K}_{\un}[|\sigma(t)|]\le s\}.}
The latter can be estimated through very similar calculations as in \cite[Section 5]{tran_cutoff_2022}, and we can show that 
\eq{eq:tauKS2}{\sup\limits_{s\le K} \left|\tau_{K,s}^{\star} - \frac{K^2}{\pi^2}\log\frac{K}{s}\right| = \mathcal{O}(K^2),} 
which yields \eqref{def:thetaK1} for $S(\xi) > 0$.  \\

\textbf{Upper bound. }
Let $\noteblue{Q = \lceil\log K\rceil}$ and $\noteblue{\ell=\ell_K:=\lfloor\frac{K}{Q}\rfloor}$, and split $\T_K$ into  $Q+1$ disjoint sets $A_1,\dots ,A_{Q+1}$ of size at most $\ell_K$, defined for $i\in \{1,\dots,Q\}$
\eqs{A_i=\left\{(i-1)\ell+1,i\ell\right\}:=\{k_i, \dots,k_{i+1}-1\},\qquad \mbox{ and }\qquad  A_{Q+1}=\{Q\ell+1,\dots,K\}.}
If $\xi$ is transient at time $t$, one of the $A_i$'s must still contain a trap of positive depth,  so that by union bound
\begin{equation}
\label{eq:splitAi}
\bP_\xi^K\big(\xi(t) \hbox{ is transient}\big) \le \sum_{i=1}^{Q+1} \bP_\xi^K\big(\exists k \in A_i, \xi_k(t)<0\big).
\end{equation}
Now it remains to upper bound the probability that there is a trap in a given sector $A_i$ at time $t$. This will be achieved by coupling the SWT with SSEPs with empty reservoirs, which is the purpose of Section \ref{subsec:unroll}. Then, using Corollary \ref{cor:survivalSWT}, applied to $\varepsilon /(Q+1)$, we conclude that for $t \ge \vartheta_{K,s}(\varepsilon)$, the transience probability is less than $\varepsilon$.

\end{proof}

\medskip
\subsection{Coupling with SSEPs in contact with reservoirs}
\label{subsec:unroll}

In order to complete our proof, we wish to couple a SSEP with traps $\xi$ with a non-periodic SSEP in contact with empty reservoirs, on the event 
\eq{eq:DefEAt}{E_A(t):=\big\{\exists k \in A,\; \xi_k(t)<0\big\},}
appearing in \eqref{eq:splitAi}, on which 
there remains a trap in a fixed segment $A$ at time $t$. Our main result of this section is Corollary \ref{cor:survivalSWT} below, which estimates the $\bP_\xi^K$-probability of $E_A(t)$. The presence of a trap in $A$ breaks the periodicity of the system, in the sense that no particle can have fully crossed $A$ before time $t$. In particular, we couple the  SWT on $\T_K$  with SSEP-like processes on the \emph{non-periodic} segment $\llbracket 1, K+a \rrbracket$, where $\noteblue{a:=|A|}$, with empty reservoirs at the boundaries.
The main idea behind this coupling it to ``unroll'' $\T_K$ (with $A$ on one side) and an additional copy of $A$ on the other side. We then couple particles' trajectories in $\llbracket 1, K+a \rrbracket$, modulo $K$,  with the periodic trajectories in the SWT.  The survival of a particle in the unrolled process will imply that of the corresponding one in $\xi$, and will allow us to estimate the survival probability in $\xi$.

\medskip

More precisely, we will couple $\xi$ with three unrolled processes, depending on the initial positions of the particles: outside of $A$, in the left copy of $A$ and in the right copy of $A$. This construction will help us keep track of the survival of individual particles, and each of the three resulting processes will be upper-bounded by a SSEP in contact with empty reservoirs, provided $\xi(t)$ has a trap in $A$.

\bigskip

We place ourselves in the setting of \emph{distinguishable} particles defined in Section \ref{subsub:echange}, in order for  individual particles to behave as symmetric random walks until they get trapped. Once again, forgetting the particle's labels, we obtain a SSEP with traps.
Consider therefore a labelled SSEP with traps 
\eqs{\hat \xi = (\xi, \Xi_1, ... \Xi_n, \delta_1, ... \delta_n)(t)}
on $\T_K$ with $n\geq 1$ particles defined as in Section \ref{subsub:echange}. Recall that this means there are $n$ labelled particles, the position of particle $p$ at time $t$ is given by $\Xi_p(t)$ and $\delta_p(t)$ is 0 if the particle has been trapped before time $t$, 1 otherwise.
Fix a segment $A \subset \T_K$.  Up to translation, we can assume that $A = \llbracket 1, a\rrbracket$, and we denote by $I$ the set of labels that were not initially in $A$,
\eqs{I:=\{p\in \llbracket 1,n\rrbracket,\; \Xi_p(0)\notin A\}.}
We will use repeatedly our general convention that $\gamma:=\gamma(0)$ represents the \emph{initial state} of a time process $\gamma(\cdot)$, so that for example,  $\Xi_p(0)$ will simply be denoted by  $\Xi_p$.

\paragraph{Central unrolled coupling.} 
The aim of this first coupled process is to track the particles that were initially outside of $A$. Given $\hat \xi(\cdot)$ we build a coupled process $\Upsilon_A(t):=\Upsilon_A(\hat\xi(\cdot),t)$ through its labelled particles'  trajectories, so that we introduce
\eqs{\Upsilon_A(t) = \big\{(\Xi'_p, \delta_p')(t),\; p\in I\big\}.}
Initially, $\Upsilon_A$ only contains particles that were not present in $A$ in $\xi$, so we initially set for $p \in I$
\eq{eq:unfold}{\Xi'_p = \Xi_p \in \llbracket a+1, K \rrbracket \qquad \mbox{ and }\quad \delta'_p = \delta_p=1 .}
Once again, $\Xi'_p(t)\in \llbracket 1,K+ a\rrbracket$ represents the position at time $t$ of the particle labelled $p$, whereas $\delta_p'(t)$ will be $1$ if the particle labelled $p$ is still alive in the system at time $t$, and $0$ if it has been killed before time $t$. This initial mapping is illustrated in Figure \ref{fig:sigma_omega_init}.

\begin{figure}
\centering
\includegraphics[scale=1]{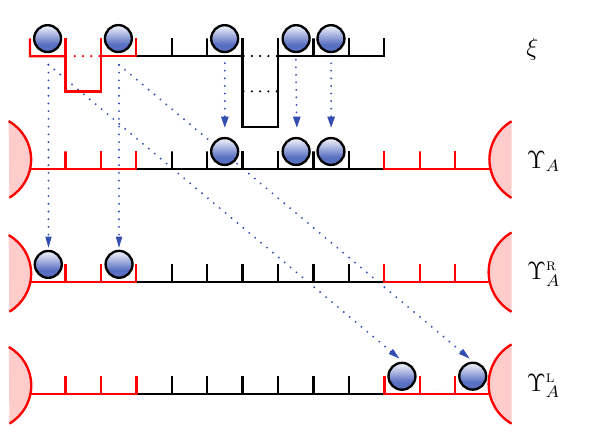}
\caption{Illustration of the initial mapping of $\hat \xi$ to $\Upsilon_A$, $\Upsilon_A^{\textsc{r}}$ and $\Upsilon_A^{\textsc{l}}$, where $A$ is in red. The difference between these three processes lies in the placement of initial particles.}
\label{fig:sigma_omega_init}
\end{figure}

As in Section \ref{subsub:echange}, we define $\hat \xi$ by equipping each edge $(k,k+1)$ with a clock $\mathscr{T}_{k}$, that can trigger jumps or swaps of the labelled particles present at the edge. To ensure that trajectories in $\Upsilon_A(\cdot)$ can be compared to that of $\hat \xi$, $\Upsilon_A(\cdot)$ is defined with the same clocks. This means that for any $k\in \llbracket 0,K+ a\rrbracket$, we associate to the edge $(k,k+1)$ the clock $\mathscr{T}_{k \mod K}$, as shown in Figure \ref{fig:sigma_omega_clock}.

\begin{figure}
\centering
\includegraphics[scale=0.85]{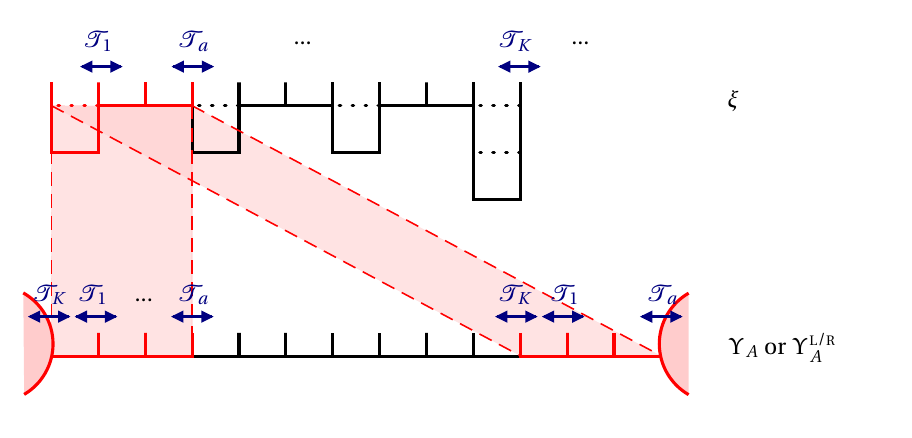}
\caption{Illustration of the mapping of the Poisson clocks from $\xi$ to $\Upsilon_A$ and $\Upsilon_A^{\textsc{l}/\textsc{r}}$, with $A$ represented in red. This ensures that the trajectories of particles in the auxiliary processes are the same as in $\hat \xi$. Notice that the Poisson clocks associated to the red zone in $\hat \xi$ are sent to both red zones in the new process.}
\label{fig:sigma_omega_clock}
\end{figure}

In this description, the boundary edges $(0,1)$ and $(K+a,K+ a+1)$ will represent reservoir interactions at the boundaries. Note that in particular, a single clock ring can affect two associated edges in $\Upsilon_A(\cdot)$ if the clock's edge intersects $A\subset \T_K$, so this process will not \emph{a priori} behave as a classical exclusion process, because two distant particles could jump at the same time. However we will see that on the event $E_A(t)$ (defined in \eqref{eq:DefEAt}), there remains a trap in $A$ so this doesn't occur, as illustrated in Figure \ref{fig:conf_xi}. 

\begin{figure}
\centering
\includegraphics[scale=0.9]{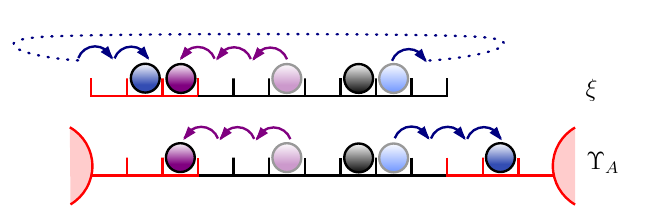}
\caption{Illustration of the case where 2 distant particles could be moved by the same Poisson process in $\Upsilon_A(\cdot)$. This can happen if a particle, here the blue one, reaches the red zone arriving from the right, another one, the purple one, reaches the red zone by arriving from the left. These particles end up being neighbours in $\hat\xi(\cdot)$. For this to happen, there must be no trap in the red zone, otherwise these particles couldn't become neighbours. }
\label{fig:conf_xi}
\end{figure}

Given these Poisson clocks, the evolution of $\Upsilon_A$ is coupled to that of $\xi$, in the following way. We only describe here $\Upsilon_A(\cdot)$'s evolution, since $\hat \xi(\cdot)$'s was described in Section \ref{subsub:echange}. Assume that for some $k\in \T_K$ a clock $\mathscr{T}_{k}$ rings at time $t$. Then, at each associated edge $e':=(k', k'+1)$ in $\llbracket 0,K+ a+1\rrbracket$, as illustrated in Figure \ref{fig:sigma_omega_cas}, we proceed in the following way:
\begin{enumerate}[i)]
\item If there is no $p\in I$ such that $\Xi'_p(t^-)\in e'$, nothing happens in $\Upsilon_A(t)$. 
\item If there is at least one live particle labelled $p\in I$ in $e'$, and $\Xi'_p(t^-)=k'$ (resp. $\Xi'_p(t^-)=k'+1$), then the particle jumps over $e'$, so that we set $\Xi'_p(t)=k'+1$ (resp. $\Xi'_p(t)=k'$). In particular, if another live particle labelled $q\in I$ was on the other site of the edge, the two particles swap positions in $\Upsilon_A(t)$. 
\item If at time $t$, a particle labelled $p\in I$ gets trapped in $\hat\xi$, then we also kill it in $\Upsilon_A(t)$ by setting $\delta_p'(t)=\delta_p(t)=0$.
\item If $k'=0$ or $k'=a+K$, then the clock is associated with one of the boundaries. Then, if a particle labelled $p\in I$ is alive ($\delta_p'=1$) at the corresponding boundary site $k'=1$ or $k'=a+K$ then it is killed as well, still by setting $\delta_p'(t)=0$.
\end{enumerate}

\begin{figure}
\centering
\includegraphics[scale=1]{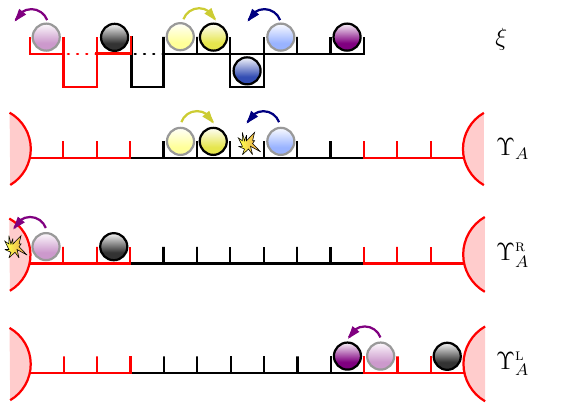}
\caption{Illustration of the joint evolution of $\xi(\cdot)$, $\Upsilon_A(\cdot)$ $\Upsilon_A^{\textsc{r}}(\cdot)$ and $\Upsilon_A^{\textsc{l}}(\cdot)$ with different sorts of jumps. The blue particle jumps to a trap in $\hat\xi(\cdot)$ so it is killed in $\Upsilon_A(\cdot)$, which is symbolised by an explosion. The yellow particle simply jumps to an empty site in $\hat\xi(\cdot)$, so the same jump occurs in $\Upsilon_A(\cdot)$. The purple particle jumps from site $1$ to $K$ in $\hat\xi(\cdot)$, so the corresponding particle in $\Upsilon_A^{\textsc{r}}(\cdot)$ jumps to the left and is killed by the reservoir, while in $\Upsilon_A^{\textsc{l}}(\cdot)$ it simply jumps to the left.}
\label{fig:sigma_omega_cas}
\end{figure}

These dynamics are summarised in Figure \ref{fig:sigma_omega_cas}. Note that, according to the last two rules, we must have at all times $\delta_p'(t)\leq \delta_p(t)$. Further note that, by construction, if a particle is alive in both $\hat \xi(t)$ and $\Upsilon_A(t)$, it has had exactly the same trajectory in  both processes (same jump times and  directions) at least up to time $t$. 

\medskip

If $0 \leq k\leq a$, then when the clock $\mathscr{T}_k$ rings, it affects two edges $(k,k+1)$ and $(k+K, k+K+1)$ in $\Upsilon_A(\cdot)$, and we denote $\Upsilon_A^{k; K+k}$ the resulting configuration after both edges have been updated. If instead  $a<k\leq K-1$, then the single edge $(k,k+1)$ in $\Upsilon_A$ was affected, and we denote by $\Upsilon_A^k$ the resulting configuration after the update. Similarly, we denote by $\hat \xi^k$ the configuration after edge $(k,k+1)\in \T_K$ has been updated. Then, with these notations, the pair $(\hat \xi, \Upsilon_A)(t)$ is a Markov process with joint generator 
\eqs{\mathscr{L}f(\hat \xi, \Upsilon) = \sum_{k=0}^a \left\{f(\hat \xi^{k}, \Upsilon^{k; K+k})-f(\hat \xi, \Upsilon) \right\} +  \sum_{k=a+1}^{K-1} \left\{f(\hat \xi^{k}, \Upsilon^{k})-f(\hat \xi, \Upsilon)\right\}.}
We claim the following.
\begin{lemma} \label{lem:survive_central}
Consider a labelled SSEP with traps $\hat \xi(\cdot)$ and a segment $A$. On the event $E_A(t)$ defined in \eqref{eq:DefEAt}, a labelled particle initially in $\T_K \setminus A$ survives in $\hat \xi(t)$ iff the corresponding particle survives in $\Upsilon_A(t)$.
\end{lemma}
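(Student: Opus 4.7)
The easy direction, namely that survival of $p$ in $\Upsilon_A(t)$ implies survival in $\hat\xi(t)$, will follow immediately from rule iii) of the coupling: whenever $p$ falls into a trap in $\hat\xi$, one simultaneously sets $\delta'_p=0$, so $\delta'_p(s)\le \delta_p(s)$ for every $s$. Crucially, this direction will not require the event $E_A(t)$.

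For the converse, I will first establish a position correspondence: as long as $p\in I$ is alive in both $\hat\xi$ and $\Upsilon_A$ at some time $s \le t$, one has $\Xi_p(s) = ((\Xi'_p(s)-1) \bmod K)+1$. I plan to check this by induction on the successive clock ring times, treating separately the clocks $\mathscr{T}_k$ with $k\in\{a+1,\dots,K-1\}$ (which affect a single edge in $\Upsilon_A$, identical to the activated edge in $\hat\xi$) and those with $k\in\{0,1,\dots,a\}$ (which simultaneously activate the two edges $(k,k+1)$ and $(K+k,K+k+1)$ in $\Upsilon_A$, only one of which can contain $p$ since $p$ has a unique $\Upsilon_A$-position). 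The key fact making the induction work is that in $\hat\xi$, a live particle at an endpoint of an activated edge \emph{always} changes position by one (empty jump, swap, or trap fall), which exactly matches rule ii) in $\Upsilon_A$; moreover, the fact that $\Upsilon_A$ ignores non-$I$ particles is harmless, since those particles only ever swap with $p$ in $\hat\xi$, which still moves $p$ by one step consistently with rule ii).

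Given the position correspondence, I will then argue as follows. Suppose $p$ is alive in $\hat\xi(t)$ and that $E_A(t)$ holds, and pick $k_0 \in A$ with $\xi_{k_0}(t)<0$. Since trap depths are non-increasing along trajectories, $k_0$ carries a trap of positive depth at every time $s\le t$; therefore if the trajectory $\{\Xi_p(s):s\le t\}$ had ever reached $k_0$, the particle would have been killed there before time $t$, contradicting $\delta_p(t)=1$. Hence $\Xi_p(s)\neq k_0$ for all $s\le t$, and by the position correspondence $\Xi'_p(s)\notin\{k_0,K+k_0\}$ throughout. Since $\Xi'_p$ evolves by $\pm 1$ steps and starts in $\{a+1,\dots,K\}$, this confines it to $\{k_0+1,\dots,K+k_0-1\}$; using $1\le k_0\le a$ one obtains $\Xi'_p(s)\notin\{1,K+a\}$ for every $s\le t$, so rule iv) is never triggered for $p$. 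Since rule iii) is not triggered either (as $p$ is alive in $\hat\xi$), one concludes $\delta'_p(t)=1$. The main technical obstacle in this plan is establishing the position correspondence, which requires a careful case analysis of the coupled dynamics; once it is in place, the blocking argument via the persistent trap $k_0$ concludes the proof immediately.
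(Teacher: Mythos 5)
Your proposal is correct and follows essentially the same route as the paper: the easy direction via $\delta'_p\le\delta_p$, and the hard direction via the fact that trajectories coincide (mod $K$) while the particle is alive in both processes, combined with the observation that a trap present in $A$ at time $t$ has had positive depth throughout $[0,t]$ and thus blocks any crossing of $A$. The only cosmetic differences are that you argue directly (confining $\Xi'_p$ strictly between $k_0$ and $K+k_0$, hence away from the reservoir sites $1$ and $K+a$) where the paper argues by contraposition (a reservoir kill would force the particle to have fully crossed $A$ in $\hat\xi$), and that you prove the position correspondence by induction on clock rings, whereas the paper records it as an immediate consequence of the construction.
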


\begin{proof}[Proof of Lemma \ref{lem:survive_central}]
First notice that by construction, for any $p\in I$, $\delta'_p(t)\le \delta_p(t)$, so that if a particle survives in $\Upsilon_A(t)$, it also survives in $\hat \xi(t)$. Assume  now that there exists a particle $p\in I$ such that $\delta'_p(t)=0$ and $\delta_p(t) = 1$, we prove that $E_A(t)$ cannot hold. 

\medskip

The only discrepancies between $\delta$ and $\delta'$ occur when a particle is killed by a reservoir in $\Upsilon_A(\cdot)$ and not in $\hat\xi(\cdot)$ therefore particle $p$ reached a reservoir before time $t$. By assumption,  initially, $\Xi'_p \in \llbracket a+1, K\rrbracket$ and the reservoirs are at sites $0$ and $K + a + 1$, this means particle $p$ fully crossed either the box $\llbracket 0,a\rrbracket $ (leftwards) or the box $\llbracket K,K+a+1\rrbracket $ (rightwards) in $\Upsilon_A(\hat\xi)$ before time $t$. Recall that trajectories $\Xi_p(\cdot)$ and $\Xi'_p(\cdot)$ are identical as long as $\delta_p(s)=\delta_p'(s)=1$. Therefore, before time $t$, in $\hat\xi$, particle $p$ was able to fully cross $A$ without dying, which proves $E_A(t)$ cannot hold.
\end{proof}
\begin{lemma} \label{lem:pas_distant_central}
On the event $E_A(t)$, for all $s \le t$, for any particles labelled $p$, $q \in I$ such that $\delta'_p(s)=\delta'_q(s)=1$, $|\Xi'_p(s) - \Xi'_q(s)| < K-1$. In particular, on $E_A(t)$, for $0 \le k \le a$ and $s \le t$, a clock ring of $\mathscr{T}_k$ never affects two distant particles in $\Upsilon(s)$.
\end{lemma}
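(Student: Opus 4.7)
I plan to argue by contradiction. Suppose that at some time $s \le t$ there exist two distinct live labelled particles $p, q \in I$ with $|\Xi'_p(s) - \Xi'_q(s)| \ge K - 1$. Since both positions lie in $\llbracket 1, K + a \rrbracket$, I may assume, exchanging the two if necessary, that $\Xi'_p(s) \le a + 1$ and $\Xi'_q(s) \ge K$. Setting $j_p := \Xi'_p(s) \in \llbracket 1, a+1 \rrbracket$ and $j_q := \Xi'_q(s) - K \in \llbracket 0, a \rrbracket$, the assumed gap rewrites as $j_p \le j_q + 1$.

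The next step is to project this configuration back to $\hat\xi$. The construction of $\Upsilon_A$ uses the same Poisson clocks $\mathscr{T}_k$ as $\hat\xi$ and kills particles at exactly the same instants, so the trajectory up to time $s$ of any labelled particle still alive in $\Upsilon_A(s)$ coincides, modulo $K$, with the trajectory of the same label in $\hat\xi$. Because the walks only jump between nearest neighbours, continuity of the trajectories gives that $p$, starting from $\Xi'_p(0) \in \llbracket a + 1, K \rrbracket$ and ending at $j_p \le a + 1$, must have visited every site of $\llbracket j_p, a \rrbracket$ in $\hat\xi$ before time $s$. Likewise $q$, whose unrolled coordinate has crossed from $\le K$ to $K + j_q \ge K+1$ (when $j_q \ge 1$), must have visited every site of $\llbracket 1, j_q \rrbracket$ in $\hat\xi$, having wrapped through site $K$.

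I would then invoke the event $E_A(t)$: there exists $k^\star \in A$ such that $\xi_{k^\star}(s') < 0$ for every $s' \le t$. By the very rule defining $\hat\xi$, any live labelled particle that ever lands on $k^\star$ dies on the spot, so neither $p$ nor $q$ can have visited $k^\star$ in $\hat\xi$ before time $s$. Combining this with the visited intervals above gives $k^\star \notin \llbracket j_p, a \rrbracket$ and $k^\star \notin \llbracket 1, j_q \rrbracket$, and since $k^\star \in A = \llbracket 1, a \rrbracket$, this forces $j_q + 1 \le k^\star \le j_p - 1$, hence $j_p \ge j_q + 2$, contradicting $j_p \le j_q + 1$.

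For the final clause about clock rings, a clock $\mathscr{T}_k$ with $0 \le k \le a$ simultaneously governs the edges $(k, k+1)$ and $(k+K, k+K+1)$ in $\Upsilon_A$, so its ring can affect two distinct particles only if one sits on each of these edges; their positions then differ by at least $K - 1$, which the first part of the lemma rules out on $E_A(t)$. The only delicate point I would handle with care is the bookkeeping of the unrolling map in the boundary cases $j_p = a+1$ (where $p$ has not yet entered $A$) and $j_q = 0$ (where $q$ has not yet wrapped around), in which one of the two visited intervals is empty but the constraint $j_p \le j_q + 1$ together with $k^\star \in A$ still yields a contradiction.
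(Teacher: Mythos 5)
Your proof is correct and rests on exactly the same mechanism as the paper's: the trap at some $k^\star\in A$ persists on $[0,t]$, live particles have identical (nearest-neighbour) trajectories in $\hat\xi$ and $\Upsilon_A$, so no live particle can have crossed $k^\star$, and the clock-edge geometry handles the second clause. The paper merely phrases this directly — every live particle is confined to $k^\star<\Xi'_p(s)<K+k^\star$, which immediately gives $|\Xi'_p(s)-\Xi'_q(s)|<K-1$ — whereas you run the contrapositive with the two visited intervals; the content is the same.
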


\begin{proof}[Proof of Lemma \ref{lem:pas_distant_central}]
Let $t\ge 0$ and assume that $E_A(t)$ holds. Then, there exists $k \in \llbracket 1,a\rrbracket$ such that $\xi_k(t)<0$. So, for all $s \le t$, for all particle labelled $p$ alive at time $s$, $k < \Xi'_p(s) < K + k $ (otherwise, particle $p$ would have crossed the trap at site $k$ before time $s$ and therefore could not be alive). This is true for any other particle labelled $q$ alive at time $s$, so $|\Xi'_p(s) - \Xi'_q(s)| < K-1$. 

Let $k \in \llbracket 0, a\rrbracket$ and $s\le t$. For clock $\mathscr{T}_k$ to affect two distant particles in $\Upsilon(s)$, there should be two live particles $p$ and $q$ such that $\Xi'_p(s) \in \{k,k+1\}$ and $\Xi'_q(s) \in \{K + k,K + k+1\}$, but then we would have $|\Xi'_p(s) - \Xi'_q(s)| \ge K-1$, which contradicts the previous result.
\end{proof}

We now define the other couplings.

\paragraph{Right unrolled coupling (with suppression in $A$).} 
The aim of this second coupled process is to track the particles that were initially in A, to the right of a remaining trap. We therefore define
\eqs{I':=\llbracket 1,n\rrbracket \setminus I=\{p\in \llbracket 1,n\rrbracket,\; \Xi_p(0)\in A\},}
namely the set of labels of particles that were initially in $A$. Given $\hat \xi$ we now build a second coupled process $\Upsilon_A^{\textsc{r}}(\cdot)$ through its particle's labelled trajectories, so that we define 
\eqs{\Upsilon_A^{\textsc{r}}(t) = \big\{(\Xi^{\textsc{r}}_p, \delta_p^{\textsc{r}})(t),\; p\in I'\big\}.}
Initially, $\Upsilon_A^{\textsc{r}}$ only contains the particles that were present in $A$ in $\xi$, so that we initially set for $p \in I'$
\eq{eq:unfold2}{\Xi^{\textsc{r}}_p = \Xi_p \in \llbracket 1, a \rrbracket  \qquad \mbox{and }\quad \delta^{\textsc{r}}_p = \delta_p=1.}
The initial mapping is illustrated in Figure \ref{fig:sigma_omega_init}. 
Once again, $\Xi^{\textsc{r}}_p(t)\in \llbracket 1,K+ a\rrbracket$ represents the position at time $t$ of the particle labelled $p$, and $\delta_p^{\textsc{r}}(t)$ is $1$ or $0$ depending on whether the latter has been killed before time $t$.
\medskip

As for $\Upsilon_A(\cdot)$, to ensure that trajectories in $\Upsilon_A^{\textsc{r}}(\cdot)$ can be compared to that of $\hat \xi$, for any $k\in \llbracket 0,K+ a\rrbracket$, we associate to the edge $(k,k+1)$ the clock $\mathscr{T}_{k \mod K}$, as shown in Figure \ref{fig:sigma_omega_clock}. 
\begin{figure}
\centering
\includegraphics[scale=0.9]{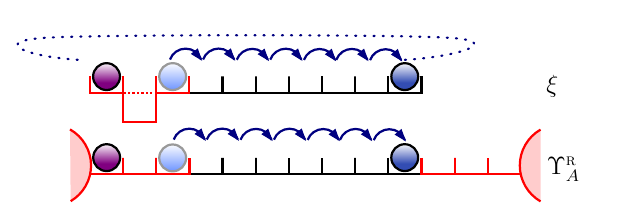}
\caption{Illustration of the case where 2 particles become neighbours in $\xi(\cdot)$ while being distant in $\Upsilon_A^{\textsc{r}}(t)$. When this happens, the particle in the left red zone (here, the purple one) is killed in $\Upsilon_A^{\textsc{r}}(t)$ by applying the map $s_{\textsc{l}}$, and if there remains a trap in $A$ then this particle was necessarily to its left.}
\label{fig:conf_om}
\end{figure}
The evolution of $\Upsilon_A^{\textsc{r}}(\cdot)$ follows the same rules i)-iv) above as $\Upsilon_A(\cdot)$, except that the set of affected labels $I$ is replaced by $I'$. However, on top of those rules, we need to make sure that no two distant live particles can be affected by the same Poisson ring, which,  as illustrated in Figure \ref{fig:conf_om}, can happen even if there is still a trap in $A$. In that case, the two particles are neighbours in $\hat \xi(t)$, although they are distant in $\Upsilon_A^{\textsc{r}}(t)$. To prevent this issue, we add the following rule to i)-iv) to complete the description of the evolution of $\Upsilon_A^{\textsc{r}}(\cdot)$:
\begin{enumerate}
\item [v)] [\emph{Suppression in $A$}] If after a jump at time $t$ has been performed, there exist two live particles labelled $p, q\in I'$ such that $\Xi_q^{\textsc{r}}(t) = \Xi_p^{\textsc{r}}(t) + K-1$, then we kill the particle labelled $p$ by setting $\delta_p^{\textsc{r}}(t)=0$. Note that this can only happen if $\Xi_p^{\textsc{r}}(t) \in A=\llbracket 1,a \rrbracket$.
\end{enumerate}
Adding this rule ensures that there will be no simultaneous jumps of far-away live particles in $\Upsilon_A^{\textsc{r}}(\cdot)$.
Notice that, if $\xi(t)$ has a trap in $A$, a particle that was killed by rule v) before $t$ is necessarily to its left. Indeed, there can be no trap between site $1$ and this particle in $\xi(t)$, since another particle was able to cross this zone in $\xi$ to become its left neighbour. 
Furthermore, we still have at all times $\delta_p^{\textsc{r}}(t)\leq \delta_p(t)$, and by construction, if a particle is alive in both $\hat \xi(t)$ and $\Upsilon_A^{\textsc{r}}(t)$, it has had exactly the same trajectory in  both processes (same jump times and  directions) at least up to time $t$.

\medskip

In order to write down $\Upsilon_A^{\textsc{r}}(\cdot)$'s Markov generator, define a ``suppression in $A$'' map $s_{\textsc{r}}$ as follows:
\eqs{
s_{\textsc{r}}:(\Xi_{p}^{\textsc{r}},\delta_{p}^{\textsc{r}})_{p\in I'}\; \longmapsto\; (\Xi_{p}^{\textsc{r}}, \tilde \delta_{p}^{\textsc{r}})_{p\in I'}}
where for $p\in I'$
\eqs{\tilde \delta_p^{\textsc{r}}  = \delta_p^{\textsc{r}} \left(1 - \un_{\{\Xi_p^{\textsc{r}} \le a+1, \exists q \in I':\; \Xi_q^{\textsc{r}} = \Xi_p^{\textsc{r}} + K-1, \delta^{\textsc{r}}_q = 1\}} \right).}
As wanted, this map kills any live particle with label $p$ in $\llbracket 1, a+1\rrbracket$, provided there is a live particle $q$ at position  $\Xi^{\textsc{r}}_p + K - 1$. 
Using the same notations as before, the pair $(\hat \xi, \Upsilon_A^{\textsc{r}})(t)$ is a Markov process with joint generator 
\eqs{\mathscr{L}_{\textsc{r}}f(\hat \xi, \Upsilon) = \sum_{k=0}^a \left\{f(\hat \xi^{k}, s_{\textsc{r}}(\Upsilon^{k; K+k}))-f(\hat \xi, \Upsilon) \right\} +  \sum_{k=a+1}^{K-1} \left\{f(\hat \xi^{k}, s_{\textsc{r}}(\Upsilon^{k}))-f(\hat \xi, \Upsilon)\right\}.}
\begin{lemma}\label{lem:right_k}
For $k \in A$, we have for any $p\in I'$, $t>0$
\eq{eq:eventright}{F_{k,p}^{\textsc{r}}(t):=\big\{k < \Xi_p\leq a,\; \xi_k(t) < 0,\;\delta_p(t)=1 \big\}\quad \Longrightarrow \quad  \delta_p^{\textsc{r}}(t)=1.}
In other words, if particle $p$ was initially to the right of $k$, and at time $t$ it is still alive in $\hat\xi$, and there is a trap at site $k$, then particle $p$ is also alive in $\Upsilon_A^{\textsc{r}}(t)$. 

\medskip

In particular, defining  $\noteblue{k_A^-(t) = \inf \{k \in A,\; \xi_k(t) < 0\}}$  the leftmost trap in $A$ at time $t$,
\eq{eq:eventkAt}{E_A(t)\cap F_{k_A^-(t),p}^{\textsc{r}}(t)\quad \Longrightarrow \quad  \delta_p^{\textsc{r}}(t)=1.}
\end{lemma}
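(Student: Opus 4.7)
The plan is to show that under the event $F_{k,p}^{\textsc{r}}(t)$, none of the three death mechanisms in $\Upsilon_A^{\textsc{r}}$---trap death via rule iii), reservoir death via rule iv), and suppression via rule v)---can apply to $p$ during $[0,t]$. The first ingredient is a monotonicity remark on the trap depth: since each particle entering the trap at $k$ reduces $-\xi_k$ by exactly one, the trap value $\xi_k$ is non-decreasing, and the hypothesis $\xi_k(t)<0$ forces $\xi_k(s)\le\xi_k(t)<0$ for every $s\in[0,t]$. Hence $k$ remains an active trap throughout $[0,t]$, and any labelled particle alive in $\hat\xi$ at any time $s\le t$ must have avoided site $k$ up to that time.

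Next I would exploit the clock coupling: whenever a particle $q\in I'$ is alive in both $\hat\xi$ and $\Upsilon_A^{\textsc{r}}$, its trajectories in the two processes agree modulo $K$. Since $\Xi_q^{\textsc{r}}$ moves by $\pm 1$ on $\llbracket 1,K+a\rrbracket$ and must avoid both $k$ and $K+k$ (the only two sites of $\Upsilon_A^{\textsc{r}}$ lying above $k\in\T_K$), continuity of its trajectory confines every live $q$ to the connected component of its starting position in $\llbracket 1,K+a\rrbracket\setminus\{k,K+k\}$. In particular, particles with $\Xi_q>k$ stay in $\llbracket k+1,K+k-1\rrbracket$, and particles with $\Xi_q<k$ stay in $\llbracket 1,k-1\rrbracket$.

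Applied to $p$, whose initial position satisfies $k+1\le\Xi_p\le a$, this immediately excludes rule iv): one has $\Xi_p^{\textsc{r}}(s)\in\llbracket k+1,K+k-1\rrbracket\subset\llbracket 2,K+a-1\rrbracket$ for every $s\le t$, so $p$ never reaches the boundary sites $1$ or $K+a$. Rule iii) is excluded by $\delta_p(t)=1$ together with the non-increase of $\delta_p(\cdot)$. For rule v), the confinement above forces every live $q\in I'$ to satisfy $\Xi_q^{\textsc{r}}\le K+k-1$, strictly less than the required $\Xi_p^{\textsc{r}}+K-1\ge K+k$, so $p$ is never suppressed by $s_{\textsc{r}}$. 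Combining the three exclusions gives $\delta_p^{\textsc{r}}(t)=1$, proving \eqref{eq:eventright}. Finally, \eqref{eq:eventkAt} follows by specialising to $k:=k_A^-(t)$, which is well-defined on $E_A(t)$ and turns both hypotheses of $F_{k,p}^{\textsc{r}}(t)$ into automatic consequences of $E_A(t)\cap F_{k_A^-(t),p}^{\textsc{r}}(t)$. I expect the second step---the connectivity analysis of $\llbracket 1,K+a\rrbracket\setminus\{k,K+k\}$ in the presence of possible wrap-arounds in $\hat\xi$---to be the main technical point, even though it becomes clean precisely because any such wrap-around would require a jump onto the persistent trap at $k$.
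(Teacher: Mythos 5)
Your proposal is correct and follows essentially the same route as the paper's proof: rule out the reservoir and suppression mechanisms by observing that, since the trap at $k$ persists on $[0,t]$ and trajectories agree while a particle is alive in both processes, all live particles of $I'$ are confined between $k$ and $K+k$, making both death mechanisms (and trivially rule iii)) impossible for $p$. Your explicit connectivity/confinement step is just a more detailed writing of the argument the paper states tersely.
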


\begin{proof}[Proof of Propositions \ref{lem:right_k}]
Assume that $\delta_p(t)=1$, if $\delta_p^{\textsc{r}}(t)=0$ only two things can have happened: 
\begin{itemize}
\item Either particle $p$ was killed by a reservoir at time $s\leq t$ in $\Upsilon_A^{\textsc{r}}(\cdot)$, but this is not possible on $F_{k,p}^{\textsc{r}}(t)$ because particle $p$ has the same trajectory up to time $s$ in both $\hat\xi(\cdot)$ and $\Upsilon_A^{\textsc{r}}(\cdot)$, so that due to the trap at site $k$, we must have $\forall s'<s$
\eqs{k<\Xi^{\textsc{r}}_p(s')< K+k.}
\item Or at time $s\leq t$,  particle $p$ was suppressed in $A$ by another live particle labelled $q\in I'$, but  this is not possible either on $F_{k,p}^{\textsc{r}}(t)$. Indeed, on $F_{k,p}^{\textsc{r}}(t)$,  $\Xi^{\textsc{r}}_q$ must also have been to the right of $k$ initially (otherwise it would have been killed by the reservoir or the trap before reaching site $K$), so that by the same argument as in the previous case, for any $s'<s$, 
\eqs{k<\Xi^{\textsc{r}}_p(s')\qquad \mbox{ and }\qquad \Xi^{\textsc{r}}_q(s')<K+k,}
and we cannot have $\Xi_q^{\textsc{r}}(s') = \Xi_p^{\textsc{r}}(s') + K-1$.
\end{itemize}
 The second implication \eqref{eq:eventkAt} is a direct consequence of the first.
\end{proof}

We now perform a similar coupling but with particles initially in the right copy of $A$.
\paragraph{Left unrolled coupling (with suppression in $\llbracket K+1,K+a \rrbracket$).} 
We finally build a third coupled process $\Upsilon_A^{\textsc{l}}(\cdot)$ which will be very similar to $\Upsilon_A^{\textsc{r}}(\cdot)$. Its aim is to track the particles initially in $A$, but to the left of a trap. It is defined with all particles starting in the right copy of $A$, namely $\noteblue{\tilde{A}:=\llbracket K+ 1, K + a \rrbracket}$ and an analogous ``suppression in $\tilde{A}$'' mechanism. More precisely, define
\eqs{\Upsilon_A^{\textsc{l}}(t) = \big\{(\Xi^{\textsc{l}}_p, \delta_p^{\textsc{l}})(t),\; p\in I'\big\}.}
Initially, $\Upsilon_A^{\textsc{l}}$ only contains in $\tilde{A}$ the particles that were present in $A$ in $\xi$, so that we set for $p\in I'$
\eq{eq:unfold3}{\Xi^{\textsc{l}}_p = K + \Xi_p \in \tilde{A}  \qquad \mbox{and }\quad \delta^{\textsc{l}}_p = \delta_p=1.}
The initial mapping is illustrated in Figure \ref{fig:sigma_omega_init}. 
We adopt similar notations as in the previous paragraph, and $\Upsilon_A^{\textsc{l}}(\cdot)$ follows the same rules as $\Upsilon_A^{\textsc{r}}(\cdot)$, with the exception that rule v) is replaced by 
\begin{enumerate}
\item [v')] [\emph{Suppression in $\tilde{A}$}] If after a jump at time $t$ has been performed, there exists two live particles labelled $p, q\in I'$ such that $\Xi_p^{\textsc{l}}(t) = \Xi_q^{\textsc{l}}(t) + K-1$, then we kill the particle labelled $p$ by setting $\delta_p^{\textsc{l}}(t)=0$.
\end{enumerate}
 Again, this situation could happen even if there is a trap in $A$, as illustrated by Figure \ref{fig:conf_om}. This time, if $\xi(t)$ has a trap in $A$, a particle that was killed by rule v') before $t$ is necessarily to its right. Indeed, there can be no trap between this particle and site $K+a$ in $\xi(t)$, since another particle was able to cross this zone in $\xi$ to become its right neighbour. We define accordingly a ``suppression  in $\tilde{A}$'' map $s_{\textsc{l}}$
 
 \eqs{
s_{\textsc{l}}:(\Xi_{p}^{\textsc{l}},\delta_{p}^{\textsc{l}})_{p\in I'}\; \longmapsto \; (\Xi_{p}^{\textsc{l}}, \tilde \delta_{p}^{\textsc{l}})_{p\in I'}}
where for $p\in I'$
\eqs{\tilde \delta_p^{\textsc{l}}  = \delta_p^{\textsc{l}} \left(1 - \un_{\{\Xi_p^{\textsc{l}} \ge K, \exists q \in I':\; \Xi_p^{\textsc{l}} = \Xi_q^{\textsc{l}} + K-1, \delta^{\textsc{l}}_q = 1\}} \right).}
There again, at any time $t$, $\delta_p^{\textsc{l}}(t)\leq \delta_p(t)$, and by construction, if a particle is alive in both $\hat \xi(t)$ and $\Upsilon_A^{\textsc{l}}(t)$, it has had exactly the same trajectory in  both processes (same jump times and  directions) at least up to time $t$. 
As for $\Upsilon_A^{\textsc{r}}(t)$, and using the same notations as before, the pair $(\hat \xi, \Upsilon_A^{\textsc{l}})(t)$ is a Markov process with joint generator 
\eqs{\mathscr{L}_{\textsc{l}}f(\hat \xi, \Upsilon) = \sum_{k=0}^a \left\{f(\hat \xi^{k}, s_{\textsc{l}}(\Upsilon^{k; K+k}))-f(\hat \xi, \Upsilon) \right\} +  \sum_{k=a+1}^{K-1} \left\{f(\hat \xi^{k}, s_{\textsc{l}}(\Upsilon^{k}))-f(\hat \xi, \Upsilon)\right\}.}
For this left coupling, we can state an analogous result as for the right coupling, namely:
\begin{lemma}
\label{lem:left_k}
For $k \in A$, we have for any $p>m$, $t>0$
\eq{eq:eventleft2}{F_{k,p}^{\textsc{l}}(t):=\big\{1 \leq  \Xi_p < k,\; \xi_k(t) < 0,\;\delta_p(t)=1 \big\}\quad \Longrightarrow \quad  \delta_p^{\textsc{l}}(t)=1.}
In other words, if particle $p$ was initially to the left of $k$ in $\hat\xi$, and at time $t$ it is still alive in $\hat\xi$, and there is a trap at site $k$, then particle $p$ is also alive in $\Upsilon_A^{\textsc{l}}(t)$. 

\medskip

In particular, defining  $\noteblue{k_A^+(t) = \sup \{k \in A,\; \xi_k(t) < 0\}}$,
\eq{eq:eventkAt2}{E_A(t)\cap F_{k_A^+(t),p}^{\textsc{l}}(t)\quad \Longrightarrow \quad  \delta_p^{\textsc{l}}(t)=1.}
\end{lemma}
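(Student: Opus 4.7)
The proof is the symmetric counterpart of that of Lemma \ref{lem:right_k}: the left coupling is designed to differ from $\hat \xi$ only by killings at the reservoirs at $0$ and $K+a+1$ and by the suppression rule v') in $\tilde A$. The plan is to argue by contradiction that on $F_{k,p}^{\textsc{l}}(t)$, neither death mechanism can actually fire for the particle labelled $p$ before time $t$, after which the second implication \eqref{eq:eventkAt2} follows by specialising $k$ to $k_A^+(t)\in A$.

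Assume $\delta_p(t)=1$ but $\delta_p^{\textsc{l}}(t)=0$. Since the trajectory of $p$ in $\Upsilon_A^{\textsc{l}}(\cdot)$ coincides with that in $\hat\xi(\cdot)$ up to its death, and since $\xi_k(s)<0$ for every $s\leq t$ (traps can only deepen but not fill until consumed by a jump onto them), the particle labelled $p$ never visits site $k$ in $\hat \xi(\cdot)$ on $[0,t]$. Starting from $\Xi_p^{\textsc{l}}(0)=K+\Xi_p\in\llbracket K+1,K+k-1\rrbracket$, the lifted trajectory is therefore forced to remain in the unrolled arc $\llbracket k+1,K+k-1\rrbracket$. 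Since $1\leq k\leq a$, this arc is strictly contained in $\llbracket 1,K+a\rrbracket$ and does not touch either reservoir, which rules out death by rule iv).

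It remains to rule out suppression by rule v'), i.e.\ the existence of $s'\leq t$ and a live $q\in I'$ with $\Xi_p^{\textsc{l}}(s')=\Xi_q^{\textsc{l}}(s')+K-1$. Since $\xi_k(s')<0$, necessarily $\Xi_q\in A\setminus\{k\}$, so we split into two cases. If $\Xi_q<k$, then by the argument above applied to $q$, $\Xi_q^{\textsc{l}}(s')\in\llbracket k+1,K+k-1\rrbracket$, whence $|\Xi_p^{\textsc{l}}(s')-\Xi_q^{\textsc{l}}(s')|\leq K-2<K-1$, a contradiction. If instead $\Xi_q>k$, then $\Xi_q^{\textsc{l}}(0)=K+\Xi_q\in\llbracket K+k+1,K+a\rrbracket$, and the same no-crossing-of-$k$ argument (applied to $q$, which is alive at time $s'$ in $\hat\xi$ since it is alive in $\Upsilon_A^{\textsc{l}}$ at time $s'$ by the ordering $\delta_q^{\textsc{l}}\leq \delta_q$) confines $\Xi_q^{\textsc{l}}(s')$ to $\llbracket K+k+1,K+a\rrbracket$. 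In particular $\Xi_q^{\textsc{l}}(s')>K+k>\Xi_p^{\textsc{l}}(s')$, so $\Xi_p^{\textsc{l}}(s')<\Xi_q^{\textsc{l}}(s')<\Xi_q^{\textsc{l}}(s')+K-1$, again contradicting the suppression identity. This proves \eqref{eq:eventleft2}; the statement \eqref{eq:eventkAt2} is then the special case $k=k_A^+(t)$, which is a well-defined element of $A$ on the event $E_A(t)$.

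The only non-routine step is handling case $\Xi_q>k$ in the suppression analysis: one must make sure that a particle initially on the ``wrong side'' of the trap either gets killed in $\Upsilon_A^{\textsc{l}}$ (by the right reservoir, had it crossed rightwards past $K+a$) or remains confined strictly to the right of $K+k$, so as to be incapable of sitting at the specific offset $K-1$ to the left of $p$. I expect this to be the main technical point but it reduces, as above, to the same lifting argument used for $p$ applied now to $q$ on the opposite arc.
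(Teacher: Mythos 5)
Your argument is correct and is essentially the paper's proof: the paper disposes of Lemma \ref{lem:left_k} ``by symmetry'' with Lemma \ref{lem:right_k}, and your write-up is exactly that mirrored argument (confinement of the lifted trajectory to $\llbracket k+1,K+k-1\rrbracket$ ruling out reservoir death, then a two-case analysis on $\Xi_q$ ruling out the suppression rule v'), including the correct handling of the aliveness of $q$ via $\delta_q^{\textsc{l}}\le\delta_q$. One small slip: the parenthetical ``traps can only deepen but not fill'' is backwards --- in the SWT trap depth can only \emph{decrease} (so $\xi_k$ is non-decreasing), and it is precisely this monotonicity that yields $\xi_k(s)<0$ for all $s\le t$ from $\xi_k(t)<0$.
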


By symmetry, Lemma \ref{lem:left_k} is proved in the exact same way as Lemma \ref{lem:right_k}, we do not repeat the proof. Recall that the indicator $\delta'_p(t)$ was defined for $p\in I$, and that both $\delta_p^{\textsc{r}}(t)$ and $\delta_p^{\textsc{l}}(t)$ were defined for $p\in I'$. We extend for convenience both these definitions to the whole segment $\llbracket 1,n\rrbracket$, by setting $\forall t\geq 0$
\eqs{\delta_p'(t)=0\quad \mbox{ for }\; p\in I'\qquad\mbox{ and }\qquad \delta_p^{\textsc{r}}(t)=\delta_p^{\textsc{l}}(t)=0 \quad \mbox{ for }p\in I.}
We can now estimate the survival chance in $\hat\xi$ w.r.t. the one in the three auxilliary processes:
\begin{corollary}
\label{cor:survival}
For any $p\leq n$, and any segment $A\subset \T_K$
\eq{eq:survival}{E_A(t)\qquad \Longrightarrow\qquad \delta_p(t)\leq \delta'_p(t)+\delta_p^{\textsc{r}}(t)+\delta_p^{\textsc{l}}(t).}
In other words, provided there is still a trap in $A$ in $\xi(t)$, if particle $p$ is still active in $\hat\xi(t)$, it is also active in at least one of the three auxiliary processes $(\Upsilon_A,\Upsilon_A^{\textsc{r}},\Upsilon_A^{\textsc{l}})(t)$.
\end{corollary}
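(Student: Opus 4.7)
The proof amounts to a clean case analysis on the initial position of particle $p$, with the three auxiliary processes tiling the three possibilities (started outside $A$, started in $A$ and protected by a trap to its left, or started in $A$ and protected by a trap to its right). The heavy lifting has already been done in Lemmas \ref{lem:survive_central}, \ref{lem:right_k} and \ref{lem:left_k}; all that remains is to check that these three cases cover every live particle under $E_A(t)$.

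First, fix $t \geq 0$ and suppose $E_A(t)$ holds and $\delta_p(t)=1$. Recall that by convention $\delta'_p(t)=0$ for $p\in I'$ and $\delta_p^{\textsc{r}}(t)=\delta_p^{\textsc{l}}(t)=0$ for $p\in I$, so exactly one of the three auxiliary indicators has a chance of being non-zero depending on whether $p\in I$ or $p\in I'$. If $p\in I$ (i.e.\ $\Xi_p\in\T_K\setminus A$), Lemma \ref{lem:survive_central} yields $\delta'_p(t)=1$ directly, and we are done.

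Now suppose $p\in I'$, so that $\Xi_p\in A=\llbracket 1,a\rrbracket$. On $E_A(t)$, the sites $k_A^-(t)$ and $k_A^+(t)$ are well-defined elements of $A$ where $\xi_{k_A^\pm(t)}(t)<0$. Since trap depths only decrease in the SWT dynamics, these sites were already traps at time $0$; in particular, since $\xi_{\Xi_p}(0)=1$, we have $\Xi_p\neq k_A^-(t)$ and $\Xi_p\neq k_A^+(t)$. We distinguish two subcases:
\begin{itemize}
\item If $\Xi_p>k_A^-(t)$, then the triple $(k_A^-(t),\Xi_p,t)$ satisfies the hypothesis of Lemma \ref{lem:right_k}, so $\delta_p^{\textsc{r}}(t)=1$.
\item If $\Xi_p<k_A^-(t)\leq k_A^+(t)$, then the triple $(k_A^+(t),\Xi_p,t)$ satisfies the hypothesis of Lemma \ref{lem:left_k}, so $\delta_p^{\textsc{l}}(t)=1$.
\end{itemize}
In either case, $\delta_p^{\textsc{r}}(t)+\delta_p^{\textsc{l}}(t)\geq 1$, which combined with the case $p\in I$ yields \eqref{eq:survival}.

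There is no real obstacle here: the subtle work went into defining the suppression rules v) and v') and into proving Lemmas \ref{lem:right_k} and \ref{lem:left_k}, which ensure that the artificial killings in $\Upsilon_A^{\textsc{r}}$ (resp.\ $\Upsilon_A^{\textsc{l}}$) only affect particles that are shielded from the leftmost (resp.\ rightmost) trap by another live particle, so the protection by the extremal trap of $A$ is preserved. The only thing to verify carefully is the trivial but essential fact that a live particle's initial site is never a trap, so $\Xi_p$ is strictly separated from $k_A^\pm(t)$ and falls on exactly one side of the chosen extremal trap.
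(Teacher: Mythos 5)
Your proof is correct and follows essentially the same route as the paper: a case split on whether particle $p$ started outside $A$ (Lemma \ref{lem:survive_central}) or inside $A$, in which case its position relative to the extremal traps $k_A^-(t)$, $k_A^+(t)$ puts it in the scope of Lemma \ref{lem:right_k} or Lemma \ref{lem:left_k}. Your extra observation that $\Xi_p$ cannot coincide with $k_A^\pm(t)$ (since a surviving trap at time $t$ was already a trap at time $0$) is a nice touch that makes the strict inequalities in the lemmas' hypotheses explicit, a point the paper's proof passes over silently.
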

\begin{proof}
This result is a straightforward consequence of Lemmas \ref{lem:survive_central}, \ref{lem:right_k} and \ref{lem:left_k}. Assume that $E_A(t)$ holds and $\delta_p(t)=1$, otherwise there is nothing to show. If initially  $\Xi_p\in \T_K\setminus A$, then according to Lemma \ref{lem:survive_central} we have $\delta_p'(t)=1$. Otherwise, $\Xi_p\in A$, and we must have either  $\Xi_p\geq k_A^-(t)$, or $\Xi_p\leq k_A^+(t)$, or both of these are true. In the first case, according to Lemma \ref{lem:right_k}, $\delta_p^{\textsc{r}}(t)=1$, whereas in the second, according to Lemma \ref{lem:left_k}, $\delta_p^{\textsc{l}}(t)=1$. In the third case, using both propositions, we have $\delta_p^{\textsc{r}}(t)=\delta_p^{\textsc{l}}(t)=1$. Ultimately, in all cases one of the auxiliary processes' $\delta_p$ must be $1$, which concludes the proof.
\end{proof}

Now, only remains to estimate the survival chances in the auxiliary processes. For this purpose, given $\Upsilon_A(\cdot)$, $\Upsilon_A^{\textsc{l}}(\cdot)$, and $\Upsilon_A^{\textsc{r}}(\cdot)$,  define for $k\in \llbracket 1,K+ a\rrbracket$
\eq{eq:CPsigma}{\sigma_k(t)=\sum_{p\in I} \delta_p'(t)\un_{\{\Xi'_p(t)=k\}}, \quad \sigma^{\textsc{r}}_k(t)=\sum_{p\in I'} \delta^{\textsc{r}}_p(t)\un_{\{\Xi^{\textsc{r}}_p(t)=k\}},\quad  \sigma^{\textsc{l}}_k(t)=\sum_{p\in I'} \delta^{\textsc{l}}_p(t)\un_{\{\Xi^{\textsc{l}}_p(t)=k\}}}
the associated \emph{unlabelled} exclusion configurations $\sigma(t),\; \sigma^{\textsc{r}}(t),\; \sigma^{\textsc{l}}(t)\in \{0,1\}^{K+a}$ defined by the \emph{live particles} in each process.

\begin{proposition}
\label{prop:coupling_central}
There exists a coupling $\Theta$ between 
\begin{itemize}
\item the labelled SWT $\hat\xi(\cdot)$ (and in particular the three coupled processes $\sigma(\cdot)$, $\sigma^{\textsc{r}}(\cdot)$, $\sigma^{\textsc{l}}(\cdot)$ defined by  \eqref{eq:CPsigma}) 
\item three unlabelled SSEPs $\tilde{\sigma}(\cdot),\; \tilde{\sigma}^{\textsc{r}}(\cdot), \tilde{\sigma}^{\textsc{l}}(\cdot)$ on $\llbracket 1,K + a \rrbracket$ in contact with empty reservoirs at the boundaries. These three processes have distribution $\Q^{K+a+1}$ defined in Section \ref{sec:uniquetrap}, and respectively start from the same initial configurations \eq{eq:InitConfigcouplingTheta}{\tilde \sigma(0):=\sigma(0),\qquad \tilde \sigma^{\textsc{r}}(0):=\sigma^{\textsc{r}}(0)\qquad \mbox{ and }\qquad \tilde \sigma^{\textsc{l}}(0):=\sigma^{\textsc{l}}(0).}
\end{itemize}
This coupling is such that $\Theta$-a.s. 
\eq{eq:boundEAt}{E_A(t) \qquad \Longrightarrow \qquad \begin{cases}
\sigma(s) \le \tilde \sigma(s),\\
\sigma^{\textsc{r}}(s) \le \tilde \sigma^{\textsc{r}}(s)\\
\sigma^{\textsc{l}}(s) \le \tilde \sigma^{\textsc{l}}(s)\end{cases}
\qquad 
\forall s\leq t.}
\end{proposition}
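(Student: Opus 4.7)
The coupling $\Theta$ will be constructed on an extended probability space carrying $\hat\xi(\cdot)$ (with its Poisson clocks $\{\mathscr{T}_k\}_{k\in\T_K}$ and the reservoir clocks of $\Upsilon_A^\bullet$) together with enough additional independent Poisson clocks to drive the three SSEPs $\tilde\sigma^\bullet$. The three pairs $(\sigma^\bullet, \tilde\sigma^\bullet)$ are handled symmetrically, so I will describe only the pair $(\sigma, \tilde\sigma)$ in detail.

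The cornerstone of the construction is Lemma \ref{lem:pas_distant_central}, together with its analogues for $\sigma^{\textsc{r}}$ and $\sigma^{\textsc{l}}$, which follow directly from the suppression rules v) and v'). On the event $E_A(t)$, no two live particles in $\Upsilon_A$ ever lie at distance $\ge K-1$ at a time $s \le t$. Consequently, when a clock $\mathscr{T}_k$ with $k \in \llbracket 0, a\rrbracket$ rings and attempts to update both duplicated edges $(k,k+1)$ and $(k+K, k+K+1)$, at most one of them actually carries a live particle in $\sigma$, so the apparent ``double swap'' reduces to a single effective SSEP swap on the relevant edge.

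I then define $\tilde\sigma$ by endowing each edge $(k,k+1)$ of $\llbracket 0,K+a\rrbracket$ with a clock $\widetilde{\mathscr{T}}_k$ of rate $1$: for interior non-duplicated edges ($k \in \llbracket a+1, K-1\rrbracket$), set $\widetilde{\mathscr{T}}_k := \mathscr{T}_k$; on each pair of duplicated edges, use independent auxiliary randomness to route each ring of $\mathscr{T}_{k\bmod K}$ to the copy that effectively receives the swap in $\sigma$, supplemented by an independent thinning to maintain rate $1$ on the other. By the Poisson splitting formula, the resulting edge clocks of $\tilde\sigma$ are independent rate-$1$ Poisson processes, so $\tilde\sigma$ is a genuine SSEP with empty reservoirs distributed as $\Q^{K+a+1}$, started from $\tilde\sigma(0) = \sigma(0)$ via \eqref{eq:InitConfigcouplingTheta}.

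The pathwise inequality $\sigma(s) \le \tilde\sigma(s)$ for all $s \le t$ on $E_A(t)$ is then proved by induction on event times; the base case holds by \eqref{eq:InitConfigcouplingTheta}. In the induction step, three kinds of events occur: coupled swaps shared by both processes are order-preserving by the standard basic-coupling monotonicity of SSEP swaps; reservoir absorptions, trap killings in $\hat\xi$ and suppressions in $\Upsilon_A^{\textsc{r/l}}$ only remove particles from $\sigma^\bullet$ without affecting $\tilde\sigma^\bullet$, preserving the order trivially; and supplementary independent swaps in $\tilde\sigma$ only reshuffle or add particles on edges where $\sigma$ carries no live particle, again preserving the order by basic monotonicity. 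The main technical obstacle is the careful design of the clock routing so as to simultaneously yield independent rate-$1$ Poisson marginals on $\tilde\sigma$'s edges and pathwise alignment of effective swap times between $\sigma$ and $\tilde\sigma$ on the duplicated edges; the alignment itself is unambiguous on $E_A(t)$ thanks to Lemma \ref{lem:pas_distant_central}, but verifying that the auxiliary-randomness construction does not spoil either marginal independence or the stochastic order requires delicate bookkeeping.
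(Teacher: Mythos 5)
Your construction follows the same route as the paper's proof: the same key input (Lemma \ref{lem:pas_distant_central} together with the suppression rules v) and v'), which guarantee that on $E_A(t)$ the two copies of a duplicated edge never simultaneously carry live particles), the same idea of letting $\tilde\sigma^{\bullet}$ follow the $\hat\xi$-clocks on edges currently occupied by live $\sigma^{\bullet}$-particles and independent Poisson randomness elsewhere, and the same one-sided discrepancy argument (particles are killed in $\sigma^{\bullet}$ by traps, reservoirs or suppression, never in $\tilde\sigma^{\bullet}$ except at reservoirs) for the domination. However, the point you defer as ``delicate bookkeeping'' is precisely the content of the proposition, and your routing prescription as written has a genuine gap there: ``route each ring of $\mathscr{T}_{k\bmod K}$ to the copy that effectively receives the swap in $\sigma$'' is only meaningful while $E_A$ holds. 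Once the last trap in $A$ has disappeared, both duplicated copies can carry live particles at the same time, the routing becomes ambiguous, and sending a single ring to both copies would create simultaneous jumps across two distant edges, so $\tilde\sigma^{\bullet}$ would not have law $\Q^{K+a+1}$ — and this law must hold for the whole trajectory, not only up to the (random, not known in advance) time at which $E_A$ fails. The paper resolves exactly this by the explicit adapted switching rule \eqref{eq:Poisson2}: edge $(k,k+1)$ of $\tilde\sigma^{*}$ uses $\mathscr{T}_k$ at those times when $E_A$ still holds \emph{and} the edge carries a live $\sigma^{*}$-particle, and an auxiliary independent clock $\mathscr{S}_k$ otherwise; Poissonianity of each $\widetilde{\mathscr{T}}^{*}_k$ follows because the switching condition is adapted, and independence in $k$ follows from the non-coexistence property you invoke. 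Relatedly, ``supplemented by an independent thinning to maintain rate $1$ on the other'' should be a superposition with independent rings, and such supplementation is needed on \emph{every} edge during the times it carries no live particle (not only on the non-receiving copy), otherwise its clock is not rate $1$. With the switching rule made explicit as in \eqref{eq:Poisson2}, your argument coincides with the paper's proof.
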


\begin{proof}

Note that in order to define a SSEP with reservoirs on $\llbracket 1,K + a \rrbracket$, it is enough to define an initial state, and, for $k\in \llbracket 0, K+a\rrbracket$ independent, rate $1$ Poisson clocks $\widetilde{\mathscr{T}}_k$, each associated with an edge $ (k,k+1)$ in the system (including the boundary edges $(0,1)$ and $(K+a, K+a+1)$ linking to the reservoirs). Recall that $\hat\xi$ (and its auxiliary processes) was built using a set of Poisson clocks $\mathscr{T}_k$ for $k\in \T_K$, and that this set was extended to $k\in \llbracket 0, K+a\rrbracket$ by $\mathscr{T}_k:=\mathscr{T}_{k \!\mod \! K}$. Consider now a second set of rate $1$, independent clocks $\mathscr{S}_k$, this time defined on the extended set $k\in \llbracket 0, K+a\rrbracket$. Given the auxiliary processes $\sigma(\cdot)$, $\sigma^{\textsc{r}}(\cdot)$, $\sigma^{\textsc{l}}(\cdot)$, we define for $k\in \llbracket 0, K+a\rrbracket$, and for $*\in\{\;\,,\textsc{r},\textsc{l}\}$
\eq{eq:Poisson2}{\widetilde{\mathscr{T}}_k^*:= \Big\{t\in \mathscr{T}_k : E_A(t)\cap \{\sigma_k^*(t)+\sigma_{k+1}^*(t)\geq 1\}\Big\}\cup \Big\{t\in \mathscr{S}_k : E_A(t)^c\cup\{ \sigma_k^*(t)+\sigma_{k+1}^*(t)=0\}\Big\}}
In other words, while an edge $e:=(k,k+1)$ contains at least one \emph{live particle} in $\sigma^*$, and there is still a trap in $A$ in $\xi$, it is affected in $\tilde{\sigma}^*$ by $\mathscr{T}_{k}$, and by $\mathscr{S}_k$ otherwise. Let us now check that Equation \eqref{eq:Poisson2} defines  for $*\in\{\;\,,\textsc{r},\textsc{l}\}$ three sets of rate $1$, independent (for $k\in \llbracket 0, K+a\rrbracket$) Poisson clocks. Because $E_A(t)$ and the $\sigma_k^{*}(\cdot)$ are all observables at time $t$ of a Markov process, each $\widetilde{\mathscr{T}}_k^{*}$ is indeed a rate $1$ Poisson clock. We now  only need to check that they are indeed independent in $k\in \llbracket 0, K+a\rrbracket$. But to do so, it is enough to use the fact that the $(\mathscr{T}_k)_{k\in \T_K}$ and the $(\mathscr{S}_k)_{k\in \llbracket 0, K+a\rrbracket}$ are i.i.d. clocks, together with the observation that on $E_A(t)$, by construction, one cannot have both 
\eq{eq:activevoisins}{\sigma^*_k(t)+\sigma^*_{k+1}(t)\geq 1\qquad \mbox{ and }\qquad \sigma^*_{k+K}(t)+\sigma^*_{k+K+1}(t)\geq 1.}
The last statement is true because of Lemma \ref{lem:pas_distant_central} and because of the suppression mechanisms in $A$ and $\tilde{A}$, since \eqref{eq:activevoisins} means in particular that two live particles in $\sigma^*$ have before time $t$ been at distance less than $1$ in $\hat\xi$, but at a distance greater than $K-1$ in one of the auxiliary processes.

\medskip

We can therefore define the three SSEPs in contact with reservoirs $\tilde{\sigma}(\cdot),\; \tilde{\sigma}^{\textsc{r}}(\cdot), \tilde{\sigma}^{\textsc{l}}(\cdot)$, following respectively the three sets of clocks defined by \eqref{eq:Poisson2}, and started from the initial configurations \eqref{eq:InitConfigcouplingTheta}. We then denote by $\Theta$ the joint distribution of the $(\mathscr{T}_k)_{k\in \T_K}$ and the $(\mathscr{S}_k)_{k\in \llbracket 0, K+a\rrbracket}$. Note that in the three processes $\tilde{\sigma}(\cdot),\; \tilde{\sigma}^{\textsc{r}}(\cdot), \tilde{\sigma}^{\textsc{l}}(\cdot)$, particles can only get killed once they reach the reservoirs. In particular, on the event $E_A(t)$, live particles in $\sigma(\cdot)$ and $\tilde{\sigma}(\cdot)$ have exactly the same behaviour up to time $t$, so that the upper bounds \eqref{eq:boundEAt} follows straightforwardly, and the only discrepancies are due to particles that have been killed in $\sigma^*(\cdot)$ but not in $\tilde{\sigma}^*(\cdot)$.
\end{proof}

We are now able to derive the wanted control on the SWT, started from a configuration with $S(\xi)\geq 0$ excess particles.
\begin{corollary}
\label{cor:survivalSWT}
For any SWT configuration $\xi\in \Gamma_K$, for any segment $A\subset \T_K$, denoting by $a:=|A|$, we have for any $t>0$
\eq{eq:survival3}{{\bf P}_\xi^K (E_A(t))\leq  3\Q_{\un}^{K+a+1}\big(|\sigma(t)|> S(\xi)/3\big),}
where as before $\Q_{\un}^{K'} $ is the distribution of a SSEP on $\llbracket 1,K'-1\rrbracket$ in contact at the boundaries with empty reservoirs, and started from the full configuration $\un\equiv 1$.

\medskip

In particular there exists a constant $C$ such that, for $K$ large enough, for any $\varepsilon>0$, 
\eq{eq:survival4}{{\bf P}_\xi^K (E_A(t))\leq  \varepsilon, \qquad \forall t\geq \frac{(K+a)^2}{\pi^2}\log \frac{K}{S(\xi) \vee 1} + CK^2 \pa{1 + \log\frac{3}{\varepsilon} },}
\end{corollary}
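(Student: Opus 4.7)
The plan is to combine Corollary \ref{cor:survival}, Proposition \ref{prop:coupling_central}, and the conservation of $S(\xi)$ via a pigeonhole argument, and then to derive the quantitative estimate from Markov's inequality together with the exponential decay \eqref{lem:exp2} applied past the reference time $\tau_{K+a+1, S(\xi) \vee 1}^\star$.

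First I would use the fact that the number of live particles $\sum_p \delta_p(t) = |\xi^+(t)| = S(\xi) + |\xi^-(t)|$, so that on the event $E_A(t)$ (where at least one trap remains) we have $\sum_p \delta_p(t) \ge S(\xi) + 1 > S(\xi)$. Summing the pointwise bound of Corollary \ref{cor:survival} over all labels then yields
\[
|\sigma(t)| + |\sigma^{\textsc{r}}(t)| + |\sigma^{\textsc{l}}(t)| \;\ge\; \sum_p \delta_p(t) \;>\; S(\xi) \qquad \text{on } E_A(t),
\]
and by the pigeonhole principle at least one of the three terms on the left must strictly exceed $S(\xi)/3$. A union bound therefore gives
\[
{\bf 1}_{E_A(t)} \;\le\; {\bf 1}_{E_A(t),\, |\sigma(t)| > S(\xi)/3} + {\bf 1}_{E_A(t),\, |\sigma^{\textsc{r}}(t)| > S(\xi)/3} + {\bf 1}_{E_A(t),\, |\sigma^{\textsc{l}}(t)| > S(\xi)/3}.
\]

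Next I would invoke the coupling $\Theta$ of Proposition \ref{prop:coupling_central}: on $E_A(t)$, $\sigma(s) \le \tilde \sigma(s)$ (and similarly for the two other pairs) for all $s \le t$, where $\tilde\sigma, \tilde\sigma^{\textsc{r}}, \tilde\sigma^{\textsc{l}}$ are SSEPs with empty reservoirs on $\llbracket 1, K+a\rrbracket$ started from $\sigma(0), \sigma^{\textsc{r}}(0), \sigma^{\textsc{l}}(0) \le \un$ respectively. Since the SSEP with empty reservoirs is attractive, each $\tilde\sigma^*$ is stochastically dominated by the same process started from the full configuration $\un$. Taking expectations on each of the three terms in the union bound and applying these stochastic dominations yields directly \eqref{eq:survival3}.

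Finally, for the quantitative estimate \eqref{eq:survival4}, I would proceed as in the proof of Lemma \ref{lem:unique_well}. Markov's inequality gives
\[
\Q_\un^{K+a+1}\big(|\sigma(t)| > S(\xi)/3\big) \;\le\; \frac{3\,\E_\un^{K+a+1}[|\sigma(t)|]}{S(\xi)\vee 1}.
\]
Setting $t_0 := \tau_{K+a+1,\, S(\xi)\vee 1}^\star$, by definition $\E_\un^{K+a+1}[|\sigma(t_0)|] \le S(\xi) \vee 1$, and using the Markov property at $t_0$ together with the exponential decay \eqref{lem:exp2}, for any $m\in\N$ we have $\E_\un^{K+a+1}[|\sigma(t_0 + 2(K+a)^2 m)|] \le e^{-cm}(S(\xi)\vee 1)$. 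Choosing $m = \lceil c^{-1}\log(9/\varepsilon)\rceil$ makes the right-hand side of \eqref{eq:survival3} at most $\varepsilon$. It then remains to convert $t_0$ into the explicit form of \eqref{eq:survival4}: by the analogue \eqref{eq:tauKS2} of \cite[Section~5]{tran_cutoff_2022}, $t_0 = \frac{(K+a+1)^2}{\pi^2}\log\frac{K+a+1}{S(\xi)\vee 1} + \mathcal{O}((K+a)^2)$, and the replacement $(K+a+1)^2\log(K+a+1) \to (K+a)^2 \log K$ costs only an additional $\mathcal{O}(K^2)$ since $a \le K$. Absorbing all the $\mathcal{O}(K^2)$ errors into the constant $C$ yields \eqref{eq:survival4}. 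The only mildly delicate point is the book-keeping on the leading-order constant when passing from $\tau_{K+a+1,s}^\star$ to the form displayed in \eqref{eq:survival4}, but all estimates involved are already available from the earlier lemmas.
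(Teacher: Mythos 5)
Your proposal is correct and follows essentially the same route as the paper: Corollary \ref{cor:survival} plus the pigeonhole/union bound and the coupling of Proposition \ref{prop:coupling_central} with SSEP attractiveness give \eqref{eq:survival3}, and then Markov's inequality with exponential decay past the reference time $\tau^{\star}_{K+a+1,\,S(\xi)\vee 1}$ together with \eqref{eq:tauKS2} gives \eqref{eq:survival4}. The only cosmetic difference is that you treat $S(\xi)=0$ and $S(\xi)>0$ uniformly via the $S(\xi)\vee 1$ denominator, whereas the paper handles $S(\xi)=0$ by citing the upper bound of Lemma \ref{lem:unique_well} directly, which is the same computation.
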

\begin{proof}
We use the labelled SWT construction, and denote by $n:=|\xi^+|$ the initial number of particles. For any set $A$, if $\xi$ has $S(\xi)$ excess particles, then $E_A(t)$ implies in particular that $|\xi^+(t)|> S(\xi)$, because at least one trap has not been filled. But by construction, $|\xi^+(t)|=\sum_{p=1}^n \delta_p(t)$, so that by Corollary \ref{cor:survival}, 
\begin{align*}E_A(t)\qquad &\Longrightarrow  \qquad S(\xi)<|\xi^+(t)|\leq  |\sigma(t)|+|\sigma^{\textsc{r}}(t)|+|\sigma^{\textsc{l}}(t)|\\
 &\Longrightarrow  \qquad \max(|\sigma(t)|,|\sigma^{\textsc{r}}(t)|,|\sigma^{\textsc{l}}(t)|)>S(\xi)/3.\end{align*}
We now use the coupling of these three processes with the SSEPs with empty reservoirs laid out in Proposition \ref{prop:coupling_central}, together with a union bound, to finally obtain that
\eq{eq:survival5}{{\bf P}_\xi^K (E_A(t))\leq \sum_{*\in\{\;\,,\textsc{r},\textsc{l}\}}\Q_{\sigma^*(0)}^{K+a+1}\big(|\tilde{\sigma}^*(t)|> S(\xi)/3\big),}
where the initial configurations were defined by \eqref{eq:unfold}, \eqref{eq:unfold2}, \eqref{eq:unfold3} and \eqref{eq:CPsigma}. The SSEP's attractiveness then allows to upper bound by the full initial configuration which proves \eqref{eq:survival3}.

\medskip

Equation \eqref{eq:survival4} for $S(\xi) = 0$ (the critical case) is a direct consequence of \eqref{eq:survival3} and the upper bound in Lemma \ref{lem:unique_well}, applied to a SSEP with empty reservoirs on $\llbracket 1, K+a \rrbracket$. 

\medskip

For $S(\xi) > 0$, the arguments are the same as in the proof of the upper bound in Lemma \ref{lem:unique_well}, but we choose as a reference time $\tau_{K,s}^{\star}$, defined in \eqref{eq:tauKS}, instead of $\tau_{K}^{\star}$. Then the estimate \eqref{eq:tauKS2} yields \eqref{eq:survival4} for $S(\xi)=s > 0$, which completes the proof. 
\end{proof}

\section{Proof of Theorem \ref{thm:freezetime_cutoff_crit}}
\label{sec:proofthm2.2}
\subsection{Crude upper bound for the FEP's transience time}

We now wish to convert our transience time cutoff result on the SWT to the FEP. The critical case for the FEP, meaning estimating $\theta^{\FEP}_{\star,N}(\varepsilon)$ (defined analogously to the one for the SWT) can be done straightforwardly using the critical case of Theorem \ref{thm:cutoff_SWT}. However, in Theorem \ref{thm:freezetime_cutoff_crit} we are dealing with the maximal transience time $\theta^{\FEP}_{N}(\varepsilon)$, rather than the critical one. The problem is therefore that we are looking at a fixed \emph{ring size} $N$ estimate for the FEP, while our results on the SWT concern a fixed size of the ring for the SSEP with traps $K$, which corresponds to a \emph{fixed number of particles for the FEP}, rather than a fixed ring size. 

\medskip

More precisely, given a FEP configuration $\eta$ on $\T_N$, the mapped SWT has at most $N$ particles, so that 
 \begin{equation} \label{eq:encadrement_pas_sharp0}
 \theta^{\FEP}_N(\varepsilon) \le \max_{K\in \llbracket 1,N \rrbracket} \theta^{\SWT}_K(\varepsilon)=\max_{K\in \llbracket 1, N \rrbracket}\theta^{\SWT}_{\star,K}(\varepsilon)\leq \Theta_{N}(\varepsilon),
\end{equation}
where the last upper bound was defined in Lemma \ref{lem:t_tran_critique}.
\medskip

On the other hand, the $\varepsilon$-transience time needs in particular to encompass critical configurations, for which $K=N/2$, so that for even $N$ (otherwise there is no critical state)
\begin{equation} \label{eq:encadrement_pas_sharp00}
\theta^{\FEP}_N(\varepsilon) \ge \theta^{\FEP}_{\star,N}(\varepsilon)=\theta^{\SWT}_{\star,N/2}(\varepsilon).
\end{equation}
For odd $N$, the bound must hold for any configuration with $\lfloor N/2\rfloor$ particles, so that for any $N$, even or odd, we have
\begin{equation} \label{eq:encadrement_pas_sharp000}
\theta^{\FEP}_N(\varepsilon) \ge \theta^{\SWT}_{\star,\lfloor N/2\rfloor}(\varepsilon)\geq \vartheta_{\lfloor N/2\rfloor}(\varepsilon)\end{equation}
once again with the same notations as in Lemma \ref{lem:t_tran_critique}. Given Lemma \ref{lem:t_tran_critique}, \eqref{eq:encadrement_pas_sharp0} and \eqref{eq:encadrement_pas_sharp000} yield
\begin{equation} \label{eq:encadrement_pas_sharp}
t_{N/2}^\star - CN^2\pa{1+\log \frac{1}{1-\varepsilon}} \le \theta^{\FEP}_N(\varepsilon) \le t_{N}^\star + CN^2\left(1 + \log \frac{\log N}{\varepsilon}\right)
\end{equation}
for some positive constant $C$.

\medskip

Because the dominant terms $t_{N/2}^\star$, $t_N^\star$ on both sides are not the same, this bound is not enough to prove cutoff, so that we need to sharpen our upper bound to match the lower bound at the dominant order. Unfortunately, the SWT does not allow us to do so, because the mapping itself does not lend itself well to the supercritical case; in the supercritical regime, adding particles in the FEP should \emph{help} reaching the transient component faster, however adding particles in $\eta$ increases the SWT's ring size, thus making our transience estimates worse. For this reason, to get a sharp supercritical bound for the FEP's transience time, we turn to a second mapping in which adding particles decreases the ring's size. 

\subsection{Mapping to the facilitated zero-range process}

Given an integer $P\geq 1$,  we define the Facilitated Zero-Range process (FZR) on $\T_P$, which is a zero-range process with rate function $\noteblue{g(k):=\un_{\{k\geq 2\}}}$, meaning a Markov process on $\noteblue{\Omega_P:=\N^{\T_P}}$, with generator given by 
\eq{eq:geneomega}
{
{\mathscr{L}}^{\FZR}_P f (\omega)=\sum_{y\in \T_P}{\bf 1}_{\{\omega_y\geq 2\}}\big\{f(\omega^{y,y-1})+f(\omega^{y,y+1})-2f(\omega)\big\},
}
where 
\eqs{
\omega^{y,y+z}_{y'}=\begin{cases}\omega_y-1 & \mbox{ if }y'=y\\
\omega_{y+z}+1 & \mbox{ if }y'=y+z\\
\omega_{y'}& \mbox{ if }k'\neq y,y+z.
\end{cases}
}
The mapping from exclusion process to zero-range processes is very classical, and has been used repeatedly on the FEP in particular, to derive its macroscopic properties   \cite{blondel_hydrodynamic_2020,blondel_stefan_2021,EZ23}. For this reason, we simply sketch it, and refer the reader to \cite[Section 3]{EZ23} for details. To define the mapping, consider a FEP $\eta(\cdot)$, define $P=N-K$ its number of empty sites, and arbitrarily enumerate their successive positions 
\eqs{X_1(0)<X_2(0)<\dots<X_P(0).}
We follow the trajectories $X_1(t)<X_2(t)<\dots<X_P(t)$ of the empty sites as the FEP runs its course, and define 
\eq{eq:omegamapping}{\omega_y(t)=X_{y+1}(t)-X_y(t)-1\in\N}
as the number of particles between the $y$-th and $(y+1)$-th empty sites in $\eta(t)$. Similarly to the SWT mapping, we define $\Phi(\eta)=\omega$ (resp. $\Phi^\star[\eta])=\omega(\cdot)$) the statical mapping of the initial configuration (resp. the dynamical mapping of the trajectory). It is then straightforward to see (cf. \cite[Section 3.2]{EZ23}) that if $\eta(\cdot)$ is a FEP, then $\Phi^\star[\eta]=\omega(\cdot)$ defined through \eqref{eq:omegamapping} is a facilitated zero-range process started from $\noteblue{\Phi(\eta):=\Phi^\star[\eta](0)}$ and driven by the generator  \eqref{eq:geneomega}.

\medskip

Since the jump rate $g(\cdot)$ for this zero-range process is non-decreasing, it is also attractive under a similar basic coupling as the one introduced for the SWT in Section \ref{subsec:att_swt}. Note that the FZR is degenerate, in the sense that isolated particles cannot jump. For this reason, the FZR also exhibits frozen ($\omega\leq 1$), ergodic ($\omega \geq 1$) and transient (non-ergodic and non-frozen) configurations. We denote by $\noteblue{{\bf P}_\omega^P}$ the distribution of a FZR started from $\omega\in \Omega_P$ and driven by the generator ${\mathscr{L}}^{\FZR}_P$.

\subsection{Transience time of the supercritical FEP} 
\label{subsec:super_crit_FEP}
With this second mapping, we are now equipped to get a sharp estimate on the supercritical transience time for the FEP. Let $\eta$ a supercritical FEP configuration on $\T_N$. Then $\eta$ has $P\leq N/2$ empty sites, and $\omega:=\Phi(\eta)$ is a FZR configuration on $\T_P$. Furthermore, by the dynamical mapping, for any $\omega\in \Omega_P$ and any $\eta$ such that $\Phi(\eta)=\omega$ we have 
\begin{equation}
   p_\omega(t):={\bf P}_\omega^P(\omega(t)\mbox{ is transient})=p_\eta(t).
\end{equation}
Since $\omega$ is supercritical, by removing the right number of particles in $\omega$, there exists a critical FZR (with exactly $P$ particles) $\omega^\star\leq \omega$ on $\T_P$. By attractiveness, using the basic coupling between the two FZR started from $\omega, \omega^\star$, we see that if $\omega^\star(t)$ is ergodic then $\omega(t)$ is too, hence 
 \eq{eq:borntrans1}{p_\eta(t)=p_\omega(t) \le p_{\omega^\star}(t).}
Note that any $\eta^\star\in \Phi^{-1}(\omega^\star)$ is a critical FEP configuration on $\T_{2P}$, with $P \le N/2$, so that $\xi^\star := \Pi(\eta^\star)$ is a critical SWT configuration on $\T_P$. This, together with \eqref{eq:borntrans1}, yields
\eq{eq:inegmappings}{p_\eta(t)\leq    p_{\omega^\star}(t)=p_{\eta^\star}(t) = p_{\xi^\star}(t) \le p_{\star, P}^{\SWT}(t),}
where the right-hand side is the maximal critical transience probability for the SWT defined in \eqref{eq:critprobSWT}. Since the upper bound from Lemma \ref{lem:t_tran_critique} is increasing with $K$, for $t \ge \Theta_{\ent{N/2}}(\varepsilon)$ 
, then, we also have  
\eqs{t \ge \Theta_P(\varepsilon),}
so that according to Lemma \ref{lem:t_tran_critique} $p_{\star, P}^{\SWT}(t) \le \varepsilon$. In particular, for such a $t$, $p_\eta(t) \leq \varepsilon$ for all supercritical FEP configurations $\eta$. 

\medskip

For subcritical configurations $\eta$ with $K\leq N/2$ particles, we can use directly the SWT mapping, and write
\eq{eq:borntrans2}{p_\eta(t)=p_\xi(t) \le p_{\xi^\star}(t) \le p_{\star, K}^{\SWT}(t).}
where $\xi:=\Pi(\eta)$, and $\xi^\star \geq\xi$ is is a critical configuration obtained by removing trap depth in $\xi$ until it becomes critical. Reproducing the same arguments as the ones following \eqref{eq:inegmappings}, since once again $ K\leq N/2$, we obtain that for any $t \ge \Theta_{N/2}(\varepsilon)$, and any subcritical configuration $\eta$, $p_\eta(t) \leq \varepsilon$. This finally yields
\begin{equation} \label{eq:encadrement_pas_sharp2}
\theta^{\FEP}_N(\varepsilon) \le \Theta_{N/2}(\varepsilon).
\end{equation}
This bound now matches at the dominant order the lower bound already obtained in \eqref{eq:encadrement_pas_sharp}, and proves Theorem \ref{thm:freezetime_cutoff_crit}.

\appendix

\section{Exit time for random walks} \label{app:proof_RW}

We present here two classical results on the exit time of compact sets for symmetric random walks. Together, Lemma \ref{lem:RW1} and Corollary \ref{cor:1} prove \eqref{eq:explorationtime}. We follow the proof suggested by \cite[Chapter 5]{varadhan_probability_2001}. 

\begin{lemma} \label{lem:RW1}
Let $K \in \N^*$  and  $(S_i)_{i\ge 1}$ be a discrete time, simple random walk on $\Z$ starting from $x$ with $|x|<K$. Define $\tau_K = \inf\{i \ge 0 : |S_i| \ge K \}$. Then :
$\forall n \in \N$, $\forall 0 <\lambda < \frac{\pi}{2K}$,
\eqs{\PP_x(\tau_K>n) \le \frac{\cos(\lambda)^n}{\cos\lambda K}.}
\end{lemma}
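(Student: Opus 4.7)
The natural approach is to exhibit an exponential martingale tailored to the cosine eigenfunction of the discrete Laplacian, and then apply the optional stopping theorem at $\tau_K \wedge n$.

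\textbf{Key steps.} First, I would observe that for a simple symmetric random walk and any $\lambda \in \R$, the function $f(y) = \cos(\lambda y)$ satisfies
\[
\tfrac{1}{2}\bigl(\cos(\lambda(y+1)) + \cos(\lambda(y-1))\bigr) = \cos(\lambda)\cos(\lambda y),
\]
by the product-to-sum trigonometric identity. Consequently, the process
\[
M_i := \frac{\cos(\lambda S_i)}{\cos(\lambda)^i}, \qquad i \geq 0,
\]
is a martingale under $\PP_x$ whenever $\cos(\lambda) \neq 0$, which holds since $0 < \lambda < \pi/(2K) \leq \pi/2$.

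Second, I would apply the optional stopping theorem at the bounded stopping time $\tau_K \wedge n$ to obtain $\E_x[M_{\tau_K \wedge n}] = M_0 = \cos(\lambda x)$. The useful observation here is that on $\{\tau_K > n\}$ we have $\tau_K \wedge n = n$ and $|S_n| < K$, so $|\lambda S_n| < \lambda K < \pi/2$ and $\cos(\lambda S_n) \geq \cos(\lambda K) > 0$ by monotonicity of $\cos$ on $[0,\pi/2]$. This lets me lower-bound the martingale expectation by restricting to this event:
\[
\cos(\lambda x) = \E_x[M_{\tau_K \wedge n}] \geq \E_x\bigl[M_n \un_{\{\tau_K > n\}}\bigr] \geq \frac{\cos(\lambda K)}{\cos(\lambda)^n}\, \PP_x(\tau_K > n).
\]
Here I use that all terms contributing to $M_{\tau_K \wedge n}$ are nonnegative (since $|S_{\tau_K \wedge n}| \leq K$ because the walk has unit steps and $|x| < K$, so $\cos(\lambda S_{\tau_K \wedge n}) \geq \cos(\lambda K) \geq 0$), so dropping the complementary event only decreases the expectation.

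\textbf{Conclusion.} Rearranging and using $\cos(\lambda x) \leq 1$ yields
\[
\PP_x(\tau_K > n) \leq \frac{\cos(\lambda x)\, \cos(\lambda)^n}{\cos(\lambda K)} \leq \frac{\cos(\lambda)^n}{\cos(\lambda K)},
\]
as desired. There is no real obstacle here: the only subtle point is checking nonnegativity of $\cos(\lambda S_{\tau_K \wedge n})$ so that restricting the expectation to $\{\tau_K > n\}$ produces a valid lower bound, and this is guaranteed by the assumption $\lambda < \pi/(2K)$ together with the fact that a unit-step walk cannot overshoot the interval $[-K, K]$ by more than one step.
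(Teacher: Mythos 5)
Your proof is correct and follows essentially the same route as the paper: the same cosine martingale $\cos(\lambda S_i)/\cos(\lambda)^i$ and optional stopping at the bounded time $\tau_K\wedge n$. The only (harmless) difference is at the end, where you restrict the stopped expectation directly to $\{\tau_K>n\}$, whereas the paper first bounds $\E_x[(\cos\lambda)^{-\tau_K\wedge n}]$, passes to $\tau_K$ by monotone convergence, and concludes by Markov's inequality.
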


\begin{proof}

Let $0< \lambda < \frac{\pi}{2K}$ : $\E_x[\cos(\lambda S_1)] = \frac12 (\cos(\lambda(x+1)) + \cos(\lambda(x-1))) = \cos \lambda \cos (\lambda x)$.

Then $Z_n = \frac{1}{(\cos \lambda)^n} \cos(\lambda S_n)$ is a martingale. Note that for all $x \in [-K,K]$, we have that $\cos(\lambda x) \ge \cos (\lambda K)$. The stopping theorem between $0$ and $\tau_K \wedge n$ gives:
\eqs{\cos (\lambda x) = \E_x\left[\frac{1}{(\cos \lambda)^{\tau_K\wedge n}} \cos(\lambda S_{\tau_K \wedge n})\right] \ge \cos (\lambda K ) \E_x \left[\frac{1}{(\cos \lambda)^{\tau_K \wedge n}}\right] .}

Since $\cos (\lambda K) > 0$, $ \E_x \left[\frac{1}{(\cos \lambda)^{\tau_K \wedge n}}\right] \le \frac{\cos(\lambda x)}{\cos (\lambda K)}$.

By monotone convergence theorem, $\E_x\left[\frac{1}{(\cos \lambda)^{\tau_K}}\right]\le \frac{\cos(\lambda x)}{\cos (\lambda N)}$.

So by Markov inequality, \eqs{\PP_x(\tau_K > n) \le \frac{\cos (\lambda x)}{\cos (\lambda K)} \cos(\lambda)^n N.}
\end{proof}

\begin{corollary} \label{cor:1}
Let $(Y_t)_{t\ge 0}$ a continuous time simple random walk on $\Z$ starting from $x$ with $|x|<K$: its jumps occur at rate 1, and independently the jump is $\pm1$ with probability $\frac12$. For $K \ge 1$, let $T_K = \inf\{t \ge 0 : |Y_t| \ge K \}$. Then there exists $C >0$ such that for all $K$, for all $s \ge 0$,
\begin{equation}
\PP_x(T_K > s) \le C e^{-\frac{s}{K^2}}.
\end{equation}
\end{corollary}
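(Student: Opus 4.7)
The natural strategy is to reduce to the discrete-time bound of Lemma \ref{lem:RW1} via the standard Poisson representation of the continuous-time walk. Concretely, I would write $Y_t = S_{N_t}$, where $(S_i)_{i \geq 0}$ is the discrete-time simple random walk starting from $x$ appearing in Lemma \ref{lem:RW1} and $(N_t)_{t \geq 0}$ is an independent Poisson process of rate $1$, so that $T_K > s$ if and only if $\tau_K > N_s$.

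Conditioning on $N_s$ and applying Lemma \ref{lem:RW1} with some $\lambda \in (0, \pi/(2K))$ yields
\eqs{\PP_x(T_K > s) = \E\big[\PP_x(\tau_K > N_s \mid N_s)\big] \le \frac{1}{\cos(\lambda K)} \, \E\big[\cos(\lambda)^{N_s}\big].}
Using the probability generating function of a rate-$1$ Poisson variable, $\E[z^{N_s}] = e^{s(z-1)}$, this gives the clean estimate
\eqs{\PP_x(T_K > s) \le \frac{1}{\cos(\lambda K)} \exp\big(-s\,(1-\cos\lambda)\big).}

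It then remains to optimize (or simply make a workable choice of) $\lambda$. The natural choice is $\lambda = \pi/(4K)$, which keeps $\cos(\lambda K) = \cos(\pi/4) = \sqrt{2}/2$ bounded away from $0$ uniformly in $K$, and which makes $1 - \cos\lambda$ of order $1/K^2$. The one elementary quantitative step to carry out is a lower bound of the form $1 - \cos(\pi/(4K)) \geq c/K^2$ for an explicit constant $c > 0$ and all $K \geq 1$; this follows from the identity $1 - \cos\theta = 2\sin^2(\theta/2)$ together with Jordan's inequality $\sin x \geq 2x/\pi$ on $[0,\pi/2]$, giving $1 - \cos\theta \geq 2\theta^2/\pi^2$. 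Plugging this in yields $\PP_x(T_K > s) \leq \sqrt{2}\, e^{-cs/K^2}$, which is the claimed bound up to absorbing constants.

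The proof is almost entirely mechanical once the Poisson representation is invoked; there is no real obstacle. The only point that deserves care is making sure the chosen $\lambda$ works uniformly in $K \geq 1$ (including very small $K$) so that both $\cos(\lambda K)$ stays bounded away from $0$ and $1 - \cos\lambda$ admits the uniform lower bound $c/K^2$. The choice $\lambda = \pi/(4K)$ handles both requirements simultaneously.
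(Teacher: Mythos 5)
Your strategy is exactly the paper's: Poissonize ($Y_t=S_{N_t}$), condition on $N_s$, apply Lemma \ref{lem:RW1}, and use $\E[z^{N_s}]=e^{s(z-1)}$ to arrive at $\PP_x(T_K>s)\le \frac{1}{\cos(\lambda K)}e^{-s(1-\cos\lambda)}$. Up to this point everything is fine and matches the paper's argument line by line.

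The gap is in the final choice of $\lambda$ and in the claim that the result follows ``up to absorbing constants''. With $\lambda=\pi/(4K)$ and Jordan's inequality $1-\cos\theta\ge 2\theta^2/\pi^2$ you only get $1-\cos\lambda\ge \frac{1}{8K^2}$, hence $\PP_x(T_K>s)\le \sqrt 2\, e^{-s/(8K^2)}$. This is strictly weaker than the stated bound $Ce^{-s/K^2}$: a multiplicative constant sitting \emph{inside the exponent} cannot be absorbed into the prefactor $C$, since $e^{-s/(8K^2)}\gg e^{-s/K^2}$ as $s\to\infty$. Moreover Jordan's inequality can never rescue this choice of route: even taking $\lambda$ arbitrarily close to the threshold $\pi/(2K)$, it only yields $1-\cos\lambda\ge \frac{1}{2K^2}$. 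To actually reach the exponent $s/K^2$ you must take $\lambda$ close to $\pi/(2K)$ \emph{and} use a sharper lower bound on $1-\cos\lambda$; this is what the paper does, choosing $\lambda=\sqrt{24/11}/K$ together with $\cos\lambda\le 1-\frac{11}{24}\lambda^2$ (valid for $\lambda\le 1$), which gives $1-\cos\lambda\ge 1/K^2$ while $\cos(\lambda K)=\cos\sqrt{24/11}$ remains a fixed positive constant independent of $K$. So either adjust your choice of $\lambda$ and the cosine estimate along these lines, or note that your argument only proves the weaker bound $Ce^{-cs/K^2}$ for some $c>0$ (which, incidentally, would still suffice for the application in \eqref{eq:explorationtime}, but is not the statement of the corollary).
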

\begin{proof}
Let $s\ge 0$, $(\mathcal{N}(t))_{t\ge 0}$ a Poisson process of intensity 1, and $(S_n)_{n \ge 0}$ a simple symmetric random walk, starting from $x$, independent of $\mathcal{N}$, so that $Y_t = S_{N(t)}$ for all $t$. Let $\lambda < \frac{\pi}{2K}$. 
Then,
\begin{align}
\PP_x(T_K > s) &= \sum_{n \ge 0} \PP(\mathcal{N}(s) = n) \PP_x(\tau_K > n) \nonumber \\
&\le \sum_{n\ge 0} e^{-s}\frac{s^n}{n!} \frac{(\cos \lambda)^n}{\cos(\lambda K)} \nonumber \\
&= \frac{1}{\cos(\lambda K)} e^{-s(1-\cos \lambda)}.
\end{align}

If $\lambda \le 1$ we have $\cos \lambda \le 1-\frac{\lambda^2}{2} + \frac{\lambda^4}{24} \le 1 - \frac{11}{24}\lambda^2$.

By taking $\lambda = \sqrt{\frac{24}{11}} \frac{1}{K}$ and setting $C = \frac{1}{\cos \sqrt{24/11}}$, we then obtain 
\eqs{\PP_x(T_K > s) \le C e^{-\frac{s}{K^2}} .}
\end{proof}

\section{Upper bound for the mixing time of the SSEP}
\label{app:tmix}
Recall that $\pi_{K,s} \sim \mathcal{U}(\{\sigma \in \{0,1\}^{\T_K} : |\sigma| = s\})$, and that $\tau^{\textsc{ssep}}_{K,s}(\varepsilon)$ is the $\varepsilon$-mixing time of the SSEP on $\T_K$ with $s$ particles. We will show that
\begin{proposition}[Upper bound on the mixing time of the SSEP]
\eqs{\tau_{K,s}^{\textsc{ssep}}(\varepsilon) \le \frac{K^2}{2\pi^2} \left(\log \frac{s}{\varepsilon} + \log 4 /\pi +  o(1) \right).}
\end{proposition}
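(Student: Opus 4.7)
The plan is to work in the $\chi^2$-framework. I would start from the standard reduction
\begin{equation*}
2\,d_{\textsc{tv}}\pa{\bP_\sigma^K(\sigma(t)\in\cdot),\pi_{K,s}} \le \sqrt{\chi^2\pa{\bP_\sigma^K(\sigma(t)\in\cdot),\pi_{K,s}}}
\end{equation*}
and then exploit the explicit spectral decomposition of the SSEP generator on the cycle. By Caputo--Liggett--Richthammer (Aldous' conjecture), the spectral gap of the SSEP on $\T_K$ equals that of a single symmetric random walk on $\T_K$, with relaxation time of order $K^2/(2\pi^2)$; this is exactly the prefactor in the target bound. More completely, the eigenvalues of the generator restricted to the $s$-particle subspace are sums $\lambda_I = \sum_{k \in I} \lambda_k$ of single-particle Fourier eigenvalues $\lambda_k$ with $\lambda_k\sim 2\pi^2 k^2/K^2$, indexed by subsets $I\subset\{1,\dots,K-1\}$ with $\abs{I}\le s$.

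Expanding the $\chi^2$ in an orthonormal eigenbasis yields a sum of the form $\sum_{I\ne\emptyset}\abs{c_I(\sigma)}^2 e^{-2t\lambda_I}$, where the $c_I(\sigma)$ are the coefficients of the initial density relative to $\pi_{K,s}$. I would control the total $L^2$-mass carried by each eigenspace of cardinality $\abs{I}=j$ by a combinatorial quantity in $K,s,j$, and observe that at the time scales of interest the dominant contribution comes from the $j=1$ eigenspaces, which reduces the bound to the Gaussian-type sum $\sum_{k=1}^{K-1}e^{-2t\lambda_k}$.

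This last sum is a Jacobi theta object, which I would evaluate by Poisson summation: it behaves like $K/(2\sqrt{\pi t})$ in the diffusive regime $t\lesssim K^2$, and like the simple geometric $\sim e^{-2\lambda_1 t}$ in the tail regime $t\gg K^2$. Solving the inequality $\chi^2\le 4\varepsilon^2$ for $t$ produces the claimed bound: the logarithmic factor $\log(s/\varepsilon)$ stems from inverting the exponential decay on the $j=1$ eigenspace, while the precise additive constant $\log(4/\pi)$ comes from the $1/\sqrt{\pi}$ Gaussian prefactor appearing in the theta asymptotics.

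The main obstacle will be the uniform control of the coefficients $\abs{c_I(\sigma)}^2$, since the eigenfunctions of the $s$-particle SSEP on $\T_K$ are not plain tensor products of single-particle Fourier modes, making a direct Parseval-type bookkeeping delicate. A clean workaround, consistent with the remark preceding the appendix, is to upper-bound the SSEP's $\chi^2$ by that of $s$ independent random walks on $\T_K$, for which the spectral decomposition factorises completely and the theta calculation above becomes transparent. This transfer can be justified either via an explicit coupling (e.g.\ through the interchange process / basic coupling) or via the negative-dependence property of the SSEP recalled in \cite{salez_universality_2022,tran_cutoff_2022}.
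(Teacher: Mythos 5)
Your plan has a genuine gap at its central step: the transfer of the $\chi^2$ bound from the SSEP to $s$ independent random walks is asserted but not available by either of the justifications you suggest. A coupling (interchange process / basic coupling) controls total variation, not $\chi^2$, and even at the TV level it does not compare the SSEP's distance to $\pi_{K,s}$ with the independent walks' distance to \emph{their} equilibrium, which is a different (product) measure on $\T_K^s$; closeness of $s$ independent walks to the product of uniforms says nothing directly about closeness of the exclusion configuration to the uniform measure on $s$-particle configurations. Negative dependence does not supply such a $\chi^2$ comparison either: its known consequences used in \cite{salez_universality_2022,tran_cutoff_2022} (variance bounds, reservoir settings) do not apply to the SSEP on the cycle in this way, and the paper itself recalls that the SSEP-versus-independent-walks comparison is, on general graphs, only Oliveira's \emph{conjecture}. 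Without that transfer, your direct spectral route faces exactly the obstacle you name: the worst-case deterministic initial datum has $\chi^2$ of order $\binom{K}{s}$ at time $0$, the eigenfunctions in the $s$-particle sector are not tensor products, and controlling the mass on the shells $\abs{I}=j\ge 2$ uniformly in $\sigma$ is essentially the content of Lacoin's sharp analysis, not a transparent theta-function computation. Your constant bookkeeping also signals this: the relaxation time of the SSEP on $\T_K$ is of order $K^2/4\pi^2$ (gap $\simeq 4\pi^2/K^2$ with the paper's rates), so a completed $\chi^2$ argument would produce a $\frac{K^2}{8\pi^2}\log s$-type bound (the sharp constant), whereas the stated $\frac{K^2}{2\pi^2}$ and the additive $\log(4/\pi)$ have a different origin.

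For comparison, the paper's proof stays entirely at the TV level: label the $s$ particles, couple the process with a \emph{stationary copy of the SSEP itself} through shared edge clocks once partner particles coincide (an interchange-process coupling), bound $d_{\textsc{tv}}$ by the probability that some pair has not met, and apply a union bound over the $s$ pairs. The meeting time of one pair is the absorption time of the clockwise-distance walk on $\llbracket 0,K\rrbracket$ at $\{0,K\}$, estimated by a one-dimensional Dirichlet eigenfunction expansion; this is where the prefactor $\frac{K^2}{2\pi^2}$ (Dirichlet gap of the rate-doubled difference walk) and the constant $\log(4/\pi)$ (the coefficient $\frac{2}{K\tan(\pi/2K)}$ of the principal mode) actually come from, and the $\log s$ is just the union bound. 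If you make your ``coupling justification'' precise you are led to this argument and the $\chi^2$ framework is abandoned; if you insist on the $\chi^2$ framework, you must either prove the SSEP-to-independent-walks comparison or control the full eigencoefficient structure, and neither is a routine step.
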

Consider a SSEP on $\T_K$ with $s$ particles, with initial configuration $\sigma$. 
Let $\zeta$ a uniform SSEP configuration, so that $\zeta \sim \pi_{K,s}$ (the invariant law of the SSEP on $\T_K$ with $s$ particles). Our aim is to couple $\zeta(\cdot)$ and $\sigma(\cdot)$, so that we label the particles of $\sigma$ and of $\zeta$ from 1 to $s$, respectively denoting by $X^1, ... , X^s$ and $Y^1, ... , Y^s$ the positions of particles labelled $1$ to $s$ in $\sigma$ and $\zeta$.

 \smallskip

We  equip the edges of $\T_K$ with rate 1 Poisson clocks $(\mathscr{T}_k)_{k \in \T_K}$, and make the labelled particles of $\sigma$ on each side of an edge swap positions when this edge rings. This labelled SSEP is in fact an interchange process, and removing the labels, we obtain a SSEP $\sigma(t)$. Notice also that the particle trajectories $(X^i_t)_{t\ge 0}$ are simple random walks.

 \smallskip

We now equip $\zeta$ with the following dynamics: consider a second, independent set of Poisson clocks $(\mathscr{S}_k)_{k \in \T_k}$. Consider an edge $(k,k+1)$: if both particle $i$ of $\zeta(t)$ and particle $i$ of $\sigma(t)$ are present on the same side of the edge (ie $Y_t^i = X_t^i \in \{k,k+1\}$), particles of $\zeta(t)$ jump across $(k,k+1)$ when $\mathscr{T}_k$ rings in order to keep the two particles coupled. Otherwise, particles of $\zeta(t)$ crosses $(k,k+1)$ when $\mathscr{S}_k$ rings.

It is straightforward to check that $(Y_t^1,..., Y_t^s)_{t \ge 0}$ is an interchange process and $\zeta(\cdot)$ is a SSEP started from $\zeta$, so that at all times $t$, $\zeta(t) \sim \pi_{K,s}$. Under this coupling, once particle $i$ of $\zeta(\cdot)$ meets particle  $i$ of $\sigma(\cdot)$, meaning after the first $t$ such that $X_t^i = Y_t^i$, the particles' trajectories are identical: $\forall t' \ge t, X_{t'}^i = Y_{t'}^i$. We denote by \noteblue{$\mathbb{Q}_{\sigma,\zeta}$} the law of this coupled labelled process. Then,

\eq{eq:dtv_ssep}{d_{TV}(\bP_\sigma^K(\sigma(t) \in \cdot), \pi_{K,s}) \le \mathbb{Q}_{\sigma,\zeta}(\exists i \le s, X_t^i \neq Y_t^i).}

To study the right hand side, we show this estimate on meeting times of random walks.
\begin{lemma} \label{lem:tau_rw}
Let $(X_t)_{t \ge 0}$, $(Y_t)_{t \ge 0}$ two independent random walks on $\T_K$, that jump symmetrically at rate 1. Let $\tau = \inf \{t \ge 0: X_t = Y_t\}$. Then for all $\varepsilon > 0$,
\eq{eq:lem_irw}{\forall t \ge \frac{K^2}{2 \pi^2} \left(\log \frac{1}{\varepsilon} + \log\frac{4}{\pi} + o(1)\right), \quad \PP(\tau > t) \le \varepsilon.}
\end{lemma}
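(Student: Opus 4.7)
My plan is to reduce the meeting time $\tau$ to a hitting time problem for a single random walk on $\T_K$, and then estimate that hitting time via the spectral decomposition of the absorbed generator.

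First, I would observe that $Z_t := X_t - Y_t \bmod K$ is itself a continuous-time symmetric random walk on $\T_K$, since $X$ and $Y$ are independent symmetric random walks, and that $\tau$ coincides with the hitting time of the origin by $Z$. The jump rate of $Z$ is twice that of each of $X$ and $Y$. The lemma thus reduces to a bound on $\PP_z(\tau > t)$ which is uniform in the initial difference $z \in \T_K$.

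Second, by the symmetry of the cycle, running $Z$ with absorption at $0$ is equivalent to running a symmetric random walk on the segment $\{0,1,\ldots,K\}$ with both endpoints absorbing: reaching $0$ from the left (resp. right) of the cycle corresponds, in this unfolded picture, to absorption at $0$ (resp. $K$). The generator of this absorbed walk admits the explicit eigenbasis $\phi_j(x) = \sin(\pi j x / K)$, $j = 1, \ldots, K-1$, with eigenvalues $\lambda_j$ proportional to $1 - \cos(\pi j/K)$, the proportionality constant being fixed by the jump rate of $Z$, so that $\lambda_1 \sim 2\pi^2/K^2$ as $K \to \infty$.

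Third, I would expand $\PP_z(\tau > t) = \sum_{x=1}^{K-1} \PP_z(Z_t = x)$ in the basis $\{\phi_j\}$. A direct computation shows that only odd harmonics contribute and yields
\begin{equation*}
\PP_z(\tau > t) \;=\; \sum_{j\text{ odd}} \frac{2}{K}\cot\!\left(\frac{\pi j}{2K}\right) \sin\!\left(\frac{\pi j z}{K}\right) e^{-\lambda_j t}.
\end{equation*}
Isolating the leading $j=1$ term gives the uniform bound $\PP_z(\tau > t) \le \tfrac{4}{\pi}(1+o(1)) e^{-\lambda_1 t}$, and solving $\tfrac{4}{\pi} e^{-\lambda_1 t} \le \varepsilon$ recovers \eqref{eq:lem_irw}.

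The main technical point is to verify that the higher-frequency contributions $j \ge 3$ are indeed absorbed into the $o(1)$ in the stated window. This follows from the elementary bounds $|\cot(\pi j/(2K))| \le 2K/(\pi j)$ and $\lambda_j - \lambda_1 \ge c\pi^2(j^2 - 1)/K^2$ for some explicit constant $c > 0$, so that the tail of the spectral series is dominated by a geometric series in $e^{-(\lambda_3 - \lambda_1)t}$, which is negligible as soon as $t$ is at least of order $K^2$.
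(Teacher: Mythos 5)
Your skeleton is the same as the paper's: reduce $\tau$ to the absorption time of the difference walk in $\llbracket 0,K\rrbracket$ with both endpoints absorbing, then diagonalise the Dirichlet Laplacian in the sine basis (the paper dresses this up as a one-particle SSEP with empty reservoirs and Markov's inequality on the expected particle number, but the eigenfunctions, eigenvalues, and your exact formula with only odd harmonics and coefficient $\frac{2}{K}\cot(\frac{\pi j}{2K})$ agree with its computation). The gap is in your final step, the claim that the $j\ge 3$ harmonics are ``absorbed into the $o(1)$'' because $t$ is of order $K^2$. At the relevant times the ratio $e^{-(\lambda_j-\lambda_1)t}$ is a constant in $K$, not $o(1)$: since $(\lambda_j-\lambda_1)/\lambda_1\to j^2-1$, at the threshold where $e^{-\lambda_1 t}\approx \pi\varepsilon/4$ the $j$-th term has relative size of order $\frac{1}{j}(\pi\varepsilon/4)^{j^2-1}$. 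Bounding $|\sin(\pi j z/K)|\le 1$ and summing therefore only yields $\PP_z(\tau>t)\le \frac{4}{\pi}\bigl(1+\delta(\varepsilon)\bigr)e^{-\lambda_1 t}$ with a $K$-independent $\delta(\varepsilon)>0$ (a few percent when $\varepsilon$ is of order one). This shifts the additive constant inside the parenthesis by $\log(1+\delta(\varepsilon))$, i.e.\ by a term of order $K^2$ in time, which the $o(1)$ (an error as $K\to\infty$ at fixed $\varepsilon$) cannot swallow. So, as written, your argument proves the lemma with a slightly larger constant than $\log\frac{4}{\pi}$, not the stated one.

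To recover the exact constant you must exploit the signs of the odd harmonics instead of the triangle inequality, and this is precisely the paper's key trick: it works at the starting point $z=K/2$, where $\sin(\pi j z/K)$ alternates ($+1$ for $j\equiv 1\pmod 4$, $-1$ for $j\equiv 3\pmod 4$), and uses the monotonicity $v_j\ge v_{j+2}$ of $v_j:=\frac{2}{K}\cot(\frac{\pi j}{2K})e^{-\lambda_j t}$ to telescope the alternating series and bound the whole sum by its first term $v_1\le \frac{4}{\pi}e^{-\lambda_1 t}$, with no loss at all. If you keep a general starting point $z$ (which the application does require, and which the paper handles only implicitly by starting from the midpoint), you additionally need to argue that the survival probability is maximised at $z=K/2$, e.g.\ by a reflection coupling, or else carry out a finer joint estimate in $(z,j)$. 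With either of these repairs your proof closes; without them the sharp constant in \eqref{eq:lem_irw} is not reached.
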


\begin{proof}
Denote by $|X_t - Y_t|$ the length of the clockwise interval from $X_t$ to $Y_t$, then $|X_t - Y_t|$ is a random walk in $\llbracket 0, K\rrbracket$ that jumps at rate 2 and is absorbed in $\{0, K\}$. The time when $X$ and $Y$ meet is the time when $|X_t-Y_t|$ reaches 0 or $K$, so we need to estimate at what time a rate 2 random walk is absorbed by $0$ or $K$. The following computations are for a rate $1$ random walk and we will divide time by 2 at the end, they follow the same ideas as \cite[Section 5]{tran_cutoff_2022}.

Consider a SSEP on $\llbracket 1, K-1\rrbracket$ with empty reservoirs at its boundaries $\{0, K\}$, started from $\eta = \un_{K/2}$ (assuming that $K$ is even: if $K$ is odd we can use the bound for $K+1$). Then, let 
\eqs{t^{\star}_\varepsilon = \inf \{t \ge 0 : \E_{\eta}[|\eta|(t)] \le \varepsilon\}.}
By Markov inequality, the probability for the SSEP to not be empty (or the random walk not to have reached $\{0,K\}$) is less than $\varepsilon$ at times greater than $t^{\star}_{\varepsilon}$, so all we need to do is upper-bound this time.

For $t \ge 0$ and $k \in \llbracket 1, K-1\rrbracket$, let
\eqs{u_t(k) = \E_{\eta}[\eta_k(t)].}
Then by Dynkin's formula, $u$ is solution of
\begin{align}
u_0 &= \eta \\
\frac{d u_t}{dt}(k) &= \Delta u_t(k) \quad \forall k,
\end{align}
where $\Delta u(k) = u(k-1) + u(k+1) - 2u(k)$.

Setting for all $k,l \in \llbracket 1, K-1 \rrbracket$, 
\eqs{\phi_l(k) = \sqrt{2} \sin \left(\frac{\pi l k}{K}\right),}
then $(\phi_l)_{1\le l \le K-1}$ is an orthonormal basis for the inner product
\eqs{\langle f, g \rangle = \frac{1}{K} \sum_{k=1}^{K-1} f(k)g(k),}
and for all $l$:
\eqs{\Delta \phi_l = -\lambda_l \phi_l,}
where
\eqs{\lambda_l = 2 \left( 1 - \cos  \frac{\pi l}{K} \right).}
So, if $\eta = \sum_{l=1}^{K-1} c_l \phi_l$, with $c_l = \langle \eta, \phi_l \rangle$,
\eqs{u_t = \sum_{l=1}^{K-1} c_l e^{-\lambda_l t} \phi_l.}
We also have
\eq{eq:esp}{\E_{\eta}[|\eta|(t)] = \sum_{k=1}^{K-1}u_t(k) = K \langle u_t, \un \rangle,}
so if 
$\un = \sum_{l=1}^{K-1} c'_l \phi_l$, with $c'_l = \langle \un, \phi_l \rangle$, 
\eq{eq:somme}{\frac{1}{K}\E_{\eta}[|\eta|(t)] =  \sum_{k=1}^{K-1}c_l c_l' e^{-\lambda_l t}.}

We can show that for all $l$, $c_l = \frac{\sqrt{2}}{K} \sin \frac{\pi l}{2}$, and $c_l' = \frac{\sqrt{2}}{K}\frac{(\sin\frac{\pi l }{2})^2}{\tan \frac{\pi l}{2K}}$, so that
\eqs{c_l c'_l e^{-\lambda_l t} = \frac{2}{K^2}e^{-\lambda_l t} \frac{\sin \frac{\pi l}{2}}{\tan \frac{\pi l}{2K}}.}
Since $\sin \frac{\pi l}{2} = 0$ if $l$ is even, $1$ if $l \equiv 1 [4]$ and $-1$ if $l \equiv 3 [4]$, we can rewrite the sum in \eqref{eq:somme} as
\eqs{\langle u_t, \phi_l \rangle = \sum_{j=0}^{\ent{K/4}-1} (v_{4j + 1} - v_{4j+3}) + v_{K-1} \un_{K \notin 4 \Z},}
where for all odd $l$, $v_l = \frac{2}{K^2} \frac{e^{-\lambda_l t}}{\tan \frac{\pi l}{2K}}$. Notice that for all $l$, $v_l - v_{l+2} \ge 0$, so, setting $v_l = 0$ for $l \ge K$, 
\eqs{\langle u_t, \phi_l \rangle \le \sum_{j=0}^{\ent{K/4}-1} (v_{4j + 1} - v_{4j+3} + v_{4j+3} - v_{4j+5}) + v_{K-1} \un_{K \notin 4 \Z}\le v_1, }
where the last inequality is obtained by distinguishing on $K \equiv 0 [4]$ or $K \equiv 2 [4]$.

So, 
\eqs{\frac{1}{K}\E_{\eta}[|\eta|(t)] \le \frac{2}{K^2} e^{-\lambda_1 t} \frac{1}{\tan \frac{\pi}{2K}},}
which gives that
\eqs{ t_\varepsilon^{\star} \le \frac{1}{\lambda_1}\log \left(\frac{2}{\varepsilon} \frac{1}{K\tan \frac{\pi}{2K}}\right).}

Then, using that $\frac{1}{\lambda_1} = \frac{K^2}{\pi^2} + \mathcal{O}(1)$ and $\tan \frac{\pi}{2K} = \frac{\pi}{2K} + \mathcal{O}(\frac{1}{K^3})$, we get that 
\eqs{t_{\varepsilon}^{\star} \le \frac{K^2}{\pi^2} \left(\log \frac{1}{\varepsilon} + \log\frac{4}{\pi} + o(1)\right). }
To obtain \eqref{eq:lem_irw}, we divide this time by 2.
\end{proof}

We conclude by upper-bounding \eqref{eq:dtv_ssep} thanks to the previous Lemma: 
\begin{align}
    \mathbb{Q}_{\sigma,\zeta}(\exists i \le s, X_t^i \neq Y_t^i) &\le s \sup\limits_{1\le i \le s}\mathbb{Q}_{\sigma,\zeta}(|X_{t'}^i -Y_{t'}^i| \hbox{ has not reached }\{0,K\} \hbox{ before } t) \\
    &\le s \PP(\tau > t)
\end{align}
where $\tau$ was defined in Lemma \ref{lem:tau_rw}. So taking $t \ge \frac{1}{2} t^{\star}_{\varepsilon/s}$, we have $d_{TV} (\bP^K_{\sigma} (\sigma(t) \in \cdot), \pi_{K,s}) \le \varepsilon $.

\section{Absence of negative dependence for the SWT}
\label{subsec:noND}
Fix random vector $X:=(X_1, ..., X_K) \in \{0,1\}^K$, and define for $I\subset \llbracket1,K\rrbracket$ the $\sigma$-algebra  $\mathscr{F}_I:= \sigma(X_k, k\in I)$  of events depending only on the $I$-coordinates of the vector. We also say that an event $A$ is increasing if ($X\leq X'$ and $X\in A)\Rightarrow X'\in A$. Following e.g. \cite{borcea_negative_2009}, we say that a the vector $X$ is \emph{negative dependent} if $\forall I,J \subset \llbracket 1,K \rrbracket, I\cap J = \emptyset$, and for any increasing events $A\in \mathscr{F}_I$ and $B\in \mathscr{F}_J$,
\eq{eq:nd}{ \PP(A \cap B) \le \PP(A) \PP(B).}
Note that a negative dependent vector $X$ is in particular \emph{pairwise negative correlated},  meaning that 
\eq{eq:nc}{\forall k \neq k', \qquad \PP(X_k=1, X_{k'}=1) \le \PP(X_k=1)\PP(X_{k'}=1).}
For a SSEP with or without reservoirs $\sigma(\cdot)$, started from a negative dependant initial state $\sigma$, negative dependence is preserved through time (see \cite[Lemma 12]{salez_universality_2022} and \cite{tran_cutoff_2022}), and in particular \eqref{eq:nc} holds at all times $t$  for $X_k(t) = \sigma_k(t)$. 

\medskip

We now  show that, even though the SWT bears strong resemblance with the SSEP in contact with empty reservoirs, negative dependence is \emph{not} preserved by its dynamics. For a SWT $\xi$, since particles can get trapped, several choices are available to define $X_k$. More precisely, one can consider
\begin{enumerate}
\item either $X_k(t)=\un_{\{\xi_k(t)>\xi_k(0) \}},$ which indicates if there is a particle, be it live or dead, at site $k$.
\item Or $X_k'(t) = \un_{\{\xi_k(t) = 1\}},$ which indicates if there is a live particle at site $i$. Note that in this case, if the SWT is supercritical, \emph{once in the ergodic component}, it behaves as a SSEP, so if it is in a negative dependent state at this point, this will be preserved by the dynamics. 
\end{enumerate}
We now show that in both cases, negative dependence is not preserved by the SWT, by considering deterministic initial states, so that to prove that negative dependence is not preserved, it is enough to show that there is a time at which it does not hold.

\begin{figure}
    \centering
    \begin{subfigure}[t]{0.45\textwidth}
    \vspace{0pt}
        \centering \includegraphics[width=0.546\textwidth]{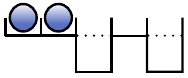}
        \caption{Case of $X_k(t) = \un_{\{\xi_k(t) > \xi_k(0)\}}$: the presence of a particle between the two traps would imply that another particle has been trapped.}\label{fig:nd_1}
    \end{subfigure}
        \hspace{0.08\textwidth} 
    \begin{subfigure}[t]{0.45\textwidth} 
    \vspace{0pt}
        \centering \includegraphics[width=0.754\textwidth]{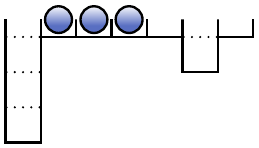}
        \caption{Case of $X_k'(t) = \un_{\{\xi_k(t) =1\}}$. The minimal number of jumps to have $\xi_6 = \xi_7 = 1$ is 10. }\label{fig:nd_2}
    \end{subfigure}
    \caption{Counter-examples of negative dependence}\label{fig:ex_nd}
\end{figure}

\medskip

The first case is quite straightforward, starting from a configuration with an empty zone surrounded by traps. For example , choose $K=5$, and $\xi=(1,1,-1,0,-1)$, as illustrated in Figure \ref{fig:nd_1}. Then, for a particle to have arrived at site $k=4$ in between the traps, the other particle must surely have fallen into one of the two traps, so that for any $t> 0$, 
\begin{align}
\PP_\xi(X_4(t) = 1,  X_3(t) +X_5(t) \geq 1) &= \PP_\xi(X_4(t) = 1) \nonumber \\
&> \PP_\xi(X_4(t) = 1)\PP_\xi(X_3(t)+X_5(t) \geq 1).
\end{align}
and negative dependence is not preserved.

\medskip

For the second case, we choose $X_i'(t) = \un_{\{\xi_i(t) = 1\}}$, and show there is no pairwise negative correlation with the following example.  Set $K=7$, and consider the  initial configuration 
\eq{eq:xiK6}{\xi:=(-3, 1,1,1,0,-1,0),}
represented in Figure \ref{fig:nd_2}, in which there is a big trap at site $1$ (this breaks the periodicity), three particles at sites $2,3,4$, a trap of depth $1$ at site $6$ and two empty sites at sites $5$ and $7$. We show that for $t$ small enough,
\eqs{\PP_{\xi}(\xi_6(t)=\xi_7(t)=1) >\PP_{\xi}(\xi_6(t)=1)\PP_{\xi}(\xi_7(t)=1),}
so that negative correlation is not preserved by the dynamics.

In $\xi$, at least 5 jumps are required to have a live particle at site $6$, by trapping at site $6$ the particle initially at site $4$ (it jumps twice to the right), and then making the particle initially at site $3$ jump thrice to the right.
Similarly, it takes at least $6$ jumps to have a live particle at site $7$.
Lastly, it takes at least 10 jumps to have live particles both at sites $6$ and $7$. 
Define the functions 
\eqs{f(\xi) = \un_{\{\xi_6 = 1\}},\qquad  g(\xi) = \un_{\{\xi_7 = 1\}} \qquad \mbox{ and }\qquad   h(\xi) =  f(\xi) g(\xi).}

Since all our jumps occur at rate $1$,  if $f(\xi)=0$, $\mathscr{L}_7^{\SWT} f(\xi)$ gives the number of possible jumps in $\xi$  leading to a configuration $\xi'$ satisfying $\xi'_6= 1$. More generally, if given $\xi$ 
\begin{itemize}
\item there exists no sequence of at most $k$ jumps leading to a configuration $\xi'$ satisfying $f(\xi') = 1$,
\item there exists a sequence of $k+1$ jumps leading to a configuration $\xi'$ satisfying $f(\xi') = 1$,
\end{itemize}
then for all $l \le k$, $(\mathscr{L}_7^{\SWT})^l f(\xi) = 0$ and $(\mathscr{L}_7^{\SWT})^{k+1} f(\xi)$ is the number of sequences of $k+1$ jumps leading to $f(\xi') = 1$. The same goes for $g$ and $h$.

We thus obtain
\begin{align}
    \forall n < 5, (\mathscr{L}_7^{\SWT})^n f (\xi) &= 0 \nonumber\\
    \forall n < 6, (\mathscr{L}_7^{\SWT})^n g (\xi) &= 0 \nonumber\\
    \forall n < 10, (\mathscr{L}_7^{\SWT})^n h (\xi) &= 0.
\end{align}

Define the semi-group $P_t = e^{t \mathscr{L}_7^{\SWT}} = \sum_{n\ge 0} \frac{t^n}{n!}(\mathscr{L}_7^{\SWT})^n $ associated with $\mathscr{L}_7^{\SWT}$, and note that the operator norm $\|\mathscr{L}_K^{\SWT}\|$  of $\mathscr{L}_K^{\SWT}$ is less than $14$.
Then we have
\begin{align}
    P_t f(\xi) &= \frac{t^{5}}{5!} + \sum_{n\ge 6} \frac{t^n}{n!}(\mathscr{L}_7^{\SWT})^n f(\xi) \nonumber \\
    P_t g(\xi) &= \frac{t^6}{6!} + \sum_{n\ge 7} \frac{t^n}{n!}(\mathscr{L}_7^{\SWT})^n g(\xi) \nonumber \\
    P_t h(\xi) &= \frac{t^{10}}{10!} + \sum_{n\ge 11} \frac{t^n}{n!}(\mathscr{L}_7^{\SWT})^n h(\xi)
\end{align}
In particular, for $t\ll 1/14$ small enough, we have $P_t h(\xi)=\mathcal{O}(t^{10}) > P_t f(\xi) P_t g(\xi)=\mathcal{O}(t^{11}) $, which proves that negative dependence is not preserved by the SWT.

\medskip

Notably, since both the SSEP and the SSEP with reservoirs preserve negative dependence, this means that the only moments where the SWT breaks the preservation of negative dependence is the moment when a trap disappears. Indeed, aside from these moments, in between traps, one observes exactly a SSEP in contact with reservoirs on each ergodic segment delimited by the traps.

\bibliographystyle{plain}
\bibliography{biblio.bib}

\end{document}